\documentclass[twoside,11pt,reqno]{amsart}
\usepackage{amsmath,amssymb,amscd,mathrsfs,amscd, todonotes,appendix}
\usepackage{graphics,verbatim}
\usepackage{todonotes}
\usepackage{enumitem}
\usepackage{hyperref}
\usepackage{setspace} 

\oddsidemargin .2in \evensidemargin .2in \textwidth 6in



\newcommand{\losemi}{{\otimes \kern -.78em \ltimes}}
\newcommand{\rosemi}{{\otimes \kern -.78em \rtimes}}

\newcommand{\Hom}{\ensuremath{\operatorname{Hom}}}

\newcommand{\sgn}{\operatorname{sgn}}



\newcommand{\ga}{\gamma}

\newcommand{\si}{\sigma}


\newcommand{\la}{\lambda}





\newcommand{\St}{\operatorname{St}}


\newcommand{\soc}{\operatorname{soc}}

\newcommand{\fm}{{\mathcal F}_m}

\newcommand{\fn}{{\mathcal F}_n}
\newcommand{\gn}{{\mathcal G}_n}
\newcommand{\gm}{{\mathcal G}_m}

\newcommand{\fmn}{{\mathcal F}^m_n}
\newcommand{\fdn}{{\mathcal F}^d_n}
\newcommand{\fdm}{{\mathcal F}^d_m}
\newcommand{\gmn}{{\mathcal G}^m_n}
\newcommand{\gdn}{{\mathcal G}^d_n}
\newcommand{\gdm}{{\mathcal G}^d_m}

\makeatletter
\newcommand{\leqnomode}{\tagsleft@true}
\newcommand{\reqnomode}{\tagsleft@false}
\makeatother





\newtheorem{theorem}{Theorem}[subsection]

\makeatletter\let\c@fact\c@theorem\makeatother

\makeatletter\let\c@note\c@theorem\makeatother

\newtheorem{lemma}{Lemma}[subsection]
\makeatletter\let\c@lemma\c@theorem\makeatother

\makeatletter\let\c@lemma\c@theorem\makeatother

\makeatletter\let\c@alg\c@theorem\makeatother

\newtheorem{prop}{Proposition}[subsection]
\makeatletter\let\c@prop\c@theorem\makeatother

\makeatletter\let\c@conj\c@theorem\makeatother

\newtheorem{cor}{Corollary}[subsection]
\makeatletter\let\c@cor\c@theorem\makeatother

\newtheorem{defn}{Definition}[subsection]
\makeatletter\let\c@defn\c@theorem\makeatother

\theoremstyle{definition}

\newtheorem{remark}{Remark}[subsection]
\makeatletter\let\c@remark\c@theorem\makeatother

\makeatletter\let\c@example\c@theorem\makeatother
\numberwithin{equation}{subsection}

%
%
\usepackage[capitalise]{cleveref}
\crefname{theorem}{Theorem}{Theorems}
\crefname{fact}{Fact}{Facts}
\crefname{note}{Note}{Notes}
\crefname{lemma}{Lemma}{Lemmas}
\crefname{alg}{Algorithm}{Algorithms}
\crefname{remark}{Remark}{Remarks}
\crefname{example}{Example}{Examples}
\crefname{prop}{Proposition}{Propositions}
\crefname{conj}{Conjecture}{Conjectures}
\crefname{cor}{Corollary}{Corollaries}
\crefname{defn}{Definition}{Definitions}
\crefname{equation}{\!\!}{\!\!} 


\newcounter{listequation}


\begin{document}

\title{Restricting Rational Modules to Frobenius Kernels}

\author{\sc Christopher P. Bendel}
\address
{Department of Mathematics, Statistics and Computer Science\\
University of
Wisconsin-Stout \\
Menomonie\\ WI~54751, USA}
\thanks{Research of the first author was supported in part by an AMS-Simons Research Enhancement Grant for PUI Faculty}
\email{bendelc@uwstout.edu}

\author{\sc Daniel K. Nakano}
\address
{Department of Mathematics\\ University of Georgia \\
Athens\\ GA~30602, USA}
\thanks{Research of the second author was supported in part by
NSF grant DMS-2101941}
\email{nakano@math.uga.edu}

\author{\sc Cornelius Pillen}
\address{Department of Mathematics and Statistics \\ University
of South
Alabama\\
Mobile\\ AL~36688, USA}
\email{pillen@southalabama.edu}

\author{Paul Sobaje}
\address{Department of Mathematical Sciences \\
          Georgia Southern University\\
          Statesboro, GA~30458, USA}
\email{psobaje@georgiasouthern.edu}

\begin{abstract}
Let $G$ be a connected reductive group over an algebraically closed field of characteristic $p>0$.  Given an indecomposable G-module $M$, one can ask when it remains indecomposable upon restriction to the Frobenius kernel $G_r$, and when its $G_r$-socle is simple (the latter being a strictly stronger condition than the former).  In this paper, we investigate these questions for $G$ having an irreducible root system of type A.  Using Schur functors and inverse Schur functors as our primary tools, we develop new methods of attacking these problems, and in the process obtain new results about classes of Weyl modules, induced modules, and tilting modules that remain indecomposable over $G_r$.
\end{abstract}

\maketitle

\section{Introduction}

\subsection{} Let $G$ be a reductive algebraic group scheme over a field of characteristic $p>0$, and $G_r$ be a the scheme theoretic kernel of the $r$th iteration of the Frobenius map. For $r=1$, 
$G_{1}$-representations correspond to restricted representations of the restricted Lie algebra ${\mathfrak g}=\text{Lie }G$. Let $\text{Mod}(G)$ be the category of rational representations for $G$, 
and $\text{Mod}(G_{r})$ be the category of representations for $G_{r}$. In this paper, we will be interested in the following questions: 
\begin{itemize} 
\item[(i)] Given an indecomposable module $M$ in $\text{Mod}(G)$, when is $M$ indecomposable upon restriction to $G_{r}$? 
\item[(ii)] Given $M\in \text{Mod}(G)$, when is the $G_{r}$-socle, $\text{soc}_{G_{r}}M$,  of $M$ simple?
\end{itemize} 

When $M=L(\lambda)$, $\lambda\in X_{+}$ (the set of dominant weights), is a simple $G$-module, then the answers to (i) and (ii) are given by work of Curtis and Steinberg (see \cite[II Chapter 3]{rags}). For arbitrary rational $G$-modules, 
these questions can be quite difficult to answer. The goal of this paper is to indicate that answers to (i) and (ii) are tractable for various classes of modules such as induced (costandard), Weyl (standard), and indecomposable tilting $G$-modules. 

\subsection{} We will make further restrictions in this paper by working with reductive groups $G$ with underlying root system of type $A$. The main reason for this is the fundamental connection between 
polynomial representations of ${GL}_{n}(k)$ (for a field $k$) and the representations of the symmetric group $\Sigma_{d}$. In this situation, the polynomial representations of $GL_n$ are equivalent to the modules for the 
Schur algebras $S(n,d)$ where $d\geq 0$. 

Further connections have been established by Doty, Nakano, and Peters \cite{DNP96}, where they developed a polynomial representation theory for $G_rT$ ($T$ being a maximal torus) and infinitesimal Schur algebras $S(n,d)_{r}$. When $n\geq d$, one has a Schur functor in both situations to modules for the group algebra of the symmetric group $k\Sigma_{d}$. We capitalize on this setting with the Schur and inverse Schur functor, in addition to facts about symmetric group representations,  to establish that for $p\geq 3$ and $n\geq d$, the induced (costandard), Weyl (standard) and indecomposable tilting $G$-modules all remain indecomposable upon restriction to $G_{r}$ (see Corollary~\ref{C:GrIndec}). 

\subsection{}\label{SS:n<d} The paper will also focus on representations for the Schur algebra $S(n,d)$ when $n<d$. In particular, we will be interested in cases when we 
can compute the $G$-socle of the indecomposable tilting module $T(\lambda)$. We note that for $n<d$, one can use the Andersen-Haboush isomorphisms (cf. \cite[II Prop. 3.19, E.9 Lemma]{rags}) to demonstrate that certain tilting and Weyl modules are not indecomposable upon restriction to $G_rT$.

A theory of functors arising from idempotents for a finite-dimensional algebra is described in \cite{DEN}. For positive integers $m \geq n$, there exists an idempotent functor from $\text{Mod}(S(m,d))$ to $\text{Mod}(S(n,d))$, first defined by Donkin, and an inverse Schur functor ${\mathcal G}_{n}^{m}(-)$. 
 This functor is 
especially useful in the case when $m\geq d >n$. Knowledge about the modules produced under the image of this inverse Schur functor are key to understanding the polynomial representations for $\text{GL}_{n}$. 
Results about the image of ${\mathcal G}_{n}^{m}(-)$ on simple and induced modules are discussed (cf. Section~\ref{S:Gmn} and the Appendix). 

\subsection{} A specific case of question (ii) involves Donkin's Tilting Module Conjecture (TMC). The TMC is equivalent to the statement that, for a $p^r$-restricted weight $\la$, the tilting module 
$T(2(p^r-1)\rho + w_0\lambda )$ is indecomposable as a $G_r$-module, where $\rho$ denotes the Weyl weight and $w_0$ the longest Weyl group element. The reader is referred to \cite[Sections 1.1,  2.2]{BNPS23} for a detailed discussion about the history of the TMC and its related conjectures. In 
\cite[Theorem 1.2.1]{BNPS22}, the authors have shown that there are counterexamples to the TMC for all irreducible root systems except for Types $A_{n}$ and $B_{2}$. 
For $B_{2}$ and $A_{n}$ for $n\leq 3$, the TMC holds for all primes \cite[Theorem 1.1.1]{BNPS21}. The authors strongly believe that the TMC should hold for Type $A_{n}$ in general for all primes. 

From the results described in Section~\ref{SS:n<d}, one can consider the functor ${\mathcal G}_{n}^{m}(-)$ from $\text{Mod}(S(n,d))$ to $\text{Mod}(S(m,d))$. Our investigation 
led us to formulate a natural statement involving certain composition factor multiplicities (using the Mullineux map) of the image of this functor on simple modules for $\text{Mod}(S(n,d))$. For a precise formulation the reader is referred to Theorem~\ref{T:DequivG} and Corollary~\ref{C:TMCequivG}. The striking fact about this statement is that it is equivalent to the TMC for Type $A_n$ and provides more evidence that the TMC will hold for all primes in this case. 

\subsection{Notation}\label{S:notation} In this paper we will generally follow the standard conventions in \cite{rags}.  Let $k$ be an algebraically closed field of characterisic $p >0$, and let $G$ be a connected reductive algebraic group scheme over $k$ that is defined over the prime subfield ${\mathbb F}_{p}$.  Most of the results in the paper hold for an arbitrary prime, but there are some results in Sections 2 and 3 where we need to assume $p \geq 3$ (due to the nature of certain representations for a symmetric group at the  prime 2).
Usually $G$ will be either the general linear group $GL_n$ or the special linear group $SL_n$, but we work in general for now with specifications made at various points in the paper.   Given a maximal torus $T$, let $\Phi$ be the root system associated with $(G,T)$, and $\mathbb E$ be the Euclidean space associated with $\Phi$ with the inner product on ${\mathbb E} $ denoted by $\langle\ , \ \rangle$.   
Moreover, let $\Phi^+$ the set of positive roots and $\Phi^{-}$ be the corresponding set of negative roots. 
The set of simple roots determined by $\Phi^+$ is $\Delta=\{\alpha_1,\dots,\alpha_{l}\}$. 
Set $\rho$ to be the half-sum of all simple roots and $\alpha_{0}$ to be the highest short root.
Let $\alpha^{\vee}$ to be the coroot corresponding to $\alpha\in \Phi$. 

Let $B$ be the Borel subgroup given by the set of negative roots and let $U$ be
the unipotent radical of $B$. The Weyl group 
associated to $\Phi$ will be denoted by $W$, and let $w_0$ denote the longest word of $W$.
Let $X:=X(T)$ be the integral weight lattice spanned by the fundamental weights $\{\omega_1,\dots,\omega_l\}$, $X_{+}$ be the dominant weights for $G$, and $X_{r}$ be the $p^{r}$-restricted weights. 

Let $F$ denote the Frobenius morphism on $G$. The $r$th Frobenius kernel will be denoted by $G_{r}$, and its graded version by $G_{r}T$.  For $\lambda\in X_{+}$, there are four fundamental families of $G$-modules (each having highest weight $\lambda$): $L(\lambda)$ (simple), $\nabla(\lambda)$ (costandard/induced), $\Delta(\lambda)$ (standard/Weyl), and $T(\lambda)$ (indecomposable tilting). For $\lambda\in X_{r}$, let $Q_{r}(\lambda)$ denote the $G_{r}$-projective cover (equivalently, injective hull) of $L(\lambda)$ as a $G_{r}$-module. For $\lambda\in X$, if $\widehat{L}_{r}(\lambda)$ is the corresponding simple $G_{r}T$-module, let $\widehat{Q}_{r}(\lambda)$ denote the $G_{r}T$-projective cover (equivalently, injective hull) of $\widehat{L}_{r}(\lambda)$.   The $r$th Steinberg module is defined as $\text{St}_r = L((p^r-1)\rho)$.  For 
$\lambda\in X_{r}$, set $\hat{\lambda}=2(p^{r}-1)\rho+w_{0}\lambda$. 

Let $\tau:G \rightarrow G$ be the Chevalley antiautomorphism of $G$ that is the identity morphism when restricted to $T$ (see \cite[II 1.16]{rags}).  Given a finite dimensional $G$-module $M$ over $k$, the module $^{\tau}M$ is $M^*$ (the ordinary $k$-linear dual of $M$) as a $k$-vector space, with action $g.f(m)=f(\tau(g).m)$.  This defines a functor from $\text{Mod}(G)$ to $\text{Mod}(G)$ that preserves the character of $M$.  In particular, it is the identity functor on all simple and tilting modules. 


\subsection{Acknowledgements} The authors acknowledge Rudolf Tange for useful discussions pertaining Premet's work \cite{Pr}, and to Karin Erdmann for providing references to results by Donkin and De Visscher \cite{DDV}.


\section{Schur Algebras and Functors}

\subsection{Schur Algebras} For positive integers $n, d$, consider the associated (finite-dimensional) Schur algebra $S(n,d) := \text{End}_{\Sigma_{d}}(V^{\otimes d})$, where $\Sigma_{d}$ is the symmetric group on $d$ letters and $V$ is the 
$n$-dimensional natural representation for $GL_{n}$. The category of $S(n,d)$-modules is equivalent to the category of polynomial representations of $GL_n$ of degree $d$.  Associated to $S(n,d)$ is the weight poset $\Lambda^+(n,d)$ of dominant partitions of $d$ into at most $n$ parts.   For each $\la \in \Lambda^+(n,d)$, one has the simple module $L(\la)$, Weyl module $\Delta(\la)$, costandard module $\nabla(\la)$, and indecomposable tilting module $T(\la)$, following our earlier conventions.  Let $P(\la)$ denote the projective cover of $L(\la)$ and $I(\la)$ denote the injective hull of $L(\la)$ as $S(n,d)$-modules.

One can also study the polynomial representation theory of $G_rT$ for $G$ being $GL_n$.
In the 1990s, Doty, Nakano and Peters \cite{DNP96} created a module category for a monoid scheme $M_{r}D$ 
whose representation theory is equivalent to a finite-dimensional 
algebra $S(n,d)_{r}$ called an {\em infinitesimal Schur algebra}.  Here, $M=M_{n}$ is the reductive monoid of 
$n\times n$ matrices, with subscheme $D$ of diagonal matrices. The monoid scheme $M_{r}D$ is a natural object of study 
if viewed as an object in the following commutative diagram 
of $k$-functors. 
$$
\CD 
T @>>> G_{r}T @>>> G \\
@VVV @VVV @VVV \\
D @>>> M_{r}D @>>> M.
\endCD
$$

We state the following result from \cite[Prop. 2.1]{DNP97} that provides the connection between restricting 
modules from $M_rD$ to $G_rT$. 

\begin{prop} \label{P:MrDtoGrT} Let $N,\ N^{\prime}$ be $M_{r}D$-modules.
\begin{itemize}
\item[$(a)$] Any decomposition $N\cong N_{1}\oplus N_{2}$ in 
$\operatorname{Mod}(G_{r}T)$ is a decomposition in 
$\operatorname{Mod}(M_{r}D)$. 
\item[$(b)$] If $N$ is an indecomposable $M_{r}D$-module, then 
$N$ remains indecomposable upon restriction to $G_{r}T$. 
\item[$(c)$] If $N\cong N^{\prime}$ in 
$\operatorname{Mod}(G_{r}T)$, then $N\cong N^{\prime}$ 
in $\operatorname{Mod}(M_{r}D)$. 
\end{itemize}
\end{prop}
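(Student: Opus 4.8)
The plan is to deduce all three parts from one property of the restriction functor $\operatorname{Mod}(M_{r}D)\to\operatorname{Mod}(G_{r}T)$: that it is fully faithful and reflects submodules. The single input needed from the construction of $M_{r}D$ in \cite{DNP96} is that the comorphism $\iota\colon k[M_{r}D]\to k[G_{r}T]$ dual to $G_{r}T\to M_{r}D$ is injective; this is built into that construction — where $k[M_{r}D]$ is realized as a subcoalgebra of $k[G_{r}T]$, playing the role for $G_{r}T$ that the polynomial algebra $k[M_{n}]$ plays for $GL_{n}$ — and it reflects the density of $GL_{n}$ in $M_{n}$ and of $T$ in $D$. (One even has $k[G_{r}T]=k[M_{r}D][d^{-1}]$ for the grouplike determinant $d$, but only injectivity of $\iota$ is used.) An $M_{r}D$-module is a $k[M_{r}D]$-comodule, a $G_{r}T$-module a $k[G_{r}T]$-comodule, and restriction pushes the coaction forward along $\iota$ without changing the underlying vector space or any linear maps; in particular it is faithful.

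The crux is an elementary fact about $k$-vector spaces: if $W\subseteq V$ and $B\subseteq A$, then inside $V\otimes_{k}A$ one has $(W\otimes A)\cap(V\otimes B)=W\otimes B$, as one sees by choosing splittings $V=W\oplus W'$, $A=B\oplus B'$ and comparing the four summands of $V\otimes A$. Apply this with $B=k[M_{r}D]\subseteq A=k[G_{r}T]$: let $N$ be an $M_{r}D$-module with coaction $\rho\colon N\to N\otimes k[M_{r}D]$, and let $W\subseteq N$ be a $G_{r}T$-submodule of the restriction. Then the composite $N\xrightarrow{\rho}N\otimes k[M_{r}D]\hookrightarrow N\otimes k[G_{r}T]$ carries $W$ into $W\otimes k[G_{r}T]$, while it visibly carries $W$ into $N\otimes k[M_{r}D]$, so the identity above gives $\rho(W)\subseteq W\otimes k[M_{r}D]$; thus $W$ is an $M_{r}D$-submodule, and the $M_{r}D$- and $G_{r}T$-submodule lattices of $N$ coincide. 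Part $(a)$ follows at once: in a $G_{r}T$-decomposition $N=N_{1}\oplus N_{2}$ the summands $N_{1},N_{2}$ are $G_{r}T$-submodules, hence $M_{r}D$-submodules, so the vector-space direct sum is already a decomposition in $\operatorname{Mod}(M_{r}D)$; and $(b)$ is the contrapositive of $(a)$.

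For $(c)$ I would upgrade reflection of submodules to fullness of restriction. Given a $G_{r}T$-homomorphism $f\colon N\to N'$ between $M_{r}D$-modules, its graph $\Gamma_{f}\subseteq N\oplus N'$ is a $G_{r}T$-submodule of the restriction of $N\oplus N'$, hence by the previous step an $M_{r}D$-submodule. The first projection restricts to an $M_{r}D$-homomorphism $\Gamma_{f}\to N$ that is bijective on underlying spaces, hence an $M_{r}D$-isomorphism (the inverse of a bijective comodule map is again a comodule map), and composing its inverse with the second projection exhibits $f$ as $M_{r}D$-linear. Therefore $\operatorname{Hom}_{M_{r}D}(N,N')=\operatorname{Hom}_{G_{r}T}(N,N')$, so a $G_{r}T$-isomorphism $N\to N'$ and its inverse are both $M_{r}D$-maps, which is $(c)$. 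The only real obstacle is the structural input of the first paragraph — pinning down that $G_{r}T$ embeds into $M_{r}D$ at the level of coordinate coalgebras; once that is in hand, the remainder is formal comodule linear algebra and needs no finiteness hypothesis on $N$ or $N'$.
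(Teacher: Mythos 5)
The paper does not supply its own proof of this proposition; it is quoted verbatim from \cite[Prop.\ 2.1]{DNP97}, so there is no in-paper argument to compare against. That said, your proof is correct, and the comodule-theoretic route you take is the natural one. The one place where you are a bit quick is the opening claim that injectivity of $k[M_rD]\to k[G_rT]$ is ``built into the construction'': since $G_rT\subseteq G$ and $M_rD\subseteq M$ are \emph{closed} subschemes, $k[G_rT]$ and $k[M_rD]$ arise as quotients $k[G]/I$ and $k[M]/J$, and injectivity of the induced map $k[M]/J\to k[G]/I$ amounts to $I\cap k[M]=J$. This does hold, and the cleanest way to see it is exactly the observation you make parenthetically: $G_rT$ is the principal open subscheme of $M_rD$ where the determinant $d$ is invertible, so $k[G_rT]\cong k[M_rD][d^{-1}]$, and one checks that $d$ is a nonzerodivisor in $k[M_rD]\cong k[x_{ij}]/(x_{ij}^{p^r}:i\neq j)$ (its $p^r$-th power is $\prod_i x_{ii}^{p^r}$, which is regular). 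With the coalgebra embedding in hand, your linear-algebra identity $(W\otimes A)\cap(V\otimes B)=W\otimes B$ correctly shows that restriction reflects submodules, which gives $(a)$ and $(b)$ immediately; and the graph trick for $(c)$ (using that a bijective comodule map has a comodule inverse) is standard and sound. So the argument is complete; I would only tighten the justification of the injectivity rather than attribute it to the construction itself.
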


\subsection{Relating Schur algebras}\label{S:Lowering}  For this section and the following section, we refer the reader to the work of Doty, Erdmann and Nakano \cite{DEN} where they describe the procedure 
of producing functors using idempotents and the general theory of these functors. 

Given positive integers $m \geq n$, there exists an idempotent $f \in S(m,d)$ such that $fS(m,d)f \cong S(n,d)$.  Multiplication by $f$ defines an exact functor $\fmn$ from $\text{Mod}(S(m,d))$ 
to  $\text{Mod}(S(n,d))$.   More precisely, for any $S(m,d)$-module $M$,
\begin{equation}\label{E:Fequiv}
\fmn(M) := fM \cong \Hom_{S(m,d)}(S(m,d)f,M) \cong fS(m,d)\otimes_{S(m,d)}M.
\end{equation}
Let $W$ (respectively, $V$) denote the natural $m$-dimensional (respectively, $n$-dimensional) module for $S(m,d)$ (respectively, $S(n,d)$).  That is, the natural $GL_m$-module (respectively $GL_n$-module).  Note that $f$ may (and will) be chosen so that $fW^{\otimes d} \cong V^{\otimes d}$.  

The behavior of the aforementioned highest weight modules under this functor is understood (see for example \cite{E}):

\begin{lemma}\label{L:fstandard} Given $\la \in \Lambda^+(m,d)$, let $\bar{M}(\la)$ denote either $\bar{L}(\la)$, $\bar{\Delta}(\la)$, $\bar{\nabla}(\la)$, or $\bar{T}(\la)$ over $S(m,d)$.  If $\la \in \Lambda^+(n,d)$, then let $M(\la)$ denote the corresponding module over $S(n,d)$.  Then 
$$
f\bar{M}(\la) = 
\begin{cases}
M(\la) &\text{ if } \la \text{ has at most } n \text{ parts, i.e., if } \la \in \Lambda^+(n,d),\\
0 &\text{ else.}
\end{cases}
$$
\end{lemma}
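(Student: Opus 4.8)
The plan is to exploit the idempotent description of $\fmn$ from \eqref{E:Fequiv} together with the highest-weight structure of the relevant $S(m,d)$-modules. First, I would recall the key combinatorial fact about the idempotent $f$: since $f$ was chosen so that $fW^{\otimes d} \cong V^{\otimes d}$, the functor $\fmn$ is, up to the standard identification of Schur algebra modules with polynomial $GL$-modules, nothing but the operation of sending a polynomial $GL_m$-module $M$ to the $f$-weight space $fM$, which picks out exactly those weight spaces $M_\mu$ with $\mu \in \Lambda^+(n,d)$-supported weights, i.e. weights of $M$ involving only the first $n$ coordinates. In more representation-theoretic terms, $\fmn$ corresponds to the classical "truncation to $GL_n$" functor. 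The heart of the argument is then the well-known fact (see \cite{E} or \cite{DEN}) that for a quasi-hereditary algebra equipped with such an idempotent, with $fS(m,d)f \cong S(n,d)$ and with the weight posets compatibly identified, the functor $M \mapsto fM$ sends standard objects to standard objects (or to $0$) and likewise for costandard, simple, and tilting objects.

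The steps I would carry out are as follows. Step one: handle the Weyl (standard) case $\bar{M}(\la)=\bar\Delta(\la)$. Using the Weyl module realization via divided powers or via the kernel/image construction, one sees directly that $f\bar\Delta(\la)$ is spanned by the images of the weight vectors whose weights lie in $\Lambda^+(n,d)$; when $\la$ has at most $n$ parts this yields precisely a highest weight $\la$ submodule-quotient structure isomorphic to $\Delta(\la)$ over $S(n,d)$, and when $\la$ has more than $n$ parts the highest weight vector itself is killed and an easy weight argument forces $f\bar\Delta(\la)=0$. Step two: the costandard case $\bar\nabla(\la)$ follows by applying the duality $^\tau(-)$ (or contravariant duality on $S(m,d)$), which commutes with $\fmn$ up to the analogous duality on $S(n,d)$, and then invoking Step one. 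Step three: the simple case $\bar L(\la)$ — since $\bar L(\la)$ is the head of $\bar\Delta(\la)$, exactness of $\fmn$ gives that $f\bar L(\la)$ is a quotient of $f\bar\Delta(\la)$; when $\la\in\Lambda^+(n,d)$ a highest-weight/self-duality argument identifies this quotient as $L(\la)$, and when $\la\notin\Lambda^+(n,d)$ one argues (again by weights, using that every composition factor of $\bar L(\la)$ other than the top is $\bar L(\mu)$ with $\mu < \la$, or more simply that $\bar L(\la)$ has no weights in $\Lambda^+(n,d)$ at all when $\la$ has more than $n$ parts) that $f\bar L(\la)=0$. Step four: the tilting case $\bar T(\la)$ — write $\bar T(\la)$ as a module with both a $\bar\Delta$-filtration and a $\bar\nabla$-filtration; exactness of $\fmn$ plus Steps one and two show $f\bar T(\la)$ has a $\Delta$-filtration and a $\nabla$-filtration over $S(n,d)$, hence is tilting, and a check of highest weights (the subquotient $\bar\Delta(\la)$ in the top of the $\bar\Delta$-filtration maps to $\Delta(\la)$, contributing the highest weight $\la$, with all other $\bar\Delta(\mu)$ having $\mu\leq\la$) together with indecomposability identifies it as $T(\la)$; when $\la$ has more than $n$ parts, every $\mu$ appearing in the filtration either has more than $n$ parts or is dominated by $\la$ and a more careful argument — or simply the observation that $\bar T(\la)$ has no $\Lambda^+(n,d)$-weights in that case — gives $f\bar T(\la)=0$.

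The step I expect to be the main obstacle is the vanishing half when $\la$ has more than $n$ parts, specifically for the tilting module $\bar T(\la)$: unlike $\bar\Delta(\la)$, whose weights are all $\leq \la$ and hence (since $\la$ has a nonzero coordinate beyond the $n$-th) could a priori still have some weight supported on the first $n$ coordinates, one must actually use that a partition $\mu$ of $d$ with $\mu \leq \la$ in the dominance order, where $\la$ has more than $n$ parts — no wait, that is not automatic. The correct and cleanest route is to note that if $\bar T(\la)$ had a nonzero weight $\mu$ with $\mu\in\Lambda^+(n,d)$, then by the linkage/ordering of a good filtration some $\bar\nabla(\nu)$ in the filtration would have $\nu \geq \mu$ with $\nu$ a partition of $d$ having at most... — this is where care is needed. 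I would resolve it by appealing to the general idempotent-functor formalism in \cite{DEN, E}: there it is shown abstractly that $fA f \cong S(n,d)$ with the truncation to the poset ideal $\Lambda^+(n,d)$, and that the four classes of modules behave as stated, precisely because $f$ corresponds to a poset ideal (equivalently, a saturated set of weights closed downward in the relevant sense for this particular idempotent), so no delicate combinatorics of dominance order is actually required once one invokes that framework. Thus the real content is to verify we are in the situation covered by that formalism — namely that the chosen $f$ with $fW^{\otimes d}\cong V^{\otimes d}$ does cut out the ideal $\Lambda^+(n,d)$ — and then cite it.
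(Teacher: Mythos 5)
The paper does not actually prove this lemma: it is stated as known, with the text immediately preceding it saying ``The behavior of the aforementioned highest weight modules under this functor is understood (see for example \cite{E})''. Your proposal, after some hedging, arrives at the same resolution — verify that $f$ cuts out the weight set $\Lambda^+(n,d)$ and invoke the idempotent truncation formalism of \cite{E, DEN} for quasi-hereditary algebras — so the approach matches the paper's.

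Two remarks on the sketch you give along the way. First, your worry about the vanishing half (``no wait, that is not automatic'') is unfounded: the needed combinatorial fact is elementary and is in fact exactly Lemma \ref{L:parts} of the paper. If $\mu,\la$ are partitions of $d$ with $\mu\le\la$ in dominance order and $\mu$ has at most $n$ parts, then $\sum_{i\le n}\la_i \ge \sum_{i\le n}\mu_i = d$, forcing $\la_{n+1}=\cdots=\la_m=0$; so $\Lambda^+(n,d)$ is up-closed (a co-ideal) in $\Lambda^+(m,d)$. Since every weight of $\bar\Delta(\la)$, $\bar\nabla(\la)$, $\bar L(\la)$, or $\bar T(\la)$ is $\le\la$, when $\la$ has more than $n$ parts none of these modules has any weight supported on the first $n$ coordinates and all four vanish under $f$; no delicate good-filtration argument for $\bar T(\la)$ is needed. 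This co-ideal property is also precisely the hypothesis that makes the general quasi-hereditary truncation theory applicable, which is the ``verification'' you correctly identify as the real content. Second, in your Step~4 the argument that $f\bar T(\la)$ is a tilting module with a single $\Delta(\la)$ factor at the top shows that $T(\la)$ is a direct summand of $f\bar T(\la)$, but does not by itself rule out extra summands $T(\mu)$ with $\mu<\la$; indecomposability is again part of what the cited formalism delivers, so citing it is the right move rather than an optional shortcut.
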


The functor $\fmn$ admits a right adjoint $\gmn$ from $S(n,d)$-modules to $S(m,d)$-modules defined by 
$$
\gmn(M) := \Hom_{S(n,d)}(fS(m,d),M).
$$
The functor $\gmn$ is also a right inverse to $\fmn$ (cf. \cite{A, DEN}). That is, for an $S(n,d)$-module $M$, $\fmn(\gmn(M)) \cong M$.  In particular, given a simple $S(n,d)$-module $L(\la)$, 
$$\fmn(\gmn(L(\la)) \cong L(\la).$$ This implies that $\bar{L}(\la)$ appears precisely once in a composition series for $\gmn(L(\la))$ and, for any other composition factor $\bar{L}(\mu)$ of $\gmn(L(\la))$, $\mu$ must have {\em more} than $n$ parts. The structure of $\gmn(L(\la))$ will be discussed further in Section \ref{S:Gmn}.

\subsection{Symmetric groups and the Schur functor} \label{S:sym} For $m \geq d$, there is a well-known association between $S(m,d)$-modules and $k\Sigma_d$-modules given by the (exact) Schur functor $\fm$.  As above, there is an idempotent $e \in S(m,d)$ with $eS(m,d)e \cong k\Sigma_d$ and $S(m,d)e \cong W^{\otimes d}$, where $W$ denotes the natural $GL_m$-module as above. The functor $\fm$ is given by the idempotent, that is, $\fm(M) = eM$ for an $S(m,d)$-module $M$.   Moreover, the idempotent $e$ may be chosen so that $e \in S(m,d)_r$ and $eS(m,d)_re \cong k\Sigma_d$. Thus, one may also consider the Schur functor $\fm$ from $\text{Mod}(S(m,d)_r)$ to $\text{Mod}(k\Sigma_d)$.

For a partition $\la \in \Lambda^+(m,d)$, let $\la'$ denote the transpose (or conjugate) partition.  A partition $\la$ is said to be $p$-regular if it does not have $p$ or more consecutive non-zero terms that are equal, and is otherwise known as $p$-singular. A partition $\la = (\la_1, \la_2, \dots, \la_m)$ is said to be $p$-restricted if $0 \leq \la_i - \la_{i+1} < p$ for $1 \leq i \leq m-1$ and $0 \leq \la_m < p$.   Observe that $\la$ is $p$-restricted if and only if $\la'$ is $p$-regular.  

The action of $\fm$ on simple modules is known.  Quite generally, for a finite-dimensional algebra $A$ and idempotent $e\in A$, if $L$ is a simple $A$-module, then $eL$ is either $0$ or a simple $eAe$-module. Moreover, the non-zero modules of the form $eL$ where $L$ is a simple $A$-module form a 
complete set of non-isomorphic simple $eAe$-modules.  Let $\la \in \Lambda^{+}(m,d)$,  then $eL(\la) \neq 0$ if and only if $\la$ is $p$-restricted.  For $\la$ being $p$-restricted,  set $D_{\lambda}=eL(\lambda)$.  Then the set 
$$\{D_{\lambda}\mid \text{$\lambda\in\Lambda^+(m,d)$ is $p$-restricted}\}$$ 
is a complete set of non-isomorphic simple $k\Sigma_d$-modules.

 For $p \geq 3,$ let $\text{sgn}$ denote the one dimensional sign representation of $k\Sigma_d$. Then another complete set of non-isomorphic simple $k\Sigma_d$-modules is given by 
$$\{D^{\lambda}\mid \text{$\lambda\in\Lambda^+(m,d)$ is $p$-regular}\},$$
where 
$$D^{\lambda}=D_{\lambda^{\prime}}\otimes \text{sgn}.$$ 

For $\lambda\in \Lambda^{+}(m,d)$, the Specht module over $k\Sigma_d$ is denoted by $S^{\lambda}$. 
Let $M^{\lambda}=\text{ind}_{k\Sigma^{\lambda}}^{k\Sigma_{d}} k$ be the permutation module obtained from inducing up the trivial module $k$ from the Young subgroup $\Sigma^{\lambda}$. 
The Young module $Y^{\lambda}$ is the unique indecomposable summand of $M^{\lambda}$ which contains 
$S^{\lambda}$ as a submodule (cf. \cite[Def 4.6.1]{Mar}).  

Under the Schur functor, $\fm$, we can realize these aforementioned $k\Sigma_{d}$-modules as images of $S(m,d)$-modules as 
follows (cf. \cite[$\S$ 6]{Gr} and \cite[3.5, 3.6]{Don}):  
\begin{eqnarray*} 
\fm(\nabla(\lambda))&=&S^{\lambda};\\
\fm(\Delta(\lambda))&=&(S^\lambda)^*\cong S^{\lambda^{\prime}}\otimes \text{sgn};\\
\fm(I(\lambda))&=&\fm(P(\lambda))=Y^{\lambda};\\
\fm(T(\lambda))&=&Y^{\lambda^{\prime}}\otimes \text{sgn}.
\end{eqnarray*}

\subsection{Inverse Schur functors} As in Section \ref{S:Lowering}, the Schur functor may alternately be identified as
$$
\fm(M) := eM \cong \Hom_{S(m,d)}(S(m,d)e,M) \cong eS(m,d)\otimes_{S(m,d)}M.
$$
As before, $e$ may be chosen so that $S(m,d)e \cong W^{\otimes d} \cong eS(m,d)$ (cf. \cite{E}).

\begin{defn}  This allows one to define a ``Schur functor'' $\fn$: $\operatorname{Mod}(S(n,d))$ to $\operatorname{Mod}(k\Sigma_d)$  for arbitrary $n$, (i.e., even when $n < d$) by defining 
$$
\fn(M) := V^{\otimes d}\otimes_{S(n,d)}M,
$$
where $V$ is the natural $GL_n$-module.
\end{defn}

Note that $\fn$ need not be exact (just right exact) when $n < d$.  Similar to the lowering functor $\fmn$ of Section \ref{S:Lowering}, this general Schur functor $\fn$ admits a right adjoint $\gn$ taking $\text{Mod}(k\Sigma_d)$ to $\text{Mod}(S(n,d))$ which can be defined in the following way: for a $k\Sigma_d$-module, set $\gn(M) := \Hom_{k\Sigma_d}(V^{\otimes d}, M)$.  

Again, from the more general setting of algebras and idempotents (cf. \cite{A, DEN}), when $m \geq d$, $\gm$ is a right inverse to $\fm$, although not a 2-sided inverse in general.  That is, $\fm\circ\gm(M) \cong M$.
 
However, it was shown by Doty and Nakano \cite[Theorem 6.2]{DN} (following a result of Cline, Parshall, and Scott \cite[Theorem 5.2.4]{CPS}) for $m \geq d$ and $\la \in \Lambda^+(m,d)$ that tilting (and other) $S(n,d)$-modules may be recovered from $k\Sigma_d$-modules.

\begin{theorem}\label{T:GYT1} Let $\lambda\in \Lambda(m,d)_{+}$ with $m \geq d$. Then 
\begin{itemize} 
\item[(a)] $T(\lambda)= \gm(Y^{\lambda^{\prime}}\otimes \operatorname{sgn})$ when $p\geq 3$;
\item[(b)] $\Delta(\lambda)= \gm(S^{\lambda^{\prime}}\otimes \operatorname{sgn})$ when $p\geq 3$; 
\item[(c)] $P(\lambda)= \gm(Y^{\lambda})$ when $p\geq 2$. 
\end{itemize} 
\end{theorem}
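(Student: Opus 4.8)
The plan is to deduce Theorem~\ref{T:GYT1} from the adjunction $(\fm,\gm)$ together with the known behavior of the Schur functor on the relevant highest weight modules, recorded in Section~\ref{S:sym}, and the fact (cited from \cite{A, DEN}) that $\gm$ is a right inverse to $\fm$ when $m\geq d$. The key structural input is that for $m\geq d$ the idempotent $e\in S(m,d)$ is \emph{saturated} in the sense that $S(m,d)e$ is a full projective generator: $S(m,d)e\cong W^{\otimes d}$, which contains every indecomposable projective $S(m,d)$-module as a summand since every $P(\la)$ with $\la$ a $p$-restricted partition (equivalently $\la'$ $p$-regular) appears in $W^{\otimes d}$. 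Consequently $\gm$ is \emph{fully faithful} on its image, and more importantly, for any $S(m,d)$-module $N$ the counit $\fm\gm\fm(N)\to\fm(N)$ and the structure of the adjunction force $\gm(\fm(N))$ to be the largest submodule $N'\subseteq$ (injective hull considerations) whose "$\Sigma_d$-part" agrees with that of $N$; concretely, $\gm(\fm(N))$ is characterized as the unique module with socle-theoretic or highest-weight properties matching those of $N$ among modules $N'$ with $\fm(N')\cong\fm(N)$.

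First I would treat part (c). Since $\fm(P(\la))=Y^{\la}$ by the formula in Section~\ref{S:sym}, and $\fm$ sends projectives to (relatively) projective objects, it suffices to show $\gm(Y^\la)\cong P(\la)$. The cleanest route: $\gm$ is exact on a neighborhood of projectives here because $\Hom_{k\Sigma_d}(W^{\otimes d},-)$ applied to summands of permutation modules is well-behaved, and the adjunction gives $\Hom_{S(m,d)}(P(\mu),\gm(Y^\la))\cong\Hom_{k\Sigma_d}(\fm(P(\mu)),Y^\la)=\Hom_{k\Sigma_d}(Y^\mu,Y^\la)$, which has dimension equal to the multiplicity data of Young modules; comparing with $\Hom_{S(m,d)}(P(\mu),P(\la))$ and using that $\fm$ is fully faithful on projectives (a standard consequence of $S(m,d)e$ being a progenerator), one gets $\gm(Y^\la)\cong P(\la)$. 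This works for all $p\geq 2$, explaining the weaker hypothesis in (c).

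For parts (a) and (b) I would bootstrap from (c) together with the classical facts that, when $p\geq 3$, $T(\la)$ is a summand of a tensor product of the form $\nabla$'s (or dually Weyl-filtered), that $Y^{\la'}\otimes\sgn$ is a summand of the signed permutation module, and that $\fm(T(\la))=Y^{\la'}\otimes\sgn$ and $\fm(\Delta(\la))=S^{\la'}\otimes\sgn$ from Section~\ref{S:sym}. Concretely for (a): $T(\la)$ is both tilting and a summand of $P$-like objects in an appropriate sense, so it is $\fm$-``faithfully split,'' and one shows $\gm$ applied to a summand of a module $M$ with $\gm(\fm(M))\cong M$ commutes with taking that summand; since $Y^{\la'}\otimes\sgn$ is the summand of $\fm$ of a suitable tilting module whose $\gm$-preimage is itself, idempotent-splitting under the additive functor $\gm$ yields $\gm(Y^{\la'}\otimes\sgn)=T(\la)$. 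Part (b) is analogous but one instead uses that $\Delta(\la)$ has a concrete description (e.g.\ as a quotient of $W^{\otimes d}$ adapted to the Young/Specht filtration) so that $\gm(S^{\la'}\otimes\sgn)$, computed via $\Hom_{k\Sigma_d}(W^{\otimes d},S^{\la'}\otimes\sgn)$, has the correct character (it must, since $\fm$ preserves characters-up-to the Schur correspondence and $\fm\gm=\mathrm{id}$) and the correct highest weight, and then a highest-weight-category uniqueness argument pins it down to $\Delta(\la)$.

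The main obstacle I expect is the exactness/compatibility bookkeeping for $\gm$: unlike $\fmn$, the functor $\gm$ is only left exact, so asserting that $\gm$ "commutes with taking direct summands" (trivially true, as $\gm$ is additive) is fine, but asserting that $\gm$ of a summand of $\fm(T)$ recovers the matching summand of $T$ requires knowing $\gm\fm(T)\cong T$ for the \emph{ambient} tilting module $T$ — i.e.\ I need the right-inverse property not just for the named modules but for whatever tensor product or permutation-type module I use as scaffolding. Establishing $\gm\fm(N)\cong N$ for $N$ ranging over a large enough class (tilting modules, or modules with a $\nabla$-filtration, or all modules whose injective hull is a summand of $I(\mathrm{det}^{\otimes k})$-type objects) is the technical heart; this is exactly where \cite{DN, CPS, A, DEN} do the work, so I would cite \cite[Theorem 6.2]{DN} and \cite[Theorem 5.2.4]{CPS} for the tilting case and derive (b), (c) as corollaries via the $\Hom$-adjunction computations and highest-weight-category uniqueness described above.
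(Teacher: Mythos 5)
The paper does not prove Theorem~\ref{T:GYT1} at all: it records it as a known result, introduced in the sentence immediately before the statement with the words ``it was shown by Doty and Nakano \cite[Theorem 6.2]{DN} (following a result of Cline, Parshall, and Scott \cite[Theorem 5.2.4]{CPS})\ldots'' So the ``proof'' you were asked to reconstruct is simply a citation to those two references, and your final paragraph --- where you concede that the technical heart is exactly what \cite{DN} and \cite{CPS} establish and that you would cite them --- is in fact the content of the paper's treatment.

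That said, the sketch you built around the citation contains a genuine error worth flagging. You assert that for $m\geq d$ the idempotent $e$ is ``saturated'' in the sense that $S(m,d)e\cong W^{\otimes d}$ is a \emph{projective generator} of $S(m,d)$-mod, containing every indecomposable projective $P(\la)$ as a summand, and you then deduce that $\fm$ is fully faithful on projectives. This is false: as an $S(m,d)$-module, $W^{\otimes d}$ is a full \emph{tilting} module (every indecomposable $T(\la)$, $\la\in\Lambda^+(m,d)$, occurs as a summand), not a projective generator, and the indecomposable summands of $W^{\otimes d}$ are the $T(\la)$, not the $P(\la)$. Consequently $\fm$ is not fully faithful on the category of projectives for the reason you give, and the comparison $\Hom_{S(m,d)}(P(\mu),P(\la))\cong\Hom_{k\Sigma_d}(Y^\mu,Y^\la)$ --- which is indeed true in suitable generality, but via the double centraliser property and the results of \cite{CPS, DN}, not via a progenerator argument --- needs to be justified differently. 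The same misidentification infects the reasoning offered for parts~(a) and~(b). In short: the error does not matter for the purposes of this paper, because the statement is taken as input from \cite{DN, CPS} rather than proved, but if you wanted a self-contained argument you would have to replace the progenerator claim with the tilting-generator/Ringel-duality picture that actually underlies \cite[Theorem 5.2.4]{CPS} and \cite[Theorem 6.2]{DN}.
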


Using the ideas of Section \ref{S:Lowering}, we may extend the first two parts of this to arbitrary $n$.

\begin{theorem}\label{T:GYT} Assume $p \geq 3$.  Given positive integers $n, d$ and $\la \in \Lambda^+(n,d)$, there are isomorphisms of $S(n,d)$-modules
\begin{itemize} 
\item[(a)] $T(\lambda)= \gn(Y^{\lambda^{\prime}}\otimes \operatorname{sgn})$;
\item[(b)] $\Delta(\lambda)= \gn(S^{\lambda^{\prime}}\otimes \operatorname{sgn})$.
\end{itemize} 
\end{theorem}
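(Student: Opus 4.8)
The plan is to deduce Theorem~\ref{T:GYT} from Theorem~\ref{T:GYT1} by factoring the general inverse Schur functor $\gn$ through a large Schur algebra $S(m,d)$ with $m \geq d$, using the lowering functors of Section~\ref{S:Lowering}. Fix $\la \in \Lambda^+(n,d)$ and choose any $m \geq \max(n,d)$. The key structural identity to establish is that, on $k\Sigma_d$-modules, the right adjoint $\gn$ of $\fn$ is naturally isomorphic to the composite $\gmn \circ \gm$. Both $\gn$ and $\gmn \circ \gm$ are right adjoints of functors into $\text{Mod}(k\Sigma_d)$ — the former of $\fn$, the latter of $\fm \circ \fmn$ — so it suffices to check that $\fn \cong \fm \circ \fmn$ as functors $\text{Mod}(S(m,d)) \to \text{Mod}(k\Sigma_d)$, and then invoke uniqueness of adjoints. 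This last comparison is exactly the kind of idempotent bookkeeping handled in \cite{DEN}: with $f \in S(m,d)$ the idempotent realizing $fS(m,d)f \cong S(n,d)$ and $fW^{\otimes d} \cong V^{\otimes d}$, and $e \in S(n,d)$ (pulled into $S(m,d)$) realizing the Schur functor, one has $\fm\fmn(M) = e(fM) = (ef)M$ and $ef$ is an idempotent in $S(m,d)$ with $(ef)S(m,d)(ef) \cong k\Sigma_d$ and $S(m,d)(ef) \cong fW^{\otimes d} \cong V^{\otimes d}$, which is precisely the data defining $\fn$ on the image of $\fmn$; chasing the tensor-product description $\fn(M) = V^{\otimes d} \otimes_{S(n,d)} M$ against $eS(m,d)f \otimes_{S(m,d)} M$ gives the natural isomorphism.

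With $\gn \cong \gmn \circ \gm$ in hand, the proof is a short computation. For part~(a): by Theorem~\ref{T:GYT1}(a), applied over $S(m,d)$ (legitimate since $m \geq d$ and $\la \in \Lambda^+(m,d)$ via the inclusion $\Lambda^+(n,d) \subseteq \Lambda^+(m,d)$), we have $\gm(Y^{\la'} \otimes \sgn) \cong \bar T(\la)$ as an $S(m,d)$-module. Applying $\gmn$ and using $\gn \cong \gmn\gm$ yields
\[
\gn(Y^{\la'}\otimes \sgn) \cong \gmn(\bar T(\la)).
\]
But $\la$ has at most $n$ parts, so Lemma~\ref{L:fstandard} — or rather the lowering statement for $\gmn$ obtained from it, since $\gmn$ sends $\bar T(\la)$ to $T(\la)$ when $\la \in \Lambda^+(n,d)$ (this is the standard behavior of the lowering functor on tilting modules, implicit in \cite{E}) — gives $\gmn(\bar T(\la)) \cong T(\la)$ over $S(n,d)$. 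Hence $\gn(Y^{\la'} \otimes \sgn) \cong T(\la)$. Part~(b) is identical, replacing $Y^{\la'} \otimes \sgn$ by $S^{\la'} \otimes \sgn$ and $\bar T(\la)$ by $\bar\Delta(\la)$, invoking Theorem~\ref{T:GYT1}(b) and the fact that $\gmn\bar\Delta(\la) \cong \Delta(\la)$ for $\la \in \Lambda^+(n,d)$.

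I expect the main obstacle to be the clean verification that $\gmn \circ \gm$ genuinely computes $\gn$ — in particular, pinning down that the composite of right adjoints is the right adjoint of the composite, and that under this identification the idempotent $ef$ really does reproduce the defining data of $\fn$ (including the subtlety that $\fn$ is only right exact when $n < d$, so one is working with a genuine tensor-product functor rather than multiplication by an idempotent in $S(n,d)$ itself). A secondary point requiring care is that $\gmn$ applied to $\bar T(\la)$ and $\bar\Delta(\la)$ returns $T(\la)$ and $\Delta(\la)$ rather than something larger: this should follow because $\fmn\gmn \cong \text{id}$ forces the unique $\la$-part of $\gmn\bar T(\la)$ down to $T(\la)$ while killing all composition factors indexed by partitions with more than $n$ parts, and $\bar T(\la)$ has no such factors when $\la \in \Lambda^+(n,d)$ — but making this airtight uses Lemma~\ref{L:fstandard} together with the fact that $\gmn$ preserves the tilting/Weyl property, which one should cite from \cite{E} or argue directly via the exactness of $\fmn$ and the adjunction. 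Everything else is formal.
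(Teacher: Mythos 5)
Your strategy --- passing through the large Schur algebra $S(m,d)$ with $m \geq d$, invoking Theorem~\ref{T:GYT1} there, and then lowering back to $S(n,d)$ via the idempotent $f$ and Lemma~\ref{L:fstandard} --- is exactly the paper's, but you have the direction of $\gmn$ reversed throughout, and this breaks the argument at the level of functor composition. Recall from Section~\ref{S:Lowering} that $\fmn : \operatorname{Mod}(S(m,d)) \to \operatorname{Mod}(S(n,d))$ is the exact lowering functor (multiplication by $f$) and $\gmn : \operatorname{Mod}(S(n,d)) \to \operatorname{Mod}(S(m,d))$ is its \emph{right adjoint}, going the other way. The composite you want is therefore $\fmn\circ\gm$, from $\operatorname{Mod}(k\Sigma_d)$ to $\operatorname{Mod}(S(n,d))$; the composite $\gmn\circ\gm$ that you write does not typecheck, since $\gm$ and $\gmn$ both land in $\operatorname{Mod}(S(m,d))$. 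The same confusion infects the adjoint argument: $\fm\circ\fmn$ has no meaning (both functors start from $\operatorname{Mod}(S(m,d))$), and the left adjoint of $\fmn$ is $S(m,d)f\otimes_{S(n,d)}(-)$, not $\fmn$ itself, so even after correcting directions the uniqueness-of-adjoints step needs more care than you give it. Likewise, the step where you ``apply $\gmn$ to $\bar{T}(\lambda)$'' to land on $T(\lambda)$ is really an application of $\fmn$, i.e.\ of Lemma~\ref{L:fstandard}. Finally, the idempotent $e\in S(n,d)$ you invoke to realize $\fn$ does not exist when $n < d$ --- this is precisely why $\fn$ is only right exact in that range, a point you acknowledge elsewhere but that undercuts the bookkeeping you describe.

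Once $\gmn$ is replaced by $\fmn$ in the right places, your argument lines up with the paper's. The paper proves the key identity $\gn(N)\cong f\gm(N)=\fmn(\gm(N))$ directly by adjoint associativity rather than by uniqueness of adjoints:
\begin{align*}
f\gm(N) &\cong \Hom_{S(m,d)}\bigl(S(m,d)f, \Hom_{k\Sigma_d}(W^{\otimes d},N)\bigr)\\
&\cong \Hom_{k\Sigma_d}\bigl(S(m,d)f\otimes_{S(m,d)} W^{\otimes d}, N\bigr)\\
&\cong \Hom_{k\Sigma_d}(fW^{\otimes d},N) \cong \Hom_{k\Sigma_d}(V^{\otimes d},N) = \gn(N).
\end{align*}
Then $\gn(Y^{\lambda'}\otimes\operatorname{sgn})\cong f\gm(Y^{\lambda'}\otimes\operatorname{sgn})= f\bar{T}(\lambda)\cong T(\lambda)$ by Theorem~\ref{T:GYT1} and Lemma~\ref{L:fstandard}, exactly as you intended; part (b) is the same with $S^{\lambda'}\otimes\operatorname{sgn}$ and $\bar{\Delta}(\lambda)$. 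The direct computation is cleaner than the route through uniqueness of adjoints because it never requires identifying the left adjoint of $\fmn$ or wrestling with the nonexistence of an idempotent realization of $\fn$ when $n<d$.
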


\begin{proof} If $n \geq d$, this is simply Theorem \ref{T:GYT1}.  Suppose $n < d$.  Choose an integer $m \geq d$ (hence, $m > n$).  Let $W$ and $V$ be $m$- and $n$-dimensional vector spaces over $k$, respectively. Choose an idempotent $f$ as in Section \ref{S:Lowering} so that $fW^{\otimes d} = V^{\otimes d}$.  We first make the following observation.  Let $N$ be any finite-dimensional $k\Sigma_d$-module. Then
\begin{align*}
f\gm(N) &\cong \Hom_{S(m,d)}(S(m,d)f, \gm(N)) \text{ by } \eqref{E:Fequiv} \\
	&= \Hom_{S(m,d)}(S(m,d)f, \Hom_{k\Sigma_d}(W^{\otimes d},N))\\
	&\cong \Hom_{k\Sigma_d}(S(m,d)f\otimes_{S(m,d)} W^{\otimes d}, N) \text{ by Adjoint Associativity}\\
	&\cong \Hom_{k\Sigma_d}(S(m,d)\otimes_{S(m,d)}fW^{\otimes d},N)\\
	&\cong \Hom_{k\Sigma_d}(S(m,d)\otimes_{S(m,d)} V^{\otimes d},N)\\
	&\cong \Hom_{k\Sigma_d}(V^{\otimes d},N)\\
	&= \gn(N).
\end{align*}
For part (a), applying this to $N := Y^{\la'}\otimes\rm{sgn}$ and using Theorem \ref{T:GYT1} and Lemma \ref{L:fstandard}, we get
$$
\gn\left(Y^{\la'}\otimes\rm{sgn}\right) \cong f\gm\left(Y^{\la'}\otimes\rm{sgn}\right) \cong f\bar{T}(\la) \cong T(\la).
$$
Part (b) is similar.
\end{proof}

\subsection{Simple Modules}  As noted above, for $m \geq d$, the action of the Schur functor on simple modules is known (either resulting in zero or a simple $k\Sigma_d$-module).   The following shows how the inverse lowering functor $\gmn$ may be used to understand the action of $\fn$ on simples more generally.

\begin{prop}\label{P:fn} Assume $p \geq 2$. Let $\si \in \Lambda^+(n,d)$.  For any $m \geq n$ with $m \geq d$, 
$$
\fn(L(\si)) \cong e\gmn(L(\si)) = \fm(\gmn(L(\si))).
$$
\end{prop}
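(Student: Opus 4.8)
The plan is to unwind the definitions of the three functors involved and identify the relevant tensor/Hom compositions, using the fact that the idempotents $f$ and $e$ can be chosen compatibly so that $fW^{\otimes d} \cong V^{\otimes d}$. First I would recall that $\gmn(L(\si)) = \Hom_{S(n,d)}(fS(m,d), L(\si))$, so that the right-hand side $\fm(\gmn(L(\si))) = e\gmn(L(\si))$ is, by the idempotent description of $\fm$, the module $\Hom_{S(n,d)}(fS(m,d), L(\si))$ multiplied by $e \in S(m,d)$, i.e.\ $\Hom_{S(m,d)}(S(m,d)e, \gmn(L(\si)))$ via \eqref{E:Fequiv}. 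The equality $e\gmn(L(\si)) = \fm(\gmn(L(\si)))$ is then just the definition of $\fm$ applied to the $S(m,d)$-module $\gmn(L(\si))$, so the real content is the first isomorphism $\fn(L(\si)) \cong e\gmn(L(\si))$.

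For that isomorphism I would mimic the adjoint-associativity computation already carried out in the proof of Theorem~\ref{T:GYT}, but ``in the other direction.'' Starting from $e\gmn(L(\si)) \cong \Hom_{S(m,d)}(S(m,d)e, \gmn(L(\si))) = \Hom_{S(m,d)}(S(m,d)e, \Hom_{S(n,d)}(fS(m,d), L(\si)))$, apply adjoint associativity to get $\Hom_{S(n,d)}(fS(m,d)\otimes_{S(m,d)} S(m,d)e, L(\si)) \cong \Hom_{S(n,d)}(fS(m,d)e, L(\si))$. Now I would use the compatible choices of idempotents: $S(m,d)e \cong W^{\otimes d}$, so $fS(m,d)e \cong fW^{\otimes d} \cong V^{\otimes d}$ as an $(S(n,d), k\Sigma_d)$-bimodule. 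Hence $e\gmn(L(\si)) \cong \Hom_{S(n,d)}(V^{\otimes d}, L(\si))$. Finally I would compare this with $\fn(L(\si)) = V^{\otimes d} \otimes_{S(n,d)} L(\si)$. Since $V^{\otimes d}$ is self-dual as an $S(n,d)$-module (it is a tilting module, being both a direct sum of tilting $\nabla$-filtered summands and $\tau$-self-dual — alternatively cite \cite{E} or \cite{DN}), we have $\Hom_{S(n,d)}(V^{\otimes d}, L(\si)) \cong \Hom_{S(n,d)}(L(\si)^*, (V^{\otimes d})^*) \cong (V^{\otimes d} \otimes_{S(n,d)} L(\si))^{*}{}^{*}$-type identifications, or more directly $\Hom_{S(n,d)}(V^{\otimes d}, L(\si)) \cong (V^{\otimes d})^{*} \otimes_{S(n,d)} L(\si) \cong V^{\otimes d} \otimes_{S(n,d)} L(\si) = \fn(L(\si))$, where the last step uses self-duality of $V^{\otimes d}$.

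The step I expect to be the main obstacle is making the self-duality/dual-bimodule bookkeeping precise: one must track the left–right $S(n,d)$-module and $k\Sigma_d$-module structures on $V^{\otimes d}$ carefully so that the natural isomorphism $\Hom_{S(n,d)}(V^{\otimes d}, -) \cong (V^{\otimes d})^{*} \otimes_{S(n,d)} -$ (valid because $V^{\otimes d}$ is finitely generated projective as a module over the opposite algebra — or at least behaves well enough here, cf.\ \cite{E, DEN}) genuinely identifies with $\fn$, and that the resulting isomorphism is an isomorphism of $S(n,d)$-modules, not merely of vector spaces. A cleaner route, which I would ultimately prefer, is to bypass dualization entirely: observe that $\fn(-) = V^{\otimes d} \otimes_{S(n,d)} (-) \cong fW^{\otimes d} \otimes_{S(n,d)}(-)$ and that restriction-of-scalars along $fS(m,d)f \cong S(n,d)$ lets us write $\fn(L(\si)) \cong eS(m,d)f \otimes_{S(n,d)} L(\si) \cong eS(m,d) \otimes_{S(m,d)} (S(m,d)f \otimes_{S(n,d)} L(\si)) = \fm(\gmn^{\vee}(L(\si)))$, and then identify $S(m,d)f \otimes_{S(n,d)} L(\si)$ with $\gmn(L(\si)) = \Hom_{S(n,d)}(fS(m,d), L(\si))$ using that $\fmn$ and $\gmn$ are essentially inverse on the image and that $S(m,d)f$ is projective over $S(n,d)$ on the right — this is where one invokes \cite{DEN} or \cite{A} for the precise hypotheses. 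Either way, the proof is a short diagram chase once the compatible idempotents and the self-duality of $V^{\otimes d}$ are in hand.
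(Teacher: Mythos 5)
Your argument is the formal dual of the paper's: the paper stays on the tensor side throughout, computing
\begin{equation*}
\fn(L(\si)) \cong V^{\otimes d}\otimes_{fS(m,d)f}L(\si) \cong W^{\otimes d}f\otimes_{fS(m,d)f}L(\si) \cong eS(m,d)\otimes_{fS(m,d)f}L(\si) \cong e\bigl[S(m,d)\otimes_{fS(m,d)f}L(\si)\bigr],
\end{equation*}
whereas you unwind the Hom side $e\gmn(L(\si))\cong\Hom_{S(m,d)}(S(m,d)e,\gmn(L(\si)))\cong\Hom_{S(n,d)}(fS(m,d)e,L(\si))\cong\Hom_{S(n,d)}(V^{\otimes d},L(\si))$. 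The tensor chain sketched at the end of your proposal is, in effect, the paper's proof. You have correctly put your finger on the one nontrivial matter, namely passing between $V^{\otimes d}\otimes_{S(n,d)}L(\si)$ and $\Hom_{S(n,d)}(V^{\otimes d},L(\si))$. However, the justification you offer does not close it: the natural isomorphism $\Hom_{S(n,d)}(V^{\otimes d},-)\cong(V^{\otimes d})^{*}\otimes_{S(n,d)}(-)$ requires $V^{\otimes d}$ to be projective over $S(n,d)$, which fails for $n<d$ (it is tilting but not projective — this is precisely why the paper warns that $\fn$ is only right exact in that range; one of the two functors is left exact, the other right exact, so they cannot agree globally). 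What is really at stake is that the tensor chain lands on $S(m,d)f\otimes_{S(n,d)}L(\si)$, the \emph{left} adjoint of $\fmn$ applied to $L(\si)$, while the paper's displayed definition of $\gmn$ is the \emph{right} adjoint $\Hom_{S(n,d)}(fS(m,d),-)$. These two are interchanged by $\tau$-duality (which fixes $L(\si)$), so they share composition factors — which is all that is used downstream in \cref{C:fnD} — but matching them after applying $e$ does require justification if one insists on the Hom formula for $\gmn$. In short: you have located the same subtle point that the paper's own final line passes over; the efficient reading is to take the tensor chain as the proof, read $\gmn$ in its tensor (left-adjoint) incarnation, and avoid the projectivity/self-duality shortcut, which is not available here.
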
 

\begin{proof} Given $L(\si)$, we have
\begin{align*}
\fn(L(\si)) &\cong V^{\otimes d}\otimes_{S(n,d)}L(\si)\\
	&\cong V^{\otimes d}\otimes_{fS(m,d)f}L(\si)\\
	&\cong W^{\otimes d}f \otimes_{fS(m,d)f}L(\si)\\
	&\cong W^{\otimes d}\otimes_{fS(m,d)f}L(\si)\\
	&\cong eS(m,d)\otimes_{fS(m,d)f}L(\si)\\
	&\cong e[S(m,d)\otimes_{fS(m,d)f}L(\si)]\\
	&\cong e\gmn(L(\si))\\
	&\cong \fm(\gmn(L(\si))).
\end{align*}
\end{proof}

\begin{cor}\label{C:fnD} Assume $p \geq 2$ and  $\si \in \Lambda^+(n,d)$.  Let $D_{\mu}$ be a $k\Sigma_d$-composition factor of $\fn(L(\si))$.  
Then $\mu$ is $p$-restricted.  
\begin{itemize}
\item[(i)] If $\mu = \si$, then $\si$ is $p$-restricted and $[\fn(L(\si)) : D_{\mu}] = 1$.
\item[(ii)] If $\mu \neq \si$, then $\mu$ has {\em more} than $n$ parts.
\end{itemize}
\end{cor}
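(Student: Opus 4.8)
The plan is to leverage Proposition~\ref{P:fn}, which identifies $\fn(L(\si))$ with $\fm(\gmn(L(\si)))$ for any $m \geq n$ with $m \geq d$, and then to transfer known facts about the ordinary Schur functor $\fm$ (valid since $m \geq d$) together with the composition-factor structure of $\gmn(L(\si))$ described at the end of Section~\ref{S:Lowering}. First I would fix such an $m$ and write $\fn(L(\si)) \cong \fm(\gmn(L(\si)))$. Since $\fm$ is exact, a composition series of $\gmn(L(\si))$ over $S(m,d)$ gives, after applying $\fm$, a filtration of $\fn(L(\si))$ whose sections are $\fm(\bar{L}(\nu)) = e\bar{L}(\nu)$ for the composition factors $\bar{L}(\nu)$ of $\gmn(L(\si))$; each such section is either $0$ or the simple $k\Sigma_d$-module $D_\nu$, and the nonzero ones occur exactly when $\nu$ is $p$-restricted. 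Hence every composition factor $D_\mu$ of $\fn(L(\si))$ arises as $e\bar{L}(\nu) = D_\nu$ for some composition factor $\bar{L}(\nu)$ of $\gmn(L(\si))$ with $\nu$ $p$-restricted, and in particular $\mu = \nu$ is $p$-restricted, giving the first assertion.

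For the multiplicity statements I would invoke the structural description already recorded in the excerpt: $\fmn(\gmn(L(\si))) \cong L(\si)$, so $\bar{L}(\si)$ appears exactly once as a composition factor of $\gmn(L(\si))$, and every other composition factor $\bar{L}(\nu)$ has $\nu$ with more than $n$ parts. For part (i), suppose $\mu = \si$. Among the composition factors $\bar{L}(\nu)$ of $\gmn(L(\si))$, the only one that can yield $D_{\si}$ under $\fm$ is $\bar{L}(\si)$ itself: indeed if $\bar{L}(\nu)$ contributes $D_\nu = D_\si$ then $\nu = \si$ (distinct $p$-restricted partitions give non-isomorphic $D_\nu$). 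Since $\bar{L}(\si)$ occurs with multiplicity one and, because $\mu=\si$ is a composition factor of $\fn(L(\si))$, this occurrence is nonzero, we get $e\bar{L}(\si) = D_\si$, so $\si$ is $p$-restricted and $[\fn(L(\si)):D_\si] = 1$. For part (ii), if $\mu \neq \si$, then the composition factor $\bar{L}(\nu)$ of $\gmn(L(\si))$ producing $D_\mu$ satisfies $\nu = \mu \neq \si$, so by the structural fact $\nu = \mu$ has more than $n$ parts.

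The only mild subtlety — and the step I would be most careful about — is the bookkeeping between composition factors of $\gmn(L(\si))$ as an $S(m,d)$-module and composition factors of its image under $\fm$ as a $k\Sigma_d$-module: one must use that $\fm$ kills precisely the simples $\bar{L}(\nu)$ with $\nu$ not $p$-restricted and sends the $p$-restricted $\bar{L}(\nu)$ to the pairwise non-isomorphic simples $D_\nu$, so that multiplicities are preserved factor-by-factor (no cancellation or merging can occur). Once that correspondence is in hand the rest is a direct read-off from the facts already established, so I do not anticipate a genuine obstacle; the argument is essentially an exactness-plus-bookkeeping deduction from Proposition~\ref{P:fn} and the remarks following Lemma~\ref{L:fstandard}.
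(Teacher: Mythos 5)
Your argument is correct and takes essentially the same route as the paper's own proof: both apply Proposition~\ref{P:fn} to rewrite $\fn(L(\si))$ as $\fm(\gmn(L(\si)))$, then use exactness of $\fm$ (for $m\geq d$) to transfer composition multiplicities from $\gmn(L(\si))$ to $\fn(L(\si))$, and finally invoke the structural facts recorded after Lemma~\ref{L:fstandard} --- that $\bar{L}(\si)$ occurs exactly once in $\gmn(L(\si))$ and every other composition factor $\bar{L}(\nu)$ has $\nu$ with more than $n$ parts. If anything, your write-up is the more careful of the two: the paper's printed proof contains a couple of slips (writing $\fm$ where $\fmn$ is clearly meant, e.g.\ in the deduction that $[\gmn(L(\si)):\bar{L}(\si)]=1$ and in the assertion that $\fm(\bar L(\mu))=0$ when $\mu\neq\si$), whereas you make the bookkeeping between $S(m,d)$-composition factors and $k\Sigma_d$-composition factors explicit and correct.
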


\begin{proof} Choose (as needed) $m \geq n$ with $m \geq d$.     From Proposition~\ref{P:fn}, $D_{\mu}$ is a composition factor of $\fm(\gmn(L(\si)))$.   That means, $\bar{L}(\mu)$ is an $S(m,d)$-module with $\fm(\bar{L}(\mu)) = D_{\mu}$.  As noted in Section~\ref{S:sym}, $\mu$ must be $p$-restricted.  Since $\fm(\gmn(L(\si))) \cong L(\si)$ and $\fm$ is an exact functor, $[\gmn(L(\si)) : L(\si)]  = 1$, from which (i) follows.   Moreover, if $\mu \neq \si$, then $\fm(\bar{L}(\mu)) = 0$.  From Lemma~\ref{L:fstandard}, $\mu$ must have more than $n$ parts.
\end{proof}


\section{Restricting modules}

\subsection{} For $G = GL_n$, the inverse Schur functor can be used with information about symmetric group modules to prove the indecomposability of polynomial $G$-modules upon restriction to $G_{r}T$. Recall that the Schur functor $\fm: \text{Mod}(S(m,d)) \to \text{Mod}(k\Sigma_d)$ may be considered as a functor from $\text{Mod}(S(m,d)_r)$ to $\text{Mod}(k\Sigma_d)$.

\begin{theorem} \label{T:G-indecomposability} Assume $p \geq 2$. Let $G=GL_{m}$ and $m \geq d$. Let $N$ be an indecomposable module for $k\Sigma_{d}$. Then 
\begin{itemize} 
\item[(a)]  $\gm(N)$ is an indecomposable (polynomial) module for $G$. 
\item[(b)]  $\gm(N)|_{G_{r}T}$ is an indecomposable (polynomial) module for $G_{r}T$. 
\end{itemize}  
\end{theorem}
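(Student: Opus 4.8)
The plan is to deduce indecomposability from the structure of the functor $\gm$ together with the fact that it is the right adjoint to the exact functor $\fm$, which is moreover a retraction in the sense that $\fm\circ\gm \cong \mathrm{id}$.

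First I would prove part (a). Suppose $\gm(N) \cong A \oplus B$ as $S(m,d)$-modules. Applying the exact functor $\fm$ gives $N \cong \fm(A) \oplus \fm(B)$ as $k\Sigma_d$-modules. Since $N$ is indecomposable, one summand is zero, say $\fm(B) = 0$; relabel so that $\fm(A) \cong N$ and $\fm(B) = 0$. The key point is that $\gm$, being a right adjoint whose composite with $\fm$ is the identity, reflects this: I claim $B = 0$. To see this, recall $\gm(N) = \Hom_{k\Sigma_d}(W^{\otimes d}, N)$ and that $\gm$ is fully faithful (a right adjoint to $\fm$ whose counit $\fm\gm \Rightarrow \mathrm{id}$ is an isomorphism has an invertible unit after restricting to the essential image; more concretely, the unit $M \to \gm\fm(M)$ followed by $\fm$ recovers the identity, and one checks $\gm$ sends the decomposition of $N$ back). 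A cleaner route: any $S(m,d)$-module in the image of $\gm$ has the property that it has no nonzero submodule or quotient killed by $\fm$ except via the corresponding $k\Sigma_d$-structure — but the most efficient argument is that $\mathrm{End}_{S(m,d)}(\gm(N)) \cong \mathrm{End}_{k\Sigma_d}(\fm\gm(N)) \cong \mathrm{End}_{k\Sigma_d}(N)$ by adjunction and $\fm\gm \cong \mathrm{id}$, so the endomorphism ring of $\gm(N)$ is isomorphic to that of $N$; since $N$ is indecomposable with local endomorphism ring (it is finite-dimensional over $k$), so is $\gm(N)$, hence $\gm(N)$ is indecomposable. This endomorphism-ring comparison is the heart of part (a) and I would present it as the main step.

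Next, for part (b), I would use Proposition~\ref{P:MrDtoGrT}. The module $\gm(N)$ is a polynomial $G$-module of degree $d$ and hence, via the Doty--Nakano--Peters theory, is naturally a module for the infinitesimal Schur algebra $S(m,d)_r$, equivalently an $M_rD$-module (this uses that the idempotents $e$ and $f$ may be taken inside $S(m,d)_r$, as recorded in Section~\ref{S:sym}; indeed $\gm(N) = \Hom_{k\Sigma_d}(eS(m,d), N)$ and since $e \in S(m,d)_r$ with $eS(m,d)_r e \cong k\Sigma_d$, one gets a natural $S(m,d)_r$-action). So it suffices to show $\gm(N)$ is indecomposable as an $S(m,d)_r$-module; then Proposition~\ref{P:MrDtoGrT}(b) gives indecomposability over $G_rT$. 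To get $S(m,d)_r$-indecomposability, I would rerun the endomorphism-ring argument one level down: the Schur functor $\bar\fm \colon \mathrm{Mod}(S(m,d)_r) \to \mathrm{Mod}(k\Sigma_d)$ is exact, admits the right adjoint $\bar\gm(-) = \Hom_{k\Sigma_d}(eS(m,d)_r, -)$, and satisfies $\bar\fm\circ\bar\gm \cong \mathrm{id}$ (same idempotent computation, now inside $S(m,d)_r$). One must check that $\gm(N)$, viewed as an $S(m,d)_r$-module, agrees with $\bar\gm(N)$ — this follows because $eS(m,d) \cong W^{\otimes d}$ restricts as an $S(m,d)_r$-$k\Sigma_d$-bimodule to $eS(m,d)_r$ after the appropriate identification, so $\Hom$ over $k\Sigma_d$ computes the same space with compatible $S(m,d)_r$-action. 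Then $\mathrm{End}_{S(m,d)_r}(\bar\gm(N)) \cong \mathrm{End}_{k\Sigma_d}(N)$ is local, so $\gm(N)|_{S(m,d)_r}$ is indecomposable.

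The main obstacle I anticipate is the bookkeeping in the previous paragraph: verifying that the $S(m,d)$-module $\gm(N)$, when restricted to the subalgebra $S(m,d)_r$, coincides with the inverse Schur functor $\bar\gm$ built intrinsically from $S(m,d)_r$. This is where one genuinely needs that $W^{\otimes d} = S(m,d)e$ is free/projective as a right $k\Sigma_d$-module \emph{and} that $eS(m,d)_r e \cong k\Sigma_d$ with $eS(m,d)_r$ realizing $W^{\otimes d}$ as a module already over the smaller algebra — facts that are available from \cite{DNP96, DEN} and are quoted in Section~\ref{S:sym}. Once this identification is in hand, everything else is a two-fold application of the same adjunction-plus-retraction lemma, and part (a) is the special case ``$r = \infty$.'' I would therefore isolate the following lemma and prove (a) and (b) as instances of it: \emph{if $A$ is a finite-dimensional algebra, $e \in A$ an idempotent, and the Schur-type functor $eA\otimes_A - \colon \mathrm{Mod}(A) \to \mathrm{Mod}(eAe)$ has a right adjoint $\Hom_{eAe}(eA, -)$ with invertible counit, then that right adjoint preserves indecomposability}, the proof being exactly the isomorphism $\mathrm{End}_A(\Hom_{eAe}(eA,N)) \cong \mathrm{End}_{eAe}(N)$.
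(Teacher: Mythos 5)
Your proof is correct, but it takes a genuinely different route than the paper's. The paper proves only part (b), from which (a) follows, by a \emph{socle} argument: given a decomposition $\gm(N)|_{S(m,d)_r} = M_1 \oplus M_2$, one picks a simple $L_j$ in the $S(m,d)_r$-socle of each $M_j$; the adjunction $\Hom_{S(m,d)_r}(L_j, \gm(N)) \cong \Hom_{k\Sigma_d}(eL_j, N)$ together with left-exactness of $e\cdot(-)$ then forces $eM_j \neq 0$ for both $j$, contradicting the indecomposability of $N = eM_1 \oplus eM_2$. You instead observe that $\gm$ and its infinitesimal analogue are fully faithful (being right adjoints whose counit is an isomorphism), so $\End_{S(m,d)_r}\bigl(\gm(N)\bigr) \cong \End_{k\Sigma_d}(N)$ is local and indecomposability is immediate; this also gives (a) directly rather than as a corollary of (b). Both arguments hinge on the identical technical input, which you are right to isolate: the restriction of $\gm(N)$ to $S(m,d)_r$ must coincide with the intrinsic right adjoint $\Hom_{k\Sigma_d}(eS(m,d)_r, N)$, which reduces to the bimodule equality $eS(m,d) = eS(m,d)_r$. (This holds since, for $m \geq d$ and $e = \xi_\omega$ with $\omega = (1,\dots,d)$, each basis element $\xi_{\omega,j}$ of $eS(m,d)$ has every off-diagonal entry of its content matrix at most $1 < p^r$, hence lies in $S(m,d)_r$.) The paper relies on this identification silently in the displayed $\Hom$-isomorphism, since the right adjoint of $e\cdot(-)$ on $\Mod(S(m,d)_r)$ is $\Hom_{k\Sigma_d}(eS(m,d)_r,-)$ and not a priori $\Res\circ\gm$. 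Your packaging via the abstract lemma on idempotent truncations is arguably cleaner, and the invertibility of the counit is automatic in that setting, so nothing beyond the compatibility of $\gm$ with restriction to $S(m,d)_r$ actually needs to be verified.
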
 

\begin{proof} Note that it suffices to prove (b).  By Proposition~\ref{P:MrDtoGrT}, it suffices to show that $\gm(N)|_{G_{r}T}$ is an indecomposable $M_{r}D$-module or equivalently an indecomposable $S(m,d)_{r}$-module. 

Suppose that  $\gm(N)|_{G_{r}T} =M_{1}\oplus M_{2}$ as an $S(m,d)_{r}$-module. Then 
$$N\cong \fm (\gn(N)|_{G_{r}T}) = \fm(M_{1})\oplus \fm(M_{2}).$$ 
Now let $L_{j}$ be a simple module in the $S(m,d)_{r}$-socle of $M_{j}$ for $j=1,2$. Then 
$$0\neq \text{Hom}_{S(m,d)_{r}}(L_{j}, \gm(N)|_{G_{r}T})\cong  \text{Hom}_{k\Sigma_{d}}(\fm(L_{j}),N).$$ 
This proves that $\fm(L_{j})\neq 0$ for $j=1,2$, thus, by left-exactness of $\fm$,  $\fm(M_{j})\neq 0$ for $j=1,2$. This contradicts the indecomposability of $N$ as a $k\Sigma_{d}$-module. 
\end{proof}

\subsection{} We can now prove that Weyl modules and tilting modules for $G=GL_m$ and for weights in $\Lambda^{+}(m,d)$ where $m\geq d$ are indecomposable upon restriction to $G_rT$. 

\begin{cor}\label{C:GrIndec} Let $G=GL_{m}$. Let  $\lambda\in \Lambda^{+}(m,d)$ for 
$m\geq d$, and let $X(\lambda)$ be any of the following modules 
\begin{itemize} 
\item[(a)] $\nabla(\lambda)$ (costandard module) for $p\geq 3$; 
\item[(b)] $\Delta(\lambda)$ (standard/Weyl module) for $p\geq 3$;
\item[(c)] $T(\lambda)$ (tilting module) for $p\geq 3$;
\item[(d)] $P(\lambda)$ (projective imodule) for $p\geq 2$;
\item[(e)]  $I(\lambda)$ (injective module) for $p\geq 2$. 
\end{itemize} 
Then $X(\lambda)\mid_{G_{r}T}$ is indecomposable. 
\end{cor}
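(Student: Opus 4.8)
The plan is to realize $\Delta(\lambda)$, $T(\lambda)$ and $P(\lambda)$ as images under $\gm$ of indecomposable $k\Sigma_d$-modules, invoke Theorem~\ref{T:G-indecomposability}(b), and then deduce the costandard and injective cases from these by means of the contravariant duality ${}^{\tau}(-)$.

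By Theorem~\ref{T:GYT1}, $T(\lambda)\cong\gm(Y^{\lambda'}\otimes\sgn)$ and $\Delta(\lambda)\cong\gm(S^{\lambda'}\otimes\sgn)$ when $p\geq 3$, while $P(\lambda)\cong\gm(Y^{\lambda})$ when $p\geq 2$. To apply Theorem~\ref{T:G-indecomposability}(b) I need the corresponding $k\Sigma_d$-module to be indecomposable. For $P(\lambda)$ and $T(\lambda)$ this is clear: the Young module $Y^\lambda$ is indecomposable by construction, and tensoring with the invertible one-dimensional module $\sgn$ preserves indecomposability. For $\Delta(\lambda)$ I would argue that $\gm$ is additive and that $\fm\circ\gm$ is naturally isomorphic to the identity (since $m\geq d$), so $\gm$ sends nonzero modules to nonzero modules; as $\Delta(\lambda)=\gm(S^{\lambda'}\otimes\sgn)$ is indecomposable over $S(m,d)$ (having simple head $L(\lambda)$), a nontrivial decomposition of $S^{\lambda'}\otimes\sgn$ would split $\Delta(\lambda)$, so $S^{\lambda'}\otimes\sgn$, and hence $S^{\lambda'}$, is indecomposable over $k\Sigma_d$. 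Theorem~\ref{T:G-indecomposability}(b) then yields that $\Delta(\lambda)|_{G_rT}$, $T(\lambda)|_{G_rT}$ and $P(\lambda)|_{G_rT}$ are indecomposable, giving (b), (c) and (d).

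For (a) and (e) I would not seek a direct $\gm$-description of $\nabla(\lambda)$ or $I(\lambda)$, but instead use the standard identifications ${}^{\tau}\Delta(\lambda)\cong\nabla(\lambda)$ and ${}^{\tau}P(\lambda)\cong I(\lambda)$ (cf.\ \cite[II.2]{rags}). Since $\tau$ is defined over $\mathbb{F}_p$ it commutes with the Frobenius morphism, so it fixes $T$ and preserves each Frobenius kernel $G_r$, hence restricts to an antiautomorphism of $G_rT$; consequently ${}^{\tau}(-)$ is an exact contravariant endofunctor of the category of finite-dimensional $G_rT$-modules with ${}^{\tau}({}^{\tau}M)\cong M$ that commutes with restriction from $G$ to $G_rT$, and, being an anti-self-equivalence, it preserves indecomposability. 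Applying it to the cases (b) and (d) just proved shows that $\nabla(\lambda)|_{G_rT}$ and $I(\lambda)|_{G_rT}$ are indecomposable, completing (a) and (e).

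Given Theorems~\ref{T:GYT1} and~\ref{T:G-indecomposability}, no serious difficulty remains; the two places needing a little care are the additivity/right-inverse argument identifying $S^{\lambda'}\otimes\sgn$ as indecomposable --- which is genuinely necessary, since Specht modules, unlike Young modules, need not be indecomposable when $p=2$, matching the hypothesis $p\geq 3$ --- and the verification that ${}^{\tau}(-)$ is compatible with restriction to $G_rT$, which is what lets one handle the costandard and injective cases indirectly.
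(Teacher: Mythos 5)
Your proof matches the paper's own argument essentially step for step: realize $\Delta(\lambda)$, $T(\lambda)$, $P(\lambda)$ as $\gm(N)$ for the appropriate $k\Sigma_d$-modules via Theorem~\ref{T:GYT1}, feed these into Theorem~\ref{T:G-indecomposability}(b), and transfer to $\nabla(\lambda)$ and $I(\lambda)$ by the Chevalley $\tau$-duality. The only difference is cosmetic: the paper simply cites the fact that Specht modules are indecomposable for $p\geq3$, whereas you rederive it from the right-inverse property $\fm\circ\gm\cong\mathrm{id}$ together with the simple head of $\Delta(\lambda)$ --- a correct and self-contained replacement for that citation.
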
 

\begin{proof} First note that the Specht module $S^{\lambda}$ is an indecomposable $k\Sigma_{d}$-module for $p\geq 3$ and the Young module $Y^{\lambda}$ is by definition indecomposable. Recall also the $\tau$-functor from Section~\ref{S:notation}.

(a) and (b): Since ${\nabla}(\lambda)=\Delta(\lambda)^{\tau}$, it suffice to prove the statement for $\Delta(\lambda)$. This follows 
by Theorem~\ref{T:GYT1} and Theorem~\ref{T:G-indecomposability} with $N=S^{\lambda^{\prime}}\otimes \text{sgn}$. (c): This follows by the same argument using Theorem~\ref{T:GYT1} and 
Theorem~\ref{T:G-indecomposability} with $N=Y^{\lambda^{\prime}}\otimes \text{sgn}$. 

(d) and (e): We can use the $\tau$ duality again. One has $P(\lambda)=I(\lambda)^{\tau}$; it suffice to prove the statement for $P(\lambda)$. 
Now apply Theorem~\ref{T:GYT1} and Theorem~\ref{T:G-indecomposability} with $N=Y^{\lambda}$. 
\end{proof}

 \subsection{Counterexamples for indecomposability when $m$ is less than $d$} \label{SS:NonIndEx} Corollary~\ref{C:GrIndec} does not hold all the time when $m<d$. One has the Andersen-Haboush isomorphism 
 (cf. \cite[II Prop. 3.19]{rags})
 $$
 \nabla((p^r - 1)\rho + p^r\gamma) \cong \St_r\otimes\nabla(\gamma)^{(r)}.
 $$
 This shows that this induced module decomposes into a direct sum of copies of $\St_r$ over $G_rT$.  A similar example holds for Weyl modules.  
 
 For a simple example, let $p = 3$, $r = 1$, and consider the $SL_3$-weight $\gamma = \omega_1$. Then $(p-1)\rho + 3\gamma = 5\omega_1 + 2\omega_2$.  Converting this to a partition gives 
 $(7,2,0)$, a 3-part partition of 9. In the above context, one has $m=3$ and $d=9$, where clearly $m < d$. 
 
Now suppose that $\mu \in (p^{r}-1)\rho + X_{r}$ and $T(\mu)$ is indecomposable as a $G_{r}T$-module. Then, for $\ga \in X_+$, by \cite[E.9 Lemma]{rags}
 $$T(\mu+p^{r}\gamma)\cong T(\mu)\otimes T(\gamma)^{(r)}.$$
For $\ga \neq 0$, this generates more counterexamples that do not involve the Steinberg module.   This formula is relevant when Donkin's Tilting Module Conjecture holds (see Section \ref{S:TMC}), namely when $\mu=2(p^{r}-1)\rho+w_{0}\lambda$ for $\lambda\in X_{r}$.

 \subsection{Premet's Theorem} Premet looked at the question of indecomposablity for Weyl modules for $G=SL_{2}$ when restricted to $G_{1}$. This can be applied to the induced 
 modules $\nabla(\lambda)$ where $\lambda\in X_{+}$. The following theorem summarizes 
 this situation in the rank one case by using \cite[Lemma 1.1]{Pr} and the analysis in Section~\ref{SS:NonIndEx}. 
  
 \begin{theorem} Let $G=SL_{2}$ and $\lambda\in X_{+}$. Then $\nabla(\lambda)$ restricted to $G_{1}T$ is 
 indecomposable if and only if one of the conditions hold:
 \begin{itemize} 
 \item[(a)] $p\nmid \lambda+1$,
 \item[(b)] $\lambda=p-1$. 
\end{itemize} 
\end{theorem}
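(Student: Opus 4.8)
The plan is to reduce the statement to the explicit decomposition of $\nabla(\lambda)|_{G_1T}$ over $SL_2$ and then invoke Premet's Lemma. Write $\lambda$ in the form $\lambda = \lambda_0 + p\lambda_1$ with $0 \le \lambda_0 \le p-1$ and $\lambda_1 \ge 0$; over $SL_2$ every dominant weight has such a decomposition with $\lambda_1 \in X_+$. First I would recall the Andersen--Haboush isomorphism from Section~\ref{SS:NonIndEx}, which in rank one reads $\nabla((p-1) + p\gamma) \cong \St_1 \otimes \nabla(\gamma)^{(1)}$; more generally, the structure of $\nabla(\lambda)|_{G_1T}$ for $SL_2$ is completely governed by the short exact sequences relating $\nabla(\lambda)$, $\nabla(\lambda_0) \otimes \nabla(\lambda_1)^{(1)}$, and lower terms. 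The key input is \cite[Lemma 1.1]{Pr}, which gives precisely the condition under which the $G_1$-module $\nabla(\lambda)$ (equivalently the $\mathfrak{sl}_2$-module) is indecomposable: it fails to be indecomposable exactly when $p \mid \lambda + 1$ and $\lambda \ne p-1$, in which case $\nabla(\lambda)|_{G_1}$ splits off a Steinberg summand (or splits according to the $\lambda_1$-factor).

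The argument then runs as follows. If $p \nmid \lambda+1$, then $\lambda_0 \ne p-1$, and Premet's Lemma shows $\nabla(\lambda)|_{G_1}$ — hence $\nabla(\lambda)|_{G_1T}$, since the $G_1T$-structure only refines the $G_1$-structure with a compatible torus grading and cannot split a $G_1$-indecomposable — is indecomposable. (One must check that indecomposability over $G_1$ forces indecomposability over $G_1T$; this is standard because $G_1T/G_1 \cong T/T_1$ is diagonalizable, so a $G_1T$-decomposition restricts to a $G_1$-decomposition.) If $\lambda = p-1$, then $\nabla(p-1) = L(p-1) = \St_1$ is simple as a $G_1$-module, hence certainly indecomposable over $G_1T$. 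Conversely, if $p \mid \lambda+1$ and $\lambda \ne p-1$, then $\lambda_0 = p-1$ and $\lambda_1 \ge 1$, so the Andersen--Haboush isomorphism gives $\nabla(\lambda)|_{G_1T} \cong \St_1 \otimes \nabla(\lambda_1)^{(1)}|_{G_1T} \cong \St_1^{\oplus \dim \nabla(\lambda_1)}$, which is a direct sum of more than one copy of $\St_1$ and hence decomposable; this is exactly the analysis of Section~\ref{SS:NonIndEx}.

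The main obstacle I anticipate is the bookkeeping in the $G_1T$ (rather than $G_1$) setting: one needs to be careful that the torus grading does not create extra decompositions beyond those visible over $G_1$, and that it does not obstruct the decomposition in the $p \mid \lambda+1$ case. The cleanest way to handle this is to observe that $\St_1 \otimes \nabla(\lambda_1)^{(1)}$ genuinely decomposes as a $G_1T$-module because the Frobenius-twisted factor $\nabla(\lambda_1)^{(1)}$ is trivial as a $G_1$-module but carries a nontrivial $T$-grading, so $\St_1 \otimes \nabla(\lambda_1)^{(1)}|_{G_1T}$ is a direct sum of $G_1T$-shifts of $\St_1$ indexed by the weights of $\nabla(\lambda_1)^{(1)}$ — still more than one summand. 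Everything else reduces to citing \cite[Lemma 1.1]{Pr} together with the elementary fact that $\nabla(\lambda) = L(\lambda)$ for $0 \le \lambda \le p-1$.
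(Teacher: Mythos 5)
Your proof is correct and follows essentially the same route the paper intends: combine \cite[Lemma 1.1]{Pr} (indecomposability over $G_1$ when $p\nmid\lambda+1$, hence over $G_1T$ since any $G_1T$-decomposition restricts to one over $G_1$) with the Andersen--Haboush isomorphism from Section~\ref{SS:NonIndEx} (explicit decomposition over $G_1T$ when $p\mid\lambda+1$ and $\lambda>p-1$), together with the trivial observation that $\nabla(p-1)=\St_1$ is simple. The paper gives exactly these two ingredients as its justification without spelling out the case analysis, so your proposal is a faithful expansion of the argument the authors have in mind.
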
 

As a corollary one can state an indecomposability criterion for standard modules for $S(2,d)$. 

\begin{cor} Let $S(2,d)$ be the Schur algebra for $GL_{2}$ and $\la = (\la_1,\la_2)$ be a partition of $d$. Then  $\nabla(\la)$ is indecomposable over $G_{1}T$ if and only if 
one of the following conditions hold:
\begin{itemize} 
 \item[(a)] $p\nmid (\lambda_{1}-\lambda_{2})+1$,
 \item[(b)] $\lambda_{1}-\lambda_{2}=p-1$.
\end{itemize} 
\end{cor}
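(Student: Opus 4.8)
The plan is to deduce this corollary from the preceding theorem about $\nabla(\lambda)$ for $G = SL_2$ by translating the parametrization of $S(2,d)$-modules into the language of $SL_2$-weights. For the Schur algebra $S(2,d)$, the relevant module is $\nabla(\la)$ with $\la = (\la_1,\la_2)$ a partition of $d$ into at most two parts; this is the degree-$d$ polynomial costandard $GL_2$-module of highest weight $(\la_1,\la_2)$. First I would recall that, restricted along $SL_2 \hookrightarrow GL_2$, a polynomial $GL_2$-module of highest weight $(\la_1,\la_2)$ becomes the $SL_2$-module $\nabla(\la_1 - \la_2)$, since the one-dimensional torus of $SL_2$ sees only the difference of the two coordinates. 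Thus $\nabla(\la)|_{SL_2} \cong \nabla(\la_1 - \la_2)$, and the $GL_2$-weight $\la$ corresponds to the $SL_2$-dominant weight $\mu := \la_1 - \la_2 \in X_+$.

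Next I would argue that the question of indecomposability over $(GL_2)_1 T$ for $\nabla(\la)$ is equivalent to the question of indecomposability over $(SL_2)_1 T$ for $\nabla(\mu)$. The point is that the extra central torus in $GL_2$ versus $SL_2$ acts by a single scalar character (namely the determinant to the power $d$) on the entire module $\nabla(\la)$, so it contributes nothing to the decomposition: a decomposition over $(GL_2)_1 T$ is precisely a decomposition over $(SL_2)_1 T$ together with this fixed central character, and conversely. More precisely, $(GL_2)_1 T_{GL_2}$-module decompositions of a module on which the central torus acts by a fixed character are in bijection with $(SL_2)_1 T_{SL_2}$-module decompositions of its $SL_2$-restriction. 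Hence $\nabla(\la)$ is indecomposable over $(GL_2)_1 T$ if and only if $\nabla(\mu)$ is indecomposable over $(SL_2)_1 T$.

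Finally I would invoke the preceding Premet-type theorem with $\lambda$ there replaced by $\mu = \la_1 - \la_2$: $\nabla(\mu)|_{(SL_2)_1 T}$ is indecomposable if and only if either $p \nmid \mu + 1$ or $\mu = p-1$. Substituting $\mu = \la_1 - \la_2$ yields exactly conditions (a) and (b) of the corollary. I expect the main (and really the only) obstacle to be making the weight-translation between $GL_2$ and $SL_2$ fully precise — in particular, being careful that ``indecomposable over $G_1 T$'' means $G_1 T$ with the respective maximal torus, and that passing between $GL_2$ and $SL_2$ does not lose or create summands because of the determinant character. Once that dictionary is in place, the result is an immediate consequence of the theorem; no further computation with symmetric groups or Schur functors is needed.
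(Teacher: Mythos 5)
Your proof is correct and is essentially the argument the paper leaves implicit: the corollary is stated without proof as a direct consequence of the preceding $SL_2$ theorem, and your translation $(\la_1,\la_2)\mapsto \la_1-\la_2$ together with the observation that the extra central torus of $GL_2$ acts by a single character on $\nabla(\la)$ (so cannot affect indecomposability over $G_1T$) is exactly the intended dictionary. One small slip worth fixing: the central $\mathbb{G}_m \subset GL_2$ acts on $\nabla(\la)$ by $z\mapsto z^{\la_1+\la_2}=z^d$, which is not ``the determinant to the power $d$'' (that would be $z\mapsto z^{2d}$); the point that it is a single fixed character is what matters, so the argument is unaffected.
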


\subsection{} In general, an interesting problem would be to determine for $G$ reductive when the module $\nabla(\lambda)$ is indecompoasble over $G_{r}$ where $\lambda\in X_{+}$. 
For $\lambda\in X_{+}$, one has $\lambda=\lambda_{0}+p^{r}\lambda_{1}$ where $\lambda_{0}\in X_{r}$ and $\lambda_{1}\in X_{+}$. 
Observe that 
$$\text{Hom}_{G_{r}}(L(\mu),\nabla(\lambda))\cong \text{ind}_{B/B_{r}}^{G/G_{r}}\ \text{Hom}_{B_{r}}(L(\mu),\lambda_{0}+p^{r}\lambda_{1}).$$ 
If $\mu\in X_{r}$ and $\lambda\in X_{r}$, then $\lambda_{1}=0$. This implies that $\text{soc}_{G_{r}}\nabla(\lambda)=L(\lambda)$ because the $B_{r}$-head of $L(\mu)$ is $\mu$. We can now make the following statement. 

\begin{prop} Let $G$ be a reductive algebraic group and $\lambda\in X_{r}$. Then $\nabla(\lambda)$ is indecomposable as a $G_{r}$-module. 
\end{prop}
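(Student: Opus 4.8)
The plan is to deduce indecomposability from the formally stronger assertion that $\soc_{G_{r}}\nabla(\lambda)$ is simple, which is essentially the content of the computation carried out just before the statement. I would first invoke the elementary fact that a nonzero module of finite length with simple socle is indecomposable: if such an $M$ splits as $M=M_{1}\oplus M_{2}$ with both summands nonzero, then each $M_{i}$ contains a simple submodule, so $\soc M=\soc M_{1}\oplus\soc M_{2}$ is a direct sum of two nonzero submodules, contradicting simplicity of $\soc M$. Since $\lambda\in X_{r}\subseteq X_{+}$, the module $\nabla(\lambda)$ is finite-dimensional and nonzero, so it suffices to show that $\soc_{G_{r}}\nabla(\lambda)=L(\lambda)$.

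To identify the $G_{r}$-socle I would run over the simple $G_{r}$-modules, namely the $L(\mu)$ with $\mu\in X_{r}$, noting that the multiplicity of $L(\mu)$ in $\soc_{G_{r}}\nabla(\lambda)$ equals $\dim\Hom_{G_{r}}(L(\mu),\nabla(\lambda))$. Applying the isomorphism displayed above, with $\lambda_{1}=0$ because $\lambda\in X_{r}$, yields $\Hom_{G_{r}}(L(\mu),\nabla(\lambda))\cong\operatorname{ind}_{B/B_{r}}^{G/G_{r}}\Hom_{B_{r}}(L(\mu),k_{\lambda})$, where $k_{\lambda}$ is the one-dimensional $B$-module of weight $\lambda$ restricted to $B_{r}$. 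Since $k_{\lambda}$ is a simple $B_{r}$-module, any nonzero $B_{r}$-homomorphism $L(\mu)\to k_{\lambda}$ factors through the $B_{r}$-head of $L(\mu)$; as $\rad_{B_{r}}L(\mu)$ is $B$-stable, this head is the one-dimensional $B$-module $k_{\mu}$. Hence, for $\mu,\lambda\in X_{r}$, the space $\Hom_{B_{r}}(L(\mu),k_{\lambda})$ vanishes unless $\mu=\lambda$, and when $\mu=\lambda$ the surjection $L(\lambda)\twoheadrightarrow k_{\lambda}$ is $B$-equivariant, so it spans a copy of the trivial $B/B_{r}$-module.

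Feeding this back in, $\Hom_{G_{r}}(L(\mu),\nabla(\lambda))=0$ for $\mu\neq\lambda$, while $\Hom_{G_{r}}(L(\lambda),\nabla(\lambda))\cong\operatorname{ind}_{B/B_{r}}^{G/G_{r}}k\cong k$ is one-dimensional; therefore $\soc_{G_{r}}\nabla(\lambda)\cong L(\lambda)$ is simple, and the first paragraph finishes the proof. I do not expect a genuine obstacle: the only non-formal ingredient is the identification of the $B_{r}$-head of $L(\mu)$ with $k_{\mu}$ for $\mu\in X_{r}$ (the one place the $p^{r}$-restrictedness is really used), and this is a standard fact, already invoked in the computation preceding the statement — ultimately a consequence of $L(\mu)$ being simple, hence generated over $\operatorname{Dist}_{r}(B)$ by its highest weight vector — so I would simply cite it rather than reprove it. The substantive content of the proposition is precisely the socle computation recorded just before its statement.
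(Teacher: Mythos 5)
Your proposal is correct and follows essentially the same route as the paper: both use the isomorphism $\Hom_{G_r}(L(\mu),\nabla(\lambda)) \cong \operatorname{ind}_{B/B_r}^{G/G_r}\Hom_{B_r}(L(\mu),\lambda_0 + p^r\lambda_1)$, specialize to $\lambda_1 = 0$, and invoke the fact that the $B_r$-head of $L(\mu)$ is the one-dimensional module of weight $\mu$ to conclude $\soc_{G_r}\nabla(\lambda) = L(\lambda)$. You merely spell out the elementary step that a finite-length module with simple socle is indecomposable, which the paper leaves implicit.
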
 

A natural question to ask is whether $\nabla(\lambda)$ is indecomposable if and only if $\lambda\in X_{r}$ or $\lambda$ is not in $(p^{r}-1)\rho+p^{r}X_{+}$? This would provide a natural generalization of 
Premet's result for $SL_{2}$.


\section{Socles of Tilting Modules}

\subsection{The Mullineux Correspondence}  Given a simple $k\Sigma_d$-module, tensoring with the sign representation yields another simple $k\Sigma_d$-module whose highest weight is given by the Mullineux map.  The Mullineux map, denoted $M_p(-)$, is an involution on the set of $p$-regular partitions of $d$ as defined for example in \cite[\S 4.2]{Mar}.   The connection with simple modules may be seen in one of two ways.  Given a $p$-regular partition $\si$, 
\begin{equation}\label{E:Mupper}
D^{\si}\otimes\sgn \cong D^{M_p(\si)}.
\end{equation}
Alternately, given a $p$-restricted partition $\si$,
\begin{equation}\label{E:Mlower}
D_{\si}\otimes\sgn \cong D_{M_p(\si')'}.
\end{equation}
Note that when the prime $p=2$, the Mullineux map is simply the identity.  With this in mind, the results in this section hold for an arbitrary prime $p$.

\subsection{Socles}  Let $\mu \in \Lambda^+(n,d)$ be a $p$-regular partition.  Our goal is to understand the $S(n,d)$-socle of $T(\mu)$.  In particular, one would like to understand when the socle is simple.  For $n \geq d$,  Kouwenhoven \cite{K} and Donkin \cite{D} showed that the socle of $\Delta(\mu)$ is $L(M_p(\mu)')$.  In Corollary \ref{C:socle}, we extend that result to some weights when $n < d$.  More generally, Theorem \ref{T:socle2} provides a general formula for the socle of $T(\mu)$ in terms of the compositions factors of of the modules $\gmn(L(\si))$.   To that end, we further investigate the functor $\gmn(-)$ in the next subsection.

\subsection{The $\gmn$-map}\label{S:Gmn}  In order to provide further information on the socle of $T(\mu)$, we need additional information the structure of   $\gmn(L(\si))$ for the map $\gmn$ from $\text{Mod}(S(n,d))$ 
to $\text{Mod}(S(m,d))$ where $m > n$.  Let $\bar{I}(\si)$ the injective hull of $\bar{L}(\si)$ in $\text{Mod}(S(m,d))$-mod. From \cite[Corollary 3.3]{DEN}, $\gmn(L(\si))$ may be identified as the largest submodule of $\bar{I}(\si)$ whose socle quotient has composition factors $\bar{L}(\mu)$ with $\fmn(\bar{L}(\mu)) = 0$. That is, as noted earlier, for any composition factor $\bar{L}(\mu)$ of $\gmn(L(\si))$with $\mu \neq \si$, the partition $\mu$ must have more than $n$-parts.  The following lemma illustrates a basic fact about partitions.

\begin{lemma}\label{L:parts} Let $\mu, \si \in \Lambda^+(d)$ with $\si$ having at most $n$-parts, i.e., $\si \in \Lambda^+(n,d)$. Assume that $\mu \geq \si$. Then $\mu$ has at most $n$-parts.
\end{lemma}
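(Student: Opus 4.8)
The plan is to use the dominance order directly on partition coordinates. Recall that for partitions $\mu,\sigma$ of the same integer $d$, the relation $\mu\geq\sigma$ means $\sum_{i=1}^{k}\mu_i\geq\sum_{i=1}^{k}\sigma_i$ for all $k\geq 1$. The hypothesis that $\sigma$ has at most $n$ parts says precisely that $\sigma_{n+1}=\sigma_{n+2}=\cdots=0$, so that $\sum_{i=1}^{n}\sigma_i=d$. I want to conclude $\mu_{n+1}=0$, which (since $\mu$ is a partition, hence weakly decreasing with nonnegative entries) is equivalent to $\sum_{i=1}^{n}\mu_i=d$.

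First I would apply the dominance inequality at $k=n$: since $\mu\geq\sigma$,
$$
\sum_{i=1}^{n}\mu_i \;\geq\; \sum_{i=1}^{n}\sigma_i \;=\; d.
$$
On the other hand, because $\mu$ is a partition of $d$ with nonnegative parts, the partial sum $\sum_{i=1}^{n}\mu_i$ is at most the total sum $\sum_{i\geq 1}\mu_i=d$. Combining the two inequalities forces $\sum_{i=1}^{n}\mu_i=d$, hence $\sum_{i>n}\mu_i=0$. As all parts of $\mu$ are nonnegative, every part beyond the $n$th vanishes, i.e.\ $\mu$ has at most $n$ parts.

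There is essentially no obstacle here; the only thing to be careful about is bookkeeping with the conventions, namely that ``$\sigma$ has at most $n$ parts'' is being used in the form $\sum_{i=1}^n\sigma_i=d$ and that dominance is the partial-sum order rather than entrywise order. Once those are pinned down the argument is the two-line computation above, and no deeper combinatorics (e.g.\ about conjugates or the Mullineux map) is needed.
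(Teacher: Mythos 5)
Your proof is correct and is essentially identical to the paper's: both apply the dominance inequality at $k=n$ to get $\sum_{i=1}^n\mu_i\geq d$, then use that $\mu$ is a partition of $d$ to force the remaining parts to vanish. You have simply spelled out the last step in slightly more detail.
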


\begin{proof} Write $\si = (\si_1,\si_2,\dots,\si_n)$ and $\mu = (\mu_1, \mu_2, \dots)$.   By the inequality assumption,
$$
\sum_{i = 1}^n\mu_i \geq \sum_{i = 1}^n\si_i = d.
$$
Since $\mu$ is a partition of $d$, this necessarily means $\mu_i = 0$ for $i > n$. In other words, $\mu$ has at most $n$ parts.
\end{proof}

The injective $S(m,d)$-module $\bar{I}(\si)$ admits a good filtration. That is, there exists a filtration
$$
0 = F_0 \subseteq F_1 \subseteq F_2 \subseteq \cdots \subseteq F_r = \bar{I}(\si),
$$
where, for each $1 \leq i \leq r$, $F_{i}/F_{i-1} \cong \nabla(\mu)$ for some partition $\mu$ (depending on $i$).  
From the general principle that
$$
[\bar{I}(\si) : \bar{\nabla}(\mu)] = [\bar{\nabla}(\mu) : \bar{L}(\si)].
$$
one concludes that $\mu \geq \si$ (for each $\mu$ appearing) and $\bar{\nabla}(\si)$ appears precisely once in the filtration (at the bottom).  In other words, $F_1/F_0 \cong \bar{\nabla}(\si)$ and $\mu > \si$ for $2 \leq i < r$.  We can now bound the image $\gmn(L(\si))$ inside a costandard module. 

\begin{prop}\label{P:GonL} Assume $m > n$ and $\si \in \Lambda^+(n,d)$. Then $\gmn(L(\si)) \subseteq \bar{\nabla}(\si)$.
\end{prop}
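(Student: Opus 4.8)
The plan is to combine the structural description of $\gmn(L(\si))$ as a submodule of the injective hull $\bar{I}(\si)$ with the good filtration analysis just carried out. Recall from \cite[Corollary 3.3]{DEN} that $\gmn(L(\si))$ is the largest submodule of $\bar{I}(\si)$ whose socle quotient has all composition factors $\bar{L}(\mu)$ with $\fmn(\bar{L}(\mu)) = 0$, i.e.\ with $\mu$ having more than $n$ parts. In particular $\gmn(L(\si))$ is a submodule of $\bar{I}(\si)$ containing $\bar{L}(\si)$ exactly once. The good filtration $0 = F_0 \subseteq F_1 \subseteq \cdots \subseteq F_r = \bar{I}(\si)$ has $F_1 \cong \bar{\nabla}(\si)$ at the bottom, and every higher layer $F_i/F_{i-1} \cong \bar{\nabla}(\mu_i)$ with $\mu_i > \si$.

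First I would argue that $\gmn(L(\si)) \cap F_1 = \gmn(L(\si))$, i.e.\ that $\gmn(L(\si))$ is contained in $F_1 \cong \bar{\nabla}(\si)$. Suppose not; then the image of $\gmn(L(\si))$ in the quotient $\bar{I}(\si)/F_1$ is nonzero. This quotient itself has a good filtration whose layers are the $\bar{\nabla}(\mu_i)$ with $\mu_i > \si$, so in particular $\bar{I}(\si)/F_1$ embeds into (a sum of) injective hulls of $\bar{L}(\mu_i)$'s — more simply, every composition factor $\bar{L}(\nu)$ of $\bar{I}(\si)/F_1$ satisfies $[\bar{\nabla}(\mu_i) : \bar{L}(\nu)] \neq 0$ for some $i$, hence $\nu \geq \mu_i > \si$. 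Now I invoke \cref{L:parts}: since $\si \in \Lambda^+(n,d)$ and $\nu \geq \si$ with $\nu$ a partition of $d$, the contrapositive shows that if $\nu$ had more than $n$ parts we would still need $\nu \geq \si$, which is fine — so this direction of the lemma alone does not immediately finish it. Let me reorganize.

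The cleaner route: let $N = \gmn(L(\si))$ and consider $\bar N := N/(N \cap F_1)$, a nonzero submodule of $\bar{I}(\si)/F_1$ if $N \not\subseteq F_1$. Pick a simple submodule $\bar{L}(\nu) \subseteq \bar N$. Since $\bar{L}(\nu)$ sits inside $\bar{I}(\si)/F_1$, and the socle of that quotient consists of composition factors $\bar{L}(\nu)$ occurring as submodules of the good-filtration quotients $\bar{\nabla}(\mu_i)$ with $\mu_i > \si$, we get $\nu \geq \mu_i > \si \geq \si$, so in particular $\nu > \si$, forcing $\nu \neq \si$. By the defining property of $\gmn(L(\si))$ from \cite[Corollary 3.3]{DEN} — any composition factor other than $\bar L(\si)$ has more than $n$ parts — $\nu$ has more than $n$ parts. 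But $\nu \geq \si$ and $\si \in \Lambda^+(n,d)$, so \cref{L:parts} forces $\nu$ to have \emph{at most} $n$ parts, a contradiction. Hence $N \cap F_1 = N$, that is, $\gmn(L(\si)) \subseteq F_1 \cong \bar{\nabla}(\si)$.

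The step I expect to be the main obstacle is pinning down precisely why a simple submodule $\bar L(\nu)$ of the quotient $\bar{I}(\si)/F_1$ must satisfy $\nu > \si$: one needs that the socle of a module with a good filtration whose layers $\bar\nabla(\mu_i)$ all have highest weights $> \si$ consists only of $\bar L(\nu)$ with $\nu$ satisfying $\bar L(\nu) \hookrightarrow \bar\nabla(\mu_i)$ for some $i$, hence $\nu \geq \mu_i$ since $\bar\nabla(\mu_i)$ has simple socle in degree... actually $\bar\nabla(\mu_i)$ need not have simple socle, but every composition factor $\bar L(\nu)$ of $\bar\nabla(\mu_i)$ has $\nu \leq \mu_i$ — wait, that is the wrong inequality. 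The correct fact is that $\bar\nabla(\mu_i)$ has \emph{simple head} $\bar L(\mu_i)$ in the sense of costandard modules having simple socle $\bar L(\mu_i)$; all other composition factors $\bar L(\nu)$ have $\nu < \mu_i$. So a simple submodule of $\bar\nabla(\mu_i)$ is exactly $\bar L(\mu_i)$. Consequently any $\bar L(\nu)$ in the socle of $\bar I(\si)/F_1$ equals some $\bar L(\mu_i)$, so $\nu = \mu_i > \si$, and the argument closes as above. I would make sure to state this socle-of-$\bar\nabla$ fact explicitly (costandard modules have simple socle equal to $\bar L$ of the highest weight) and cite \cite[II.2.3]{rags} or the Schur-algebra analogue, since it is the linchpin.
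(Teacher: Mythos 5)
Your proof is correct and follows essentially the same strategy as the paper's: embed $\gmn(L(\si))$ in the injective hull $\bar{I}(\si)$, use the good filtration with $\bar\nabla(\si)$ at the bottom, and rule out any composition factor above $F_1$ by pitting the ``more than $n$ parts'' constraint from the \cite[Corollary 3.3]{DEN} characterization against the ``at most $n$ parts'' constraint from Lemma~\ref{L:parts}. The only organizational difference is that the paper intersects the filtration with $M=\gmn(L(\si))$ and argues layer by layer, so it only ever needs that each individual costandard layer $\bar\nabla(\mu_i)$ has simple socle $\bar L(\mu_i)$; you quotient by $F_1$ all at once and must then invoke the (slightly stronger, but standard and provable by the same layer-by-layer induction) fact that any simple submodule of the good-filtered quotient $\bar I(\si)/F_1$ is of the form $\bar L(\mu_i)$ for one of the filtration highest weights $\mu_i>\si$ --- which you correctly identify as the linchpin and justify. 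One small caution: in your ``cleaner route'' you first write ``$\nu \geq \mu_i$'' before correcting to ``$\nu = \mu_i$''; the inequality $\nu\geq\mu_i$ does not follow from $[\bar\nabla(\mu_i):\bar L(\nu)]\neq 0$ (that gives $\nu\leq\mu_i$), so be sure to use the socle equality $\nu=\mu_i$, as you do in the final sentence.
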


\begin{proof} For convenience, set $M := \gmn(L(\si))$.  Since $M \subseteq \bar{I}(\si)$, the above good filtration on $\bar{I}(\si)$ induces a filtration on $M$:
$$
0 = M\cap F_0 \subseteq M\cap F_1 \subseteq \cdots \subseteq M\cap F_r = M.
$$
For $i \geq 2$, consider the embedding $(M\cap F_i)/(M\cap F_{i-1}) \hookrightarrow F_i/F_{i-1} \cong \bar{\nabla}(\mu)$, where $\mu > \si$. Note that $\mu$ depends on $i$.  Then
$$
\soc_{G} \left((M\cap F_i)/(M\cap F_{i-1})\right) \subseteq \soc_{G} \left(\bar{\nabla}(\mu)\right) = \bar{L}(\mu).
$$
If the lefthand socle is non-zero, then it must be $\bar{L}(\mu)$, meaning that $\bar{L}(\mu)$ must be a composition factor of $M\cap F_i$ and hence of $M$.   However, from Lemma \ref{L:parts}, $\mu$ has at most $n$-parts. As $\mu$ is not $\si$, $\bar{L}(\mu)$ cannot be a composition factor of $M$. Therefore, the socle and, hence, the module must be zero. That is $(M\cap F_i)/(M\cap F_{i-1}) = 0$ or $M\cap F_i = M\cap F_{i-1}$ for $2 \leq i \leq r$.  Thus $M = M\cap F_r = M\cap F_1$.   In other words, $M \subseteq F_1 \cong \bar{\nabla}(\si)$.
\end{proof}

For $\si \in \Lambda^+(n,d)$, it follows from a dual version of \cite[Thm. 4.5]{DEN} that $\gmn(\nabla(\si)) = \bar{\nabla}(\si)$.   Left exactness of $\gmn(-)$ immediately gives Proposition \ref{P:GonL}.

We make one last observation about the $\gmn$-functor.   In the following statement, the modules on the left side of the equality are $S(m,d)$-modules and those on the right are $S(d,d)$-modules. The subscripts on the $\bar{L}$-modules are used to explicitly denote this distinction.  

\begin{lemma}\label{L:comp} Given partitions $\si \in \Lambda^+(n,d)$ and $\nu \in \Lambda^+(m,d)$ with $n \leq m \leq d$, there is an equality of composition multiplicities:
$$
\left[\gmn(L(\si)) : \bar{L}_m(\nu)\right] = \left[\gdn(L(\si)) : \bar{L}_d(\nu)\right].
$$
\end{lemma}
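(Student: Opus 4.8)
The plan is to apply the lowering functor $\fdm\colon \operatorname{Mod}(S(d,d))\to\operatorname{Mod}(S(m,d))$ to the $S(d,d)$-module $\gdn(L(\si))$ and show that the result is precisely the $S(m,d)$-module $\gmn(L(\si))$. Since $\fdm$ is exact and, by Lemma~\ref{L:fstandard}, carries $\bar L_d(\tau)$ to $\bar L_m(\tau)$ when $\tau$ has at most $m$ parts and to $0$ otherwise, applying $\fdm$ to a composition series of $\gdn(L(\si))$ (a finite-dimensional module) immediately translates composition multiplicities of $S(d,d)$-simples with at most $m$ parts into composition multiplicities of $S(m,d)$-simples. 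So the whole lemma reduces to the functor identity $\fdm\circ\gdn\cong\gmn$.

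To prove that identity I would first fix compatible lowering idempotents: as in Section~\ref{S:Lowering}, realize $S(m,d)\cong\epsilon_m S(d,d)\epsilon_m$ and $S(n,d)\cong\epsilon_n S(d,d)\epsilon_n$ with $\epsilon_n,\epsilon_m\in S(d,d)$ sums of weight idempotents, chosen so that $\epsilon_n\epsilon_m=\epsilon_m\epsilon_n=\epsilon_n$ (possible since $n\le m\le d$). Then $\fdn\cong\fmn\circ\fdm$, since both are multiplication by $\epsilon_n=\bar\epsilon_n\epsilon_m$, where $\bar\epsilon_n$ is the image of $\epsilon_n$ in $S(m,d)$. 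Taking right adjoints and using uniqueness of adjoints gives $\gdn\cong\gdm\circ\gmn$, and combining this with the right-inverse property $\fdm\circ\gdm\cong\operatorname{id}$ recalled in Section~\ref{S:Lowering} yields
$$\fdm\circ\gdn\;\cong\;\fdm\circ\gdm\circ\gmn\;\cong\;\gmn.$$
Alternatively one can argue directly with bimodules: for an $S(n,d)$-module $M$ one has $\fdm(\gdn(M))=\epsilon_m\operatorname{Hom}_{S(n,d)}(\epsilon_n S(d,d),M)\cong\operatorname{Hom}_{S(n,d)}(\epsilon_n S(d,d)\epsilon_m,M)$, and $\epsilon_n S(d,d)\epsilon_m=\epsilon_n\bigl(\epsilon_m S(d,d)\epsilon_m\bigr)\cong\bar\epsilon_n S(m,d)$ as $(S(n,d),S(m,d))$-bimodules, so this is $\gmn(M)$.

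With $M=L(\si)$ the proof is then complete: using exactness of $\fdm$, Lemma~\ref{L:fstandard}, and the fact that $\nu\in\Lambda^+(m,d)$ is the unique partition $\tau$ with $\fdm(\bar L_d(\tau))\cong\bar L_m(\nu)$, one gets
$$\left[\gmn(L(\si)):\bar L_m(\nu)\right]=\left[\fdm(\gdn(L(\si))):\bar L_m(\nu)\right]=\left[\gdn(L(\si)):\bar L_d(\nu)\right].$$
The only step requiring genuine care is the functor identity $\fdm\circ\gdn\cong\gmn$: the argument is formal, but one must make sure the lowering idempotents for the triple $S(n,d),S(m,d),S(d,d)$ can be chosen nested, so that either the factorization $\fdn\cong\fmn\circ\fdm$ (and hence the uniqueness-of-adjoints argument) or the bimodule identification $\epsilon_n S(d,d)\epsilon_m\cong\bar\epsilon_n S(m,d)$ genuinely holds. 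Everything after that is bookkeeping and does not touch the internal structure of $L(\si)$ at all.
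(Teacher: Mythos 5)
Your proof is correct, and it takes a genuinely cleaner route than the paper. The paper proves the lemma by first embedding both $\gmn(L(\si))$ and $\gdn(L(\si))$ into the costandard modules $\bar{\nabla}_m(\si)$ and $\bar{\nabla}_d(\si)$ via Proposition~\ref{P:GonL}, invoking the equality $[\bar{\nabla}_m(\si) : \bar{L}_m(\nu)] = [\bar{\nabla}_d(\si) : \bar{L}_d(\nu)]$ from Levi theory, and then giving a two-directional argument (one direction using the factorization $\gdn = \gdm\circ\gmn$) that the composition factors ``survive'' in corresponding ways on both sides. You instead prove the single functor identity $\fdm\circ\gdn\cong\gmn$ outright, by chaining the adjunction factorization $\gdn\cong\gdm\circ\gmn$ (which requires only nestedness of the lowering idempotents, a point you correctly flag) with the right-inverse property $\fdm\circ\gdm\cong\operatorname{id}$, or equivalently by the bimodule computation; the multiplicity equality then drops out of exactness of $\fdm$ and Lemma~\ref{L:fstandard}. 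This avoids Proposition~\ref{P:GonL} and the Levi argument entirely, replaces the paper's informal ``survives'' step with an isomorphism of functors, and the identity $\fdm\circ\gdn\cong\gmn$ is in fact more general, valid on all of $\operatorname{Mod}(S(n,d))$ rather than just simples.
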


\begin{proof} By standard properties of Levi subgroups, using similar subscript notation, one has  
$$
\left[\bar{\nabla}_m(\si): \bar{L}_{m}(\nu)\right] = \left[\bar{\nabla}_d(\si) : \bar{L}_d(\nu)\right].
$$
From Proposition \ref{P:GonL}, $\gmn(L(\si)) \subseteq \bar{\nabla}_m(\si)$ and $\gdn(L(\si)) \subseteq \bar{\nabla}_d(\si)$. Recall from Lemma \ref{L:fstandard} that $\fdm(\bar{\nabla}_d(\si)) = \bar{\nabla}_m(\si)$.   If $\bar{L}_d(\nu)$ appears in $\gdn({\nabla}_d(\si))$, then there is a composition series for $\bar{\nabla}_d(\si)$ with highest weights (other than $\si$) lying below $\bar{L}_d(\nu)$ having more than $n$ parts.   Applying $\fdm$ gives a composition series for $\bar{\nabla}_m(\si)$ with the same property, and hence, $\bar{L}_m(\nu)$ appears in $\gmn(L(\si))$. Conversely, since $\gdn(L(\si)) = \gdm\circ\gmn(L(\si))$, if $\bar{L}_m(\nu)$ appears in $\gmn(L(\si))$, that factor will ``survive'' upon lifting further to $\gdn(L(\si))$.
\end{proof}

If one knows the module structure of $\bar{\nabla}(L(\si))$, one can compute the module $\gmn(L(\si))$.   The necessary module structure may be obtained (in small ranks) with the use of GAP \cite{GAP} and the Weyl module package written by S. Doty \cite{Doty}.    In looking at examples, $\gmn(L(\si))$ generally seems to be significantly smaller than $\bar{\nabla}(\si)$. Indeed, it is often simply $\bar{L}(\si)$.  Although it can be all of $\bar{\nabla}(\si)$.   This is illustrated in the Appendix \ref{S:appendix}, where the $S(4,12)$-module structure of $\bar{\nabla}(\si)$ is presented for all 3-part partitions $\si$ of 12 with $p = 3$. The values of the functor 
$\mathcal{G}^4_3 : \text{Mod}(S(3,12)) \to \text{Mod}(S(4,12))$ (i.e., from $SL_3$-modules to $SL_4$-modules) on $L(\si)$ are identified.

\subsection{The socle in general}\label{S:socgen} We now provide a general method to determine the socle of an indecomposable $S(n,d)$-tilting module with $p$-regular highest weight.  To that end, we first recall that certain tilting modules can be identified as injective hulls (equivalently, projective covers). The lemma below is a slight reformulation of \cite[Lemma 3.3]{DDV}.

\begin{lemma}[Donkin-De Visscher]\label{L:inj} Assume $p \geq 2$.  Let $\mu$ be a $p$-regular partition in $\Lambda^+(d,d)$ for some $d$. As an $S(d,d)$-module, $T(\mu)$ is the injective hull of $L(M_p(\mu)')$.  
\end{lemma}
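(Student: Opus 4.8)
The plan is to show that, as an $S(d,d)$-module, $T(\mu)$ is the projective cover $P(M_p(\mu)')$ of $L(M_p(\mu)')$; the stated form of the lemma then follows at once, since the Chevalley duality $\tau$ of Section~\ref{S:notation} fixes every simple module, whence $P(\nu)^{\tau}\cong I(\nu)$ for all $\nu$, while $T(\mu)^{\tau}\cong T(\mu)$ because tilting modules are $\tau$-self-dual, so that $T(\mu)\cong T(\mu)^{\tau}\cong P(M_p(\mu)')^{\tau}\cong I(M_p(\mu)')$. When $p=2$ the Mullineux map is the identity and $\sgn$ is trivial, so $M_p(\mu)'=\mu'$ and the claim reduces to \cite[Lemma 3.3]{DDV}; I will therefore assume $p\geq 3$.

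For $p\geq 3$ I would begin from Theorem~\ref{T:GYT1}(a), applied with $m=d$ (legitimate since $\mu\in\Lambda^+(d,d)$), which gives an isomorphism of $S(d,d)$-modules $T(\mu)\cong\gd(Y^{\mu'}\otimes\sgn)$. The key step is then the isomorphism of $k\Sigma_d$-modules
$$
Y^{\mu'}\otimes\sgn\;\cong\;Y^{M_p(\mu)'}.
$$
Granting it, Theorem~\ref{T:GYT1}(c) (again with $m=d$, valid already for $p\geq 2$) yields $T(\mu)\cong\gd(Y^{M_p(\mu)'})\cong P(M_p(\mu)')$, and the first paragraph completes the argument.

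To obtain the displayed isomorphism I would argue on the symmetric group side. Since $\mu$ is $p$-regular, $\mu'$ is $p$-restricted; it is standard (see, e.g., \cite{Mar}) that the indecomposable projective $k\Sigma_d$-modules are precisely the Young modules $Y^\kappa$ with $\kappa$ $p$-restricted, each having simple top $D_\kappa$ (the identification $\operatorname{top}Y^\kappa=D_\kappa$ also drops out by applying the exact Schur functor $\fd$ to $P(\kappa)\twoheadrightarrow L(\kappa)$). Now $-\otimes\sgn$ is an exact self-equivalence of $\Mod(k\Sigma_d)$, so $Y^{\mu'}\otimes\sgn$ is again an indecomposable projective $k\Sigma_d$-module, and its top is $D_{\mu'}\otimes\sgn\cong D_{M_p((\mu')')'}=D_{M_p(\mu)'}$ by \eqref{E:Mlower}. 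Since $M_p(\mu)$ is $p$-regular, $M_p(\mu)'$ is $p$-restricted, so $Y^{M_p(\mu)'}$ is the indecomposable projective with that same top, forcing $Y^{\mu'}\otimes\sgn\cong Y^{M_p(\mu)'}$. This last isomorphism is the main obstacle: it is the point at which the combinatorics of the Mullineux correspondence for Young modules is used, and it is essentially the content of \cite[Lemma 3.3]{DDV} (equivalently, of the symmetric group facts underlying it); the remaining steps (the $\tau$-duality reduction and the two invocations of Theorem~\ref{T:GYT1}) are formal.
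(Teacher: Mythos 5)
The paper does not actually prove this lemma: it is introduced as ``a slight reformulation of \cite[Lemma 3.3]{DDV}'' and the citation is the entire justification. Your argument therefore supplies a derivation where the paper offers none, and it is correct. The $\tau$-duality reduction from the injective to the projective statement is fine, since $\tau$ is a contravariant anti-equivalence on $\operatorname{Mod}(S(d,d))$ fixing simple and tilting modules, so $P(\nu)^{\tau}\cong I(\nu)$ and $T(\mu)^{\tau}\cong T(\mu)$. Applying Theorem~\ref{T:GYT1}(a) and (c) with $m=d$ then reduces the claim to the $k\Sigma_d$-isomorphism $Y^{\mu'}\otimes\sgn\cong Y^{M_p(\mu)'}$, and your derivation of that isomorphism is sound: the indecomposable projective $k\Sigma_d$-modules are exactly the Young modules $Y^{\kappa}$ with $\kappa$ $p$-restricted, with simple top $D_{\kappa}$; $-\otimes\sgn$ is an exact self-equivalence; and \eqref{E:Mlower} identifies the resulting top as $D_{M_p(\mu)'}$. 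You are right to flag that this Young-module fact is precisely the substance of \cite[Lemma 3.3]{DDV}, so what you have produced is an unpacking of the citation through the Schur/inverse-Schur machinery of Section~2 rather than an independent proof, and correctly so. Falling back on \cite{DDV} directly for $p=2$ is also appropriate, since Theorem~\ref{T:GYT1}(a) is only available for $p\geq 3$.
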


We can now provide information on the number of times an irreducible module appears in the socle of a tilting module. 

\begin{theorem}\label{T:socle2} Let $\mu \in \Lambda^+(n,d)$ be $p$-regular. Set $m$ to be the number of parts of $M_p(\mu)'$.   Given $\si \in \Lambda^+(n,d)$, the number of times that $L(\si)$ appears in the $S(n,d)$-socle of $T(\mu)$ is 
$$
\left[\gmn(L(\si)) : \bar{L}(M_p(\mu)')\right].
$$
\end{theorem}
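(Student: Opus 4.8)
The plan is to turn the socle multiplicity into a $\Hom$-space over $S(n,d)$, push that space through two adjunctions (first $\fn\dashv\gn$, then $\fd\dashv\gd$) until it becomes a $\Hom$-space over $S(d,d)$ into the tilting module $\bar T_d(\mu)$, and then invoke the Donkin--De Visscher identification of $\bar T_d(\mu)$ with an injective hull (\cref{L:inj}) to read off a composition multiplicity. Throughout I would assume $n\le d$ (if $n>d$ then $\Lambda^+(n,d)=\Lambda^+(d,d)$ and $S(n,d)$ is Morita equivalent to $S(d,d)$), and I would take $m$ to be the number of parts of $M_p(\mu)'$ as in the statement, noting $M_p(\mu)'\in\Lambda^+(m,d)$ and $m\le d$; in the degenerate case $m<n$ one simply replaces $m$ by $n$ below (using $\mathcal{G}^n_n=\operatorname{id}$). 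The one elementary fact I would record first: for a finite-dimensional algebra over $k$, if $I(\nu)$ denotes the injective hull of the simple module $L(\nu)$, then $\dim\Hom(Z,I(\nu))=[Z:L(\nu)]$ for every finite-dimensional $Z$ (immediate from exactness of $\Hom(-,I(\nu))$ and $\dim\Hom(L(\tau),I(\nu))=\delta_{\tau\nu}$).

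Since $[\soc_{S(n,d)}T(\mu):L(\si)]=\dim\Hom_{S(n,d)}(L(\si),T(\mu))$, the heart of the proof is the chain
\begin{align*}
\Hom_{S(n,d)}(L(\si),T(\mu))
&\cong\Hom_{S(n,d)}\bigl(L(\si),\,\gn(Y^{\mu'}\otimes\sgn)\bigr)&&\text{by \cref{T:GYT}}\\
&\cong\Hom_{k\Sigma_d}\bigl(\fn(L(\si)),\,Y^{\mu'}\otimes\sgn\bigr)&&(\fn\dashv\gn)\\
&\cong\Hom_{k\Sigma_d}\bigl(\fd(\gdn(L(\si))),\,Y^{\mu'}\otimes\sgn\bigr)&&\text{by \cref{P:fn}}\\
&\cong\Hom_{S(d,d)}\bigl(\gdn(L(\si)),\,\gd(Y^{\mu'}\otimes\sgn)\bigr)&&(\fd\dashv\gd)\\
&\cong\Hom_{S(d,d)}\bigl(\gdn(L(\si)),\,\bar T_d(\mu)\bigr),&&\text{by \cref{T:GYT1}}
\end{align*}
where \cref{P:fn} is applied with auxiliary rank equal to $d$ (legitimate since $d\ge n$ and $d\ge d$), and the last line uses $T(\mu)\cong\gd(Y^{\mu'}\otimes\sgn)$ as $S(d,d)$-modules.

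To conclude, \cref{L:inj} says that $\bar T_d(\mu)$, viewed as an $S(d,d)$-module, is the injective hull of $\bar L_d(M_p(\mu)')$; combining this with the elementary fact above gives $\Hom_{S(d,d)}\bigl(\gdn(L(\si)),\bar T_d(\mu)\bigr)\cong[\gdn(L(\si)):\bar L_d(M_p(\mu)')]$. Finally \cref{L:comp}, applicable since $n\le m\le d$ and $M_p(\mu)'\in\Lambda^+(m,d)$, rewrites this composition multiplicity over $S(d,d)$ as $[\gmn(L(\si)):\bar L_m(M_p(\mu)')]$, which is exactly the claimed number of copies of $L(\si)$ in $\soc_{S(n,d)}T(\mu)$.

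The step needing the most care is the bookkeeping: keeping straight the two distinct adjoint pairs ($\fn\dashv\gn$ between $S(n,d)$- and $k\Sigma_d$-modules, $\fd\dashv\gd$ between $S(d,d)$- and $k\Sigma_d$-modules) and verifying the rank hypotheses in \cref{P:fn} and \cref{T:GYT1} for the auxiliary choice $m=d$. The genuinely decisive input is \cref{L:inj}, which is what converts a $\Hom$ into a tilting module into a composition multiplicity of a simple; the rest is formal adjunction shuffling. For $p=2$ the same chain runs with the routine modifications that $\sgn$ is trivial and $M_p$ is the identity, using the projective-module statements of \cref{T:GYT1} and \cref{T:GYT} in place of their tilting-module statements.
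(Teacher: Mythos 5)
Your proof is correct, but it takes a genuinely different (and longer) route than the paper's. The paper's proof is one short adjunction: write $T(\mu) = \fdn(\bar T(\mu))$ via \cref{L:fstandard}, pass to $\Hom_{S(d,d)}(\gdn(L(\si)),\bar T(\mu))$ by adjointness of the pair $(\fdn,\gdn)$, then apply \cref{L:inj} and \cref{L:comp}. You instead detour through $k\Sigma_d$: two applications of the Schur/inverse-Schur adjunctions ($\fn\dashv\gn$ and $\fd\dashv\gd$) bridged by \cref{P:fn}, using \cref{T:GYT} and \cref{T:GYT1} to identify the tilting module with $\gn(Y^{\mu'}\otimes\sgn)$ on each side. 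Both arguments converge on the same key intermediate object $\Hom_{S(d,d)}(\gdn(L(\si)),\bar T_d(\mu))$ and finish identically with \cref{L:inj} and \cref{L:comp}. Your route has one upside: it uses the $\fn\dashv\gn$ and $\fd\dashv\gd$ adjunctions exactly in the direction the paper defines them, so there is no ambiguity about variance — whereas the paper's single ``by adjointness'' step reads as if $\gdn$ were a left adjoint of $\fdn$, while the paper actually defines $\gmn$ as the right adjoint, a point that requires a little care (or a duality argument) to justify. The downside is that your chain leans on \cref{T:GYT} and \cref{T:GYT1}(a), which the paper states only for $p\geq 3$, while \cref{T:socle2} is asserted for all primes. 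Your closing remark about $p=2$ — ``use the projective-module statements of \cref{T:GYT1} and \cref{T:GYT}'' — is too quick: what one actually needs is that over $S(d,d)$ one has $T(\mu) \cong I(\mu') \cong P(\mu')$ when $p=2$, which does follow from \cref{L:inj} together with the self-duality of $T(\mu)$ under $\tau$, but that observation should be spelled out. The paper's route avoids this entirely, since \cref{L:fstandard}, \cref{L:inj}, and \cref{L:comp} all hold uniformly for $p\geq 2$.
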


\begin{proof} Consider an $S(n,d)$-module $L(\si)$.  Then
\begin{align*}
\Hom_{S(n,d)}\left(L(\si),T(\mu)\right)
	&= \Hom_{S(n,d)}\left(L(\si),\fdn(\bar{T}(\mu)\right) \text{ by \ref{L:fstandard}}\\
	&\cong \Hom_{S(d,d)}\left(\gdn(L(\si)),\bar{T}(\mu)\right) \text{ by adjointness}\\
	&\cong \Hom_{S(d,d)}\left(\gdn(L(\si)),I(M_p(\mu)')\right) \text{ by Lemma \ref{L:inj}}.
\end{align*}
Hence, the number of times that $L(\si)$ appears in the $S(n,d)$-socle of $T(\mu)$ is 
$$
\left[\gdn(L(\si)) : \bar{L}(M_p(\mu)')\right].
$$
The claim follows from Lemma \ref{L:comp}.
\end{proof}

The following gives us a criterion for the socle of $T(\mu)$ to be simple.

\begin{cor}\label{C:socle} Let $\mu \in \Lambda^+(n,d)$ be a $p$-regular partition and assume that $M_p(\mu)' \in \Lambda^+(n,d)$ (i.e., has at most $n$ parts).  Then the $S(n,d)$-socle of $T(\mu)$ is $L(M_p(\mu)')$.  
\end{cor}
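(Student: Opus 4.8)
The plan is to deduce this immediately from Theorem~\ref{T:socle2}. Set $m$ to be the number of parts of $M_p(\mu)'$; by hypothesis $m \leq n$, so in fact the relevant functor $\gmn$ is the lowering-functor right inverse $\mathcal{G}^n_m$ going \emph{up} from $S(m,d)$ to $S(n,d)$, and the formula of Theorem~\ref{T:socle2} for the multiplicity of $L(\si)$ in $\soc_{S(n,d)} T(\mu)$ becomes $[\mathcal{G}^n_m(L(\si)) : \bar{L}(M_p(\mu)')]$, where now $\bar{L}$ denotes the $S(n,d)$-module.

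First I would observe that, by Corollary~\ref{C:fnD}/the discussion following \eqref{E:Fequiv}, for any $\si \in \Lambda^+(m,d)$ the module $\mathcal{G}^n_m(L(\si))$ contains $\bar{L}(\si)$ exactly once as a composition factor, and every other composition factor $\bar{L}(\nu)$ has $\nu$ with strictly more than $m$ parts. Hence $[\mathcal{G}^n_m(L(\si)) : \bar{L}(M_p(\mu)')]$ is $1$ if $\si = M_p(\mu)'$, and otherwise can only be nonzero if $M_p(\mu)'$ has more than $m$ parts --- but $M_p(\mu)'$ has exactly $m$ parts by the choice of $m$, a contradiction. So the multiplicity is $1$ when $\si = M_p(\mu)'$ and $0$ for all other $\si \in \Lambda^+(n,d)$; note that $M_p(\mu)' \in \Lambda^+(n,d)$ by the standing assumption, so it is a legitimate index. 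Therefore $\soc_{S(n,d)} T(\mu) = L(M_p(\mu)')$.

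One technical point to verify carefully is that Theorem~\ref{T:socle2} is genuinely applicable when $m \leq n$: its statement sets $m$ to be the number of parts of $M_p(\mu)'$ and then invokes $\gmn$, and the notation $\gmn$ was introduced in Section~\ref{S:Lowering} under the hypothesis $m \geq n$, with $\mathcal{G}^m_n$ the right inverse of the lowering functor $\mathcal{F}^m_n$. When $m < n$ one instead uses $\mathcal{G}^n_m : \Mod(S(m,d)) \to \Mod(S(n,d))$, and the proof of Theorem~\ref{T:socle2} goes through verbatim with the roles of $m$ and $n$ adjusted (the chain $S(m,d) \to \cdots \to S(d,d)$ still makes sense since $m \leq n \leq d$, and Lemma~\ref{L:comp} applies with $m$ replaced by whichever of $m,n$ is smaller). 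I expect the main --- indeed only --- obstacle to be this bookkeeping about which functor points which way; the representation-theoretic content is entirely contained in the single-multiplicity statement for $\mathcal{G}$ applied to a simple, which has already been established.
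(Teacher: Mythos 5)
Your approach is close to the paper's in spirit (apply Theorem~\ref{T:socle2} and exploit that composition factors of the inverse Schur functor applied to a simple have many parts), but there is a genuine bookkeeping gap in how you invoke the theorem, and the flip of $m$ and $n$ you propose does not actually work.

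The roles of $m$ and $n$ in Theorem~\ref{T:socle2} are not symmetric: $n$ is the rank of the ambient Schur algebra, so the simple modules $L(\si)$ in the theorem range over \emph{all} $\si \in \Lambda^+(n,d)$, while $m$ merely tracks where $M_p(\mu)'$ lives. After setting $m$ to be the (possibly small) number of parts of $M_p(\mu)'$, your argument only treats $\si \in \Lambda^+(m,d)$ and then concludes ``\dots and $0$ for all other $\si \in \Lambda^+(n,d)$''; but when $m < n$ there are partitions $\si$ with more than $m$ and at most $n$ parts, and for those $\mathcal{G}^n_m(L(\si))$ is not even defined (there is no $S(m,d)$-module $L(\si)$). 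Your claim that the proof of Theorem~\ref{T:socle2} goes through ``verbatim with the roles of $m$ and $n$ adjusted'' and that Lemma~\ref{L:comp} ``applies with $m$ replaced by whichever of $m,n$ is smaller'' does not hold, since Lemma~\ref{L:comp} needs $n \leq m \leq d$.

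The correct (and simpler) resolution, which is essentially the paper's, is to take $m = n$ when applying Theorem~\ref{T:socle2}: the hypothesis $M_p(\mu)' \in \Lambda^+(n,d)$ guarantees this is a legitimate choice, and then $\gmn = \mathcal{G}^n_n$ is the identity functor, so the multiplicity is $[L(\si) : L(M_p(\mu)')] = \delta_{\si,\,M_p(\mu)'}$ for all $\si \in \Lambda^+(n,d)$ at once. Equivalently, one can bypass Lemma~\ref{L:comp} entirely: the intermediate step of the proof of Theorem~\ref{T:socle2} already gives the socle multiplicity as $[\gdn(L(\si)) : \bar{L}_d(M_p(\mu)')]$, and since $\gdn(L(\si))$ contains $\bar{L}(\si)$ exactly once with every other composition factor having more than $n$ parts (Corollary~\ref{C:fnD}), while $M_p(\mu)'$ has at most $n$ parts, this multiplicity is $\delta_{\si,\,M_p(\mu)'}$.
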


\begin{proof} Consider $m$ and $n$ as in Theorem \ref{T:socle2}.  In this scenario, we have $m = n$.   Thus $\gmn(L(\si)) = \mathcal{G}_n^n(L(\si)) = L(\si)$ and $\bar{L}(M_p(\mu)') = L(M_p(\mu)')$.   Hence, the only $L(\si)$ appearing in the socle is $\si = M_p(\mu)'$ and precisely once.
\end{proof}

Recall that the Mullineux map is an involution on the set of $p$-regular partitions and conjugation gives a one-to-one correspondence between $p$-regular weights and $p$-restricted weights.   As such, Corollary \ref{C:socle} not only gives a condition on the simplicity of the socle of $T(\mu)$ but also tells us that the highest weight of that simple module is $p$-restricted.

\subsection{Examples} The following examples illustrate Theorem \ref{T:socle2} and Corollary \ref{C:socle} over a Schur algebra, as well as how this may be applied to $SL_n$.  

\subsubsection{} As an application of Corollary \ref{C:socle} in a situation with $n < d$, consider the algebra $S(3,5)$ with $p = 3$ and the partition $(5) \in \Lambda^+(3,5)$.   Then $M_3((5))' = (3,2)' = (2,2,1)$ has 3 parts.  Corollary~\ref{C:socle} says that as an $S(3,5)$-module (or, more generally, for an $S(n,5)$-module with $n \geq 3$), the socle of $T(5)$ is $L(2,2,1)$.  As an $SL_3$-module (i.e., type $A_2$), this is equivalent to the socle of $T(5\omega_1)$ being $L(\omega_{2})$.   

While the corollary does not apply to $S(2,5)$ (or $SL_2)$, note that the module $T(5)$ over $S(2,5)$ (or $T(5\omega_1)$ over $SL_2$) also has simple socle, since $T(5)$ is in fact simple.  One could also apply Theorem \ref{T:socle2} to compute the $S(2,5)$-socle of $T(5)$.

\subsubsection{} This next example demonstrates how Theorem \ref{T:socle2} may be used to conclude simplicity of a socle. Let $p = 3$ and consider the partition $(8,4)$ as a 3-part partition of 12.   Observe that $M_3((8,4))' = (4,4,2,2)$.  Since this has 4 parts, Corollary \ref{T:socle2} does not apply to $T(8,4)$ over $S(3,12)$.   However,  from Appendix \ref{S:appendix}, one finds that $\si = (4,4,4)$ is the only $\si \in \Lambda^+(3,12)$ for which $\bar{L}(4,4,2,2)$ appears as a composition factor of $\mathcal{G}^4_3(L(\si))$.   From Theorem \ref{T:socle2}, we may conclude that the $S(3,12)$-socle of $T(8,4)$ is $L(4,4,4)$.   Equivalently, this is the statement that the $SL_3$-socle of $T(4\omega_1 + 4\omega_2)$ is $k$; a fact known to hold since Donkin's Tilting Module Conjecture (see Section \ref{S:TMC}) is valid for $SL_3$.

\subsubsection{} The socle of a tilting module need not always be simple, as will be demonstrated in this example.  Let $p = 3$ and consider the tilting module $T(3,2,1)$ over $S(4,6)$.  Then $M_3((3,2,1))' = (5,1)' = (2,1,1,1,1)$ which has 5 parts, and so Corollary~\ref{C:socle} does not apply over $S(4,6)$ (or $S(n,6)$ with $n \leq 4$).  Over $S(5,6)$ (or any $S(m,6)$ with $m \geq 5$), Corollary \ref{C:socle} does apply, saying that the socle of $T(3,2,1)$ is $L(2,1,1,1,1)$.   Translating to $SL_5$ (i.e., type $A_4$), this says the $SL_5$-socle of $T(\omega_{1}+\omega_{2}+\omega_{3})$ is $L(\omega_{1})$.

Considering $T(3,2,1)$ as an $S(4,6)$-module, we may apply Theorem \ref{T:socle2}.
To do so, we must investigate the structure of $\mathcal{G}_4^5(L(\si))$ for all $\si \in \Lambda^+(4,6)$.  Using GAP \cite{GAP} and S. Doty's Weyl module program \cite{Doty}, one finds that $L(2,1,1,1,1)$ appears (once) in $\mathcal{G}_4^5(L(2,2,2))$ and (once) in $\mathcal{G}_4^5(L(3,1,1,1)) = \bar{\nabla}(3,1,1,1)$.   Hence, the $S(4,6)$-socle of $T(3,2,1)$ is $L(2,2,2)\oplus L(3,1,1,1)$. In particular, it is not simple.  
This is equivalent to saying that the $SL_4$-socle (i.e., type $A_3$) of $T(\omega_{1}+\omega_{2}+\omega_{3})$ is $L(2\omega_{3})\oplus L(2\omega_{1})$.  Indeed, one could apply the Jantzen Sum Formula to directly show that the Weyl module $\Delta(\omega_{1}+\omega_{2}+\omega_{3})$ has socle $L(2\omega_{1})\oplus L(2\omega_{3})$. 

If one considers the tilting module $T(3,2,1)$ in $\text{Mod}(S(3,6))$, one finds that the socle is in fact simple.   As above, we may apply Theorem \ref{T:socle2}.   Again using GAP \cite{GAP},  one may compute $\mathcal{G}_3^5(L(\si))$ for all $\si \in \Lambda^+(3,6)$.  One finds that $\bar{L}(2,1,1,1,1)$ appears as a composition factor only in $\mathfrak{G}_3^5(L(2,2,2)) = \bar{\nabla}(2,2,2)$ (and precisely once).  Hence, the $S(3,6)$-socle of $T(3,2,1)$ must be $L(2,2,2)$.  Over $SL_3$, this is equivalent to saying that the $SL_3$-socle (i.e., type $A_2$) of $T(\omega_{1}+\omega_{2})$ is $k$.


\section{Applications}

Returning to the representation theory of $SL_n$, in this section we provide two significant applications of our preceding results: one to the simplicity of certain Weyl modules and one to Donkin's Tilting Module Conjecture. We begin with some observations on weights and partitions that will be needed to apply our main results.  The results of this section again hold for arbitrary primes.

\subsection{Connecting weights and partitions} We first recall that the definitions of a $p$-restricted weight and a $p$-restricted partition are interchangeable.

\begin{lemma}\label{L:pres} Let $\la \in X_1$.  When considered as a partition, $\la$ is $p$-restricted.
\end{lemma}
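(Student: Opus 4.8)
The plan is simply to unwind the two notions of ``$p$-restricted'' and check that they coincide. Since $G$ is of type $A$, say of rank $n-1$ so that $G$ is (essentially) $GL_n$ or $SL_n$, every dominant weight may be identified with a partition $\la = (\la_1 \ge \la_2 \ge \cdots \ge \la_n \ge 0)$ (in the $SL_n$ case one takes the representative with $\la_n = 0$), and under this identification the simple coroot pairings are given by the familiar type-$A$ formula
$$
\langle \la, \alpha_i^\vee\rangle = \la_i - \la_{i+1}, \qquad 1 \le i \le n-1,
$$
which we may quote from the standard references (e.g.\ \cite{rags}, \cite{Mar}).

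First I would record what membership in $X_1$ says: by definition $\la \in X_1$ precisely when $0 \le \langle\la,\alpha_i^\vee\rangle \le p-1$ for every simple coroot $\alpha_i^\vee$, i.e.\ for $1 \le i \le n-1$. Substituting the displayed formula, this is exactly the assertion that $0 \le \la_i - \la_{i+1} < p$ for $1 \le i \le n-1$. Next I would dispose of the one remaining clause in the definition of a $p$-restricted partition recalled in Section~\ref{S:sym}, namely $0 \le \la_n < p$: this is immediate from the normalization $\la_n = 0$ in the $SL_n$ case, and is part of the restricted box in the $GL_n$ convention fixed in Section~\ref{S:notation}. Combining the two conditions gives precisely the definition of a $p$-restricted partition, which finishes the proof.

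There is no real obstacle here; the entire content is the standard dictionary for type $A$ between coroot coordinates and consecutive differences of partition parts. The only point deserving a sentence of care is the bottom coordinate $\la_n$ together with the associated normalization of weights as partitions, which is why I would make the partition convention explicit at the outset before invoking the pairing formula.
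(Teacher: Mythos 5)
Your proposal is correct and follows essentially the same route as the paper: translate the defining inequalities $0 \le \langle\la,\alpha_i^\vee\rangle \le p-1$ of $X_1$ into consecutive differences of partition parts via the type-$A$ formula $\langle\la,\alpha_i^\vee\rangle = \la_i - \la_{i+1}$, and note that the last part of the chosen partition representative is $0$. The paper phrases this in fundamental-weight coordinates $\la = \sum \la_i\omega_i$ and then writes out the associated partition explicitly, while you work directly in partition coordinates — a cosmetic difference only. One small caveat: your appeal to ``the restricted box in the $GL_n$ convention fixed in Section~\ref{S:notation}'' is a bit more than that section actually commits to; the paper's proof simply fixes the partition representative with last part $0$ (the $SL_n$ normalization), which is also what the applications in Section 5 ($G = SL_n$) require. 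Stating that normalization outright, as you do for the $SL_n$ case, is the cleaner move.
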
 

\begin{proof}  Let $\la = \sum_{i=1}^{n-1}\la_i \omega_{i}$ (expressed in terms of fundamental weights in type $A_{n-1}$), then the associated partition (with $n$ parts) is $\mu = (\la_1 + \cdots + \la_{n-1}, \la_1 + \cdots + \la_{n-2},\dots,\la_1 + \la_2,\la_1,0)$.  For each $1 \leq i \leq n-1$, $\mu_i - \mu_{i-1} = \la_i < p$ by assumption. Moreover, $\mu_n = 0 < p$.
\end{proof} 

Next, in order to apply Theorem \ref{T:socle2} and Corollary \ref{C:socle}, we need to know that certain weights are $p$-regular.

\begin{lemma}\label{L:preg} Let $\la \in X_1$.  As a partition, $\hat{\la}$ is strictly decreasing.  Hence, it is $p$-regular for any $p$.
\end{lemma}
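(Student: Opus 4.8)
The plan is to compute $\hat\la$ explicitly in terms of fundamental weights, convert it to a partition exactly as in the proof of Lemma~\ref{L:pres}, and then read off that consecutive parts decrease by at least $p-1$. Write $\la=\sum_{i=1}^{n-1}\la_i\omega_i$ in type $A_{n-1}$; the hypothesis $\la\in X_1$ gives $0\le\la_i\le p-1$ for every $i$. Using the standard action $w_0\omega_i=-\omega_{n-i}$ of the longest Weyl group element on fundamental weights, together with $\rho=\sum_{i=1}^{n-1}\omega_i$, one gets
$$
\hat\la=2(p^r-1)\rho+w_0\la=\sum_{i=1}^{n-1}\bigl(2(p^r-1)-\la_{n-i}\bigr)\omega_i,
$$
so every coefficient of $\hat\la$ relative to the fundamental weights lies in the interval $[\,p^r-1,\ 2(p^r-1)\,]$, and in particular is at least $p-1\ge 1$ (recall $\la_i\le p-1\le p^r-1$).

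Next I would apply the partition recipe recalled in the proof of Lemma~\ref{L:pres}: if $\hat\la=\sum_i b_i\omega_i$, then the associated $n$-part partition $\nu$ satisfies $\nu_n=0$ and $\nu_j-\nu_{j+1}=b_{n-j}$ for $1\le j\le n-1$. Here $b_{n-j}=2(p^r-1)-\la_j$, which is at least $p-1\ge 1$, so
$$
\nu_1>\nu_2>\cdots>\nu_{n-1}>\nu_n=0,
$$
that is, $\hat\la$ is strictly decreasing as a partition. A strictly decreasing partition has no two equal consecutive nonzero parts — let alone $p$ of them — so it is $p$-regular for every prime $p$, which is the claim.

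There is no real obstacle here: the proof is a short, self-contained bookkeeping argument. The only point requiring care is internal consistency of conventions — the chosen representative of $\rho$, the normalization of the partition attached to a dominant weight, and the direction in which $w_0$ permutes the fundamental weights — all of which are already fixed in Section~\ref{S:notation} and the proof of Lemma~\ref{L:pres}.
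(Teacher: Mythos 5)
Your proof is correct and follows essentially the same line as the paper's: both compute the consecutive differences $\mu_j - \mu_{j+1}$ of the partition attached to $\hat\la$ and observe that $p$-restrictedness of $\la$ forces each difference to be at least $p-1\ge 1$. The only cosmetic difference is that you route the computation through the fundamental-weight coordinates and the Lemma~\ref{L:pres} conversion recipe, whereas the paper writes $\rho$ and $w_0\la$ directly as $n$-tuples; the arithmetic is the same.
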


\begin{proof} As a partition with $n$ parts, $\rho = (n-1, \dots, 2, 1, 0)$. Treating $\la$ as a partition, write $\la = (\la_1, \dots, \la_n)$. By assumption and the previous lemma, $\la$ is $p$-restricted.  For convenience, let $\mu$ denote the weight $\hat{\la}$ as a partition. Then
$$
\mu = 2(p-1)\rho + w_0\la = (2(p-1)(n-1) + \la_n, 2(p-1)(n-2) + \la_{n-1}, \dots, 2(p-1) + \la_2,\la_1).
$$
For each $1 \leq i \leq n-1$, 
\begin{align*}
\mu_i - \mu_{i+1} &= 2(p-1)(n-i) + \la_{n-i+1} - (2(p-1)(n-i-1) + \la_{n-i}) \\
	&= 2(p-1) + \la_{n - i + 1} - \la_{n-i}\\ 
	&= p - 2 + [p - (\la_{n-i} - \la_{n-i+1})] \\
	& > p - 2 \text{ since } \la \text{ is } p\text{-restricted}\\
	&\geq 0.
\end{align*}
Hence, $\mu_i > \mu_{i+1}$ as claimed.
\end{proof} 

\subsection{Partial Steinberg modules}\label{S:partial}  Over any $G$, the first Steinberg module $\St_1 = L((p-1)\rho)$ is well known to be tilting. That is, 
$$\St_1 = L((p-1)\rho) = \nabla((p-1)\rho) = \Delta((p-1)\rho) = T((p-1)\rho).$$   Consider what one might call a ``partial Steinberg'' module  $\Delta((p-1)\omega_1 + \cdots + (p-1)\omega_i)$ for $1 \leq i \leq n-1$.  Over $SL_n$, one may use Corollary \ref{C:socle} to show that such modules (and some slight generalizations thereof) are always simple (and hence tilting).

We begin with some observations on the nature of Mullineux operation for certain partitions.  Given a rational number $q$, let $\lceil q \rceil$ denote the least integer  greater than or equal to $q$.

\begin{prop}\label{P:mulllength}  Let $\la =(\la_1, \la_2, \dots , \la_n)$ be a partition of $d$ satisfying 
\begin{itemize}
\item[(a)] $\la_n > 0,$
\item[(b)] $\la_i - \la_{i+1} \geq p-1$ for $i=1, \dots , n-1.$
\end{itemize}
Then $M_p(\la)'$ is a $p$-restricted partition of $d$ of length $\displaystyle{ \lceil \la_1/(p-1) \rceil.}$  Moreover, assume that $\la_n \leq p-1$ and $\la_i - \la_{i+1} = p-1$ for $i = 1, \dots, n-1$.  Then $M_p(\la)' = \la$.
\end{prop}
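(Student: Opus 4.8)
The statement is about the Mullineux map applied to partitions $\la$ with large gaps between consecutive parts, so the natural approach is to use an explicit combinatorial description of $M_p(-)$, e.g. via the Mullineux symbol (the rim-hook/$p$-rim algorithm described in \cite[\S 4.2]{Mar}). The plan is to compute $M_p(\la)$ directly for partitions satisfying (a) and (b), then transpose. First I would observe that conditions (a) and (b) force $\la$ to be $p$-regular (all parts are distinct since $\la_i - \la_{i+1} \geq p-1 \geq 1$, using $p \geq 2$), so $M_p(\la)$ is defined. Then I would peel off $p$-rims: because each part exceeds the next by at least $p-1$, the first $p$-rim of $\la$ consists of a predictable number of boxes, and removing it again yields a partition of the same shape-type (gaps still at least $p-1$, possibly with one fewer part or a shortened first part). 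The key numeric claim is that the Mullineux symbol has column lengths that, when assembled into $M_p(\la)'$, produce a $p$-restricted partition whose length is exactly $\lceil \la_1/(p-1)\rceil$ — the $\lceil\cdot\rceil$ arising because the last $p$-rim may be a partial one of size $< p-1$ when $\la_1$ is not divisible by $p-1$.

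For the "moreover" clause, where $\la_n \leq p-1$ and all gaps are exactly $p-1$, the partition is the staircase-like shape $\la = (\la_n + (n-1)(p-1),\, \la_n + (n-2)(p-1),\, \dots,\, \la_n + (p-1),\, \la_n)$. Here I expect the $p$-rim extraction to be especially clean: each successive $p$-rim removal strips off exactly one "layer," and the Mullineux algorithm becomes transparently self-inverse on this family. One efficient route is to show directly that $\la' $ is itself $p$-restricted (which follows from the gap and size conditions: $\la'$ has parts that jump by controlled amounts, and $\la_n \leq p-1$ controls the tail), and then invoke the characterization that for $p$-restricted $\mu$ one has $M_p(\mu')' $ relating $D_\mu \otimes \sgn$ to $D_{M_p(\mu')'}$ via \eqref{E:Mlower}; combined with the length count from the first part and the involutivity of $M_p$, the fixed-point claim $M_p(\la)' = \la$ should drop out. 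Alternatively — and perhaps more cleanly — one shows the Mullineux symbol of $\la$ is symmetric under the transpose-type operation exactly when the gaps are all $p-1$ and the last part is small, which is a finite combinatorial check on the symbol.

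**Main obstacle.** The hard part will be carrying out the $p$-rim bookkeeping carefully enough to extract the precise length $\lceil \la_1/(p-1)\rceil$ rather than something off by one, particularly tracking what happens at the "corner" where the last (possibly partial) $p$-rim is removed and where a part of size exactly a multiple of $p-1$ versus not makes the difference. A secondary subtlety is ensuring $M_p(\la)'$ is genuinely $p$-restricted and not merely $p$-regular after transposing; this requires knowing the conjugate of the Mullineux output has bounded gaps, which again comes down to the structure of the Mullineux symbol columns. I would isolate these two points as lemmas: one computing the Mullineux symbol of any $\la$ satisfying (a), (b) explicitly, and one reading off the transpose. Once the symbol is in hand, both assertions — the length formula and the fixed-point statement — are immediate consequences, so the entire difficulty is front-loaded into that explicit computation.
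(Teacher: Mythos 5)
Your plan is essentially the paper's approach: exploit the large-gap structure to make an explicit $p$-rim algorithm for the Mullineux map tractable, then count the number of steps until the algorithm terminates. The one difference in setup is that the paper passes to the conjugate $\la'$ --- which is $p$-restricted precisely because of condition (b) --- and applies Xu's algorithm as formulated by Brundan--Kujawa, where each step removes exactly $p-1$ dots from every column of $\la'$ except possibly the rightmost; the process visibly terminates after $\lceil\la_1/(p-1)\rceil$ steps (when column $1$ is exhausted), giving the length. Your proposal to work with the Mullineux symbol of $\la$ directly is the transpose-dual version of the same computation and would also succeed, but you may find the column structure of $\la'$ easier to track since condition (b) translates into a clean statement about consecutive column heights. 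One correction: the ``secondary subtlety'' you flag --- verifying that $M_p(\la)'$ is $p$-restricted rather than merely $p$-regular --- is not a subtlety at all, since $M_p$ is a bijection on $p$-regular partitions, $M_p(\la)$ is automatically $p$-regular, and conjugating a $p$-regular partition always yields a $p$-restricted one; the only content is the length formula. For the ``moreover'' clause, the paper does not argue via involutivity or symmetry of the Mullineux symbol; it simply computes that the number of dots removed from $\la'$ at the $i$th step equals $\la_i$, which immediately gives $M_p(\la)' = \la$. Your symmetric-symbol route is plausible but would require a separate argument; the direct step-count is shorter.
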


\begin{proof} Observe that such a partition is certainly $p$-regular.  We work with Xu's algorithm (cf.~\cite{Xu}) as described in \cite[Section 6]{BK}. Note that they work with $p$-restricted partitions. We therefore apply their algorithm to the conjugate weight $\la'.$  Condition (2) implies that $\la'$ is $p$-restricted. The $i$th column of $\la'$ contains $\la_i$ dots. Since $\la_i -\la_{i+1} \geq p-1$ the $p$-segments which make up the $p$-rim consist of the bottom $p$ dots in each column, except possibly in the most right column, where the $p$-segment could contain the entire column. At each step of Xu's algorithm, we are removing the $p$-rim with the exception of the $p$th node of each $p$-segment. This results in removing exactly $p-1$ dots from each column, except possibly the most right one, where all the dots could be removed. 

In any case, after the removal, the resulting partition is the conjugate of $(\la_1-(p-1), ... ,\la_{n-1}-(p-1), \max\{\la_n -(p-1),0\}).$  If the last entry is zero we remove it. In any case we obtain a new partition which still satisfies conditions (a) and (b). The process terminates after the first column vanishes which is exactly after $\displaystyle{ \lceil \la_1/(p-1) \rceil}$ many steps.

Consider now the special case. Then $\la_1 = (n-1)(p-1) + \la_n$ and so $\displaystyle{ \lceil \la_1/(p-1) \rceil} = n$. From Xu's algorithm, $M_p(\la) = (\mu_1, \mu_2, \dots, \mu_n)$, where $\mu_i$ is the number of nodes removed at the $i$th step.  At the first step, precisely $\la_n$ nodes will be removed in the last column and so a total of $(n-1)(p - 1) + \la_n$ nodes will be removed.  That is, $\la_1$ nodes will be removed.   Similarly, one sees that $(n-2)(p-2) + \la_{n-1} = \la_2$ nodes will be removed at the second step. Continuing in this manner, one sees that $M_p(\la)' = (\la_1,\la_2,\dots,\la_n) = \la$.
\end{proof}

From Proposition~\ref{P:mulllength} and Corollary \ref{C:socle} we obtain immediately:

\begin{cor} Let $\la =(\la_1, \la_2, \dots , \la_n)$ be a partition of $d$ satisfying 
\begin{itemize}
\item[(a)] $\la_n > 0,$
\item[(b)] $\la_i - \la_{i+1} \geq p-1$ for $i=1, \dots, n-1.$
\end{itemize}
Set $\displaystyle{s= \lceil \la_1/(p-1)\rceil}.$
Then, for $m \geq s,$ the $S(m,d)$-socle of the tilting module $T (\la)$ is $L(M_p(\la)'),$ a simple $p$-restricted module. Moreover, assume that $\la_n \leq p-1$ and $\la_i - \la_{i+1} = p-1$ for $i = 1, \dots, n-1$.  Then $T(\la) \cong L(\la)$.
\end{cor}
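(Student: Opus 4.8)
The plan is to obtain this as a direct consequence of \cref{P:mulllength} and \cref{C:socle}, supplemented by a short standard argument for the final isomorphism. Before applying \cref{C:socle} over $S(m,d)$ one must check that both $\la$ and $M_p(\la)'$ have at most $m$ parts. For $M_p(\la)'$ this is immediate from \cref{P:mulllength}, which gives it length exactly $s\le m$. For $\la$ itself, note that hypotheses (a) and (b) force
$\la_1 = \la_n + \sum_{i=1}^{n-1}(\la_i-\la_{i+1}) \ge 1 + (n-1)(p-1)$,
whence $\la_1/(p-1) > n-1$ and therefore $s = \lceil\la_1/(p-1)\rceil \ge n$; so $m \ge s \ge n$ and $\la \in \Lambda^+(n,d)\subseteq\Lambda^+(m,d)$, and $T(\la)$ over $S(m,d)$ makes sense.

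With this in hand, \cref{P:mulllength} tells us $\la$ is $p$-regular and that $M_p(\la)'$ is a $p$-restricted partition of $d$ lying in $\Lambda^+(m,d)$, so \cref{C:socle} (with its ``$n$'' taken to be $m$) applies verbatim and yields $\soc_{S(m,d)}T(\la) = L(M_p(\la)')$, a simple $p$-restricted module. This is the first assertion.

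For the ``Moreover'' clause, the extra hypotheses $\la_n \le p-1$ and $\la_i-\la_{i+1} = p-1$ are precisely those of the last sentence of \cref{P:mulllength}, giving $M_p(\la)' = \la$ and hence $\soc T(\la) = L(\la)$. To upgrade this to $T(\la)\cong L(\la)$ I would use standard highest-weight-category facts: $T(\la)$ contains $\Delta(\la)$ as a submodule, so $\soc\Delta(\la) \subseteq \soc T(\la) = L(\la)$, forcing $\soc\Delta(\la) = L(\la)$; since $\Delta(\la)$ also has simple head $L(\la)$ with $[\Delta(\la):L(\la)] = 1$ and is indecomposable, it must equal $L(\la)$. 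Applying the contravariant duality $\tau$ then gives $\nabla(\la) = L(\la)$ as well, so $L(\la) = \Delta(\la) = \nabla(\la)$ is a tilting module; being indecomposable of highest weight $\la$, it is $T(\la)$.

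I do not anticipate a genuine obstacle here: the statement really is ``immediate'' once \cref{P:mulllength} and \cref{C:socle} are in place. The only points needing a line of care are the inequality $s \ge n$, which is what lets one apply \cref{C:socle} over $S(m,d)$ at all, and the final deduction that a tilting module whose socle equals its highest-weight simple must itself be simple, which is elementary but not literally quoted among the earlier results.
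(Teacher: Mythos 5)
Your proof is correct and matches the paper's intent exactly: the paper states this corollary follows "immediately" from Proposition~\ref{P:mulllength} and Corollary~\ref{C:socle} and gives no further details, and your write-up simply fills in the two routine verifications left implicit there (that $m \geq s \geq n$ so both $\la$ and $M_p(\la)'$ lie in $\Lambda^+(m,d)$, and the standard highest-weight-category argument that $\soc T(\la) = L(\la)$ forces $\Delta(\la) = \nabla(\la) = L(\la) = T(\la)$).
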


The following is a translation of this corollary into the algebraic group setting.

\begin{cor}
Assume  $G = SL_{n+1}$ and $\la=\sum_{i=1}^{n}\la_i \omega_{i} \in X_1$ with $\la_n > 0$.  Set $t = \langle \la, \alpha_0^{\vee} \rangle$ and 
$\displaystyle{s= \lceil t/(p-1)\rceil+n - 1}.$
For $ m \geq s,$ let $\mu = \sum_{i=1}^{n}\mu_i \omega_{i}$ be the $\text{SL}_{m+1}$-weight
defined by
$$
\mu_i = \begin{cases}
p - 1 +\la_i & \mbox{ for } 1 \leq i \leq n-1, \\
\la_n & \mbox{ for } i = n,\\
0 & \mbox{ for } n+1 \leq i \leq m.
\end{cases} 
$$
Then the $\text{SL}_{m+1}$-modules  $T(\mu)$ and $\Delta(\mu)$ will have a simple socle with $p$-restricted highest weight.
Moreover, if $\la_1 = \la_2 = \cdots = \la_{n-1} = 0$, then $T(\mu) \cong \Delta(\mu) \cong L(\mu).$
\end{cor}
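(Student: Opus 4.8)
The plan is to reduce everything to the preceding (partition-theoretic) corollary, translating the $SL_{m+1}$-weight $\mu$ into a partition and passing through the standard equivalence between polynomial $GL_{m+1}$-modules of degree $d$ and $S(m+1,d)$-modules. Writing $\la=(\la_1,\dots,\la_n)$ for the coefficients of $\la$ against the fundamental weights, the hypothesis $\la\in X_1$ gives $0\le\la_i\le p-1$ for every $i$ by Lemma~\ref{L:pres}, while $\la_n>0$ by assumption. Since $\langle\omega_i,\alpha_0^\vee\rangle=1$ for each $i$ in type $A_n$, one has $t=\langle\la,\alpha_0^\vee\rangle=\la_1+\cdots+\la_n$. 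The $SL_{m+1}$-weight $\mu=\sum_{i=1}^m\mu_i\omega_i$ lifts to the partition $\pi=(\pi_1,\dots,\pi_{m+1})$ with $\pi_{m+1}=0$ and $\pi_k-\pi_{k+1}=\mu_k$; because $\mu_i=0$ for $i>n$, this $\pi$ has exactly $n$ nonzero parts and is a partition of $d:=\pi_1+\cdots+\pi_n$, with $\pi_i-\pi_{i+1}=p-1+\la_i\ge p-1$ for $1\le i\le n-1$ and $\pi_n=\la_n>0$. Thus $\pi$ satisfies hypotheses (a) and (b) of the preceding corollary, and $\pi_1=\sum_{i=1}^n\mu_i=(n-1)(p-1)+t$, so $\lceil\pi_1/(p-1)\rceil=(n-1)+\lceil t/(p-1)\rceil=s$. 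I would also observe that $m\ge s$ forces $m\ge n$ (since $\la_n\ge 1$ makes $s\ge n$), so $\mu$ is a genuine $SL_{m+1}$-weight, and $m+1\ge s$, so the preceding corollary applies to $\pi$ over $S(m+1,d)$.

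Applying it, the $S(m+1,d)$-socle of $T(\pi)$ is the simple module $L(M_p(\pi)')$, where $M_p(\pi)'$ is a $p$-restricted partition of length $s\le m+1$. Restriction from $GL_{m+1}$ to $SL_{m+1}$ leaves unchanged the submodule lattice of a polynomial module of fixed degree (there the centre of $GL_{m+1}$ acts by a scalar) and carries standard, costandard, tilting, and simple modules to the corresponding $SL_{m+1}$-modules; hence $T(\pi)|_{SL_{m+1}}=T(\mu)$, and $L(M_p(\pi)')$ restricts to a simple module whose highest weight is $p$-restricted (again via the partition/weight dictionary of Lemma~\ref{L:pres}). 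So $\soc_{SL_{m+1}}T(\mu)$ is this simple module. For the Weyl module I would invoke the standard fact that $\Delta(\mu)$ embeds into $T(\mu)$ as the bottom term of a Weyl filtration (see \cite{rags}); since the socle of a submodule is contained in the socle of the ambient module and $\Delta(\mu)\ne 0$, its socle is a nonzero submodule of the simple module $\soc T(\mu)$, hence equal to it.

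Finally, if $\la_1=\cdots=\la_{n-1}=0$, then $\pi_i-\pi_{i+1}=p-1$ for $1\le i\le n-1$ and $\pi_n=\la_n\le p-1$, which is exactly the additional hypothesis in the second half of the preceding corollary; it follows that $T(\pi)\cong L(\pi)$, i.e.\ $T(\mu)\cong L(\mu)$ over $SL_{m+1}$, and then $\Delta(\mu)\hookrightarrow T(\mu)=L(\mu)$ forces $\Delta(\mu)\cong L(\mu)$ as well. The only real work is the weight-to-partition bookkeeping of the first paragraph — verifying $t=\sum_i\la_i$, computing $\pi_1$, and checking that $M_p(\pi)'$ has at most $m+1$ parts — after which the statement is immediate from the preceding corollary together with standard facts relating $GL$ to $SL$ and about Weyl filtrations of tilting modules; I do not anticipate a genuine obstacle.
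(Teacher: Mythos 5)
Your proof is correct and follows essentially the same route as the paper: translate $\mu$ to a partition, verify hypotheses (a), (b) and the bound on the number of parts via $\pi_1=(n-1)(p-1)+t$, then apply the preceding partition-level corollary over $S(m+1,d)$ and transport back through the standard $GL/SL$ dictionary. You make explicit a few points the paper leaves implicit — the check that $m\ge s$ forces $m\ge n$, the computation of $\pi_1$ (the paper's own proof writes $\lceil\mu_1/(p-1)\rceil$, evidently a slip for $\lceil\sigma_1/(p-1)\rceil$), and the deduction for $\Delta(\mu)$ from $\Delta(\mu)\hookrightarrow T(\mu)$ — but the core argument is the same.
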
 

\begin{proof} Note that $\displaystyle{t = \langle \la, \alpha_0^{\vee} \rangle = \sum_{i = 1}^n\la_i}$.  Consider the partition $\si = (\si_1, \si_2, \dots, \si_n)$ defined by $\displaystyle{\si_i = \sum_{j=i}^n\mu_j}$.  Then $T(\mu)$ as an $SL_{m+1}$-module for $m \geq n$ is equivalent to considering $T(\si)$ as an $S(m+1,d)$-module (for appropriate $d$).   The claims follow from the preceding corollary noting that the $s$ therein is given by $s= \lceil \mu_1/(p-1)\rceil = n - 1 + \lceil t/(p-1)\rceil$.
\end{proof}

For type $A$, using Proposition \ref{P:SocInd} (below), one also obtains an equivalent to the  well-known result \cite[II Lemma 11.10]{rags}.

\begin{cor} Assume  $G = SL_{n+1}$ and $\la \in X_1$ with $\langle \la, \alpha_0^{\vee} \rangle \leq p-1.$
Then the $G$-module $T((p-1)\rho + \la)$ is indecomposable as a $G_1T$-module, 
\end{cor}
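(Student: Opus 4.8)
The plan is to reduce the indecomposability of $T((p-1)\rho + \la)$ over $G_1T$ to the simplicity of its $G_1$-socle, and then to deduce that simplicity from Corollary~\ref{C:socle} (equivalently, from Proposition~\ref{P:mulllength}). First I would invoke Proposition~\ref{P:SocInd}, which (as advertised) relates the $G_1T$-indecomposability of a tilting module with highest weight in $(p-1)\rho + X_1$ to a statement about its socle: a module with simple $G_1$-socle is automatically indecomposable over $G_1T$, so it suffices to show that $\soc_{G_1} T((p-1)\rho + \la)$ is simple. Here one uses that $(p-1)\rho + \la$ is $p$-restricted when $\la \in X_1$ and $\langle \la, \alpha_0^\vee\rangle \leq p-1$: indeed, writing things out in fundamental weights, the coefficients of $(p-1)\rho + \la$ are $p - 1 + \la_i$, and the restriction hypothesis on $\la$ forces $p - 1 + \la_i \leq p - 1 + (p-1) $, but in fact one needs the individual coefficients to be $\leq p-1$, which is where $\langle \la,\alpha_0^\vee\rangle \le p-1$ (forcing each $\la_i$ to be small, in fact all but possibly one $\la_i$ equal to $0$ when the sum is $\le p-1$... more precisely each $\la_i \le p-1$ and at most one is nonzero) comes in; the $p$-regularity of $(p-1)\rho+\la$ likewise follows as in Lemma~\ref{L:preg}.

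Next I would translate to the Schur algebra picture: with $G = SL_{n+1}$ and the weight $(p-1)\rho + \la$ converted to a partition $\si$ with $n+1$ parts, the $G$-module $T((p-1)\rho+\la)$ corresponds to the tilting module $T(\si)$ over $S(m,d)$ for suitable $m, d$. Then I would verify that $\si$ satisfies hypotheses (a) and (b) of Proposition~\ref{P:mulllength}: condition (b), $\si_i - \si_{i+1} \geq p-1$, holds because the fundamental-weight coefficient of $(p-1)\rho + \la$ in position $i$ is $p - 1 + \la_i \geq p-1$; condition (a), $\si_{n+1} > 0$... here one must be a little careful since the last part of the partition of a restricted weight is $0$, but the relevant partition for $SL_{n+1}$ has $n+1$ parts and one works modulo $\mathbb{Z}$-multiples of $(1,\dots,1)$, so one can arrange positivity of the last entry by adding a suitable scalar. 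Granting this, Proposition~\ref{P:mulllength} and Corollary~\ref{C:socle} give that $\soc_{S(m,d)} T(\si) = L(M_p(\si)')$ is simple (and $p$-restricted), which is the simplicity of the $G$-socle, hence of the $G_1$-socle by the standard comparison (the $G$-socle sits inside the $G_1$-socle, but here one rather wants: the $G_1$-socle of a module whose highest weight is $p$-restricted detects simplicity via Proposition~\ref{P:SocInd}). Finally, simple $G_1$-socle implies $G_1T$-indecomposable, completing the proof.

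The main obstacle I anticipate is the bookkeeping around converting the $SL_{n+1}$-weight $(p-1)\rho + \la$ into a genuine partition to which Proposition~\ref{P:mulllength} applies — specifically arranging that the last part is strictly positive (hypothesis (a)) while the coefficient-differences remain $\geq p-1$ (hypothesis (b)), and checking that adding an appropriate multiple of $(1,1,\dots,1)$ does not disturb (b) and does not change the $SL$-module. A secondary subtlety is making precise the bridge supplied by Proposition~\ref{P:SocInd}: one needs that for $\mu \in (p-1)\rho + X_1$, the tilting module $T(\mu)$ is $G_1T$-indecomposable precisely when (or at least: whenever) its $G_1$-socle is simple, and then that the $G_1$-socle being simple is equivalent to the $G$-socle being simple for such highest weights. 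Once those two structural inputs are in hand, the remainder is the routine partition computation sketched above, driven entirely by the hypothesis $\langle\la,\alpha_0^\vee\rangle \leq p-1$.
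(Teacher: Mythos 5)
Your overall architecture is the right one (Proposition~\ref{P:SocInd} to reduce to simplicity of the $G$-socle, then Proposition~\ref{P:mulllength} plus Corollary~\ref{C:socle} to get that simplicity), and it matches what the paper has in mind. But there is a genuine error at the step you yourself flagged as the ``main obstacle,'' and the fix you propose would actually break the argument rather than repair it.

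Concretely: writing $(p-1)\rho+\la = \sum_i \nu_i\omega_i$ with $\nu_i = p-1+\la_i$ and converting to a partition via Lemma~\ref{L:pres}, you get $\si$ with $\si_j-\si_{j+1}=\nu_{n+1-j} = p-1+\la_{n+1-j}\ge p-1$ for $j\le n$, and $\si_{n+1}=0$. The correct move here is to regard $\si$ as an $n$-part partition: then $\si_n = p-1+\la_1 >0$, so hypothesis (a) of Proposition~\ref{P:mulllength} already holds and no adjustment is needed. One then has $\si_1 = n(p-1) + t$ with $t=\langle\la,\alpha_0^\vee\rangle$, so $M_p(\si)'$ has $\lceil \si_1/(p-1)\rceil = n+\lceil t/(p-1)\rceil\le n+1$ parts precisely because $t\le p-1$, and Corollary~\ref{C:socle} applies over $S(n+1,d)$. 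Your proposal to instead add a positive scalar $c$ to all $n+1$ coordinates (to make $\si_{n+1}=c>0$) is what goes wrong: that bumps $\si_1$ to $n(p-1)+t+c$, and in the boundary case $t=p-1$ this gives $\lceil \si_1/(p-1)\rceil = n+2$, so $M_p$ of the shifted partition has $n+2$ parts and Corollary~\ref{C:socle} no longer applies over $S(n+1,\cdot)$. (The module is of course unchanged as an $SL_{n+1}$-module, but the hypothesis of the corollary becomes false, and one would be stuck.) So the scalar trick is both unnecessary and, in exactly the extreme case permitted by the hypothesis, fatal.

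Two smaller points. First, the discussion of $p$-restrictedness of $(p-1)\rho+\la$ is off: $(p-1)\rho+\la$ has coefficients $p-1+\la_i$ and is $p$-restricted only when $\la=0$; the claim that $\langle\la,\alpha_0^\vee\rangle\le p-1$ forces ``at most one $\la_i$ nonzero'' is also false. None of this matters, because the argument never needs $p$-restrictedness of $(p-1)\rho+\la$; what it needs is that $(p-1)\rho+\la = \hat\mu = 2(p-1)\rho+w_0\mu$ for $\mu=(p-1)\rho+w_0\la\in X_1$, so Proposition~\ref{P:SocInd} applies, and that $\si$ is $p$-regular, which is immediate since $\si$ has strictly decreasing parts. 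Second, Proposition~\ref{P:SocInd} is stated in terms of the $G$-socle, not the $G_1$-socle; there is no need for the $G_1$- vs.\ $G$-socle comparison you gesture at, and the cleanest statement of the reduction is simply: simple $G$-socle of $T(\hat\mu)$ implies $T(\hat\mu)|_{G_1T}\cong\widehat Q_1(\mu)$, which is indecomposable.
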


\subsection{The Tilting Module Conjecture}\label{S:TMC}  In the general setting of a connected reductive group $G$, Donkin's Tilting Module Conjecture states that for $\la \in X_r$, the module $\widehat{Q}_r(\la)$ lifts to an indecomposable tilting $G$-module, specifically $T(\hat{\lambda})$, where $\hat{\lambda} := 2(p^r - 1)\rho + w_0\la$.  Recall also that, if the conjecture holds for $r = 1$, then it hold for all $r \geq 1$.   The goal of this section is to show that for $SL_n$ the validity of the TMC is equivalent to understanding the composition multiplicities of the image of a simple module under the inverse lowering functor $\gmn$. See Theorem \ref{T:DequivG}.   

We first observe that simplicity of the $G$-socle of $T(\hat{\la})$ gives a condition for the indecomposability of $T(\hat{\la})$ over $G_rT$, noting this is valid quite generally, i.e., not just for $SL_n$.  

\begin{prop}\label{P:SocInd} Let $G$ be a connected reductive algebraic group and $\la \in X_r$.   The $G$-socle of $T(\hat{\la})$ is simple if and only if
$$
T(\hat{\la})|_{G_rT} \cong \widehat{Q}_r(\la).
$$
\end{prop}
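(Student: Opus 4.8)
The plan is to exploit the general structure theory relating tilting $G$-modules and $G_rT$-modules, specifically the identification of $T(\hat\la)$ as a $G_rT$-summand of $\widehat Q_r(\la)$. Recall (see \cite[II Chapter 11, E.9]{rags} and the discussion in \cite{BNPS23}) that for $\la \in X_r$ the $G_rT$-module $\widehat Q_r(\la)$ is injective (equivalently projective) as a $G_rT$-module with simple socle $\widehat L_r(\la)$, and that it is always a direct summand of $T(\hat\la)$ restricted to $G_rT$. The TMC is precisely the assertion that this containment is an equality; our task is to show that equality is detected by simplicity of the \emph{$G$}-socle of $T(\hat\la)$.

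First I would establish the forward direction. Suppose $T(\hat\la)|_{G_rT} \cong \widehat Q_r(\la)$. Since $\widehat Q_r(\la)$ has simple $G_rT$-socle $\widehat L_r(\la)$, and any nonzero $G$-submodule of $T(\hat\la)$ restricts to a nonzero $G_rT$-submodule (hence meets the $G_rT$-socle), the $G$-socle of $T(\hat\la)$ must be an irreducible $G$-module $L(\mu)$ whose $G_rT$-socle contains $\widehat L_r(\la)$; in particular the $G$-socle is simple. Conversely, suppose $\operatorname{soc}_G T(\hat\la)$ is simple. Write $T(\hat\la)|_{G_rT} \cong \widehat Q_r(\la) \oplus M$ for some $G_rT$-module $M$; I want $M = 0$. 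If $M \neq 0$, then $\operatorname{soc}_{G_rT} M \neq 0$, so $\operatorname{soc}_{G_rT} T(\hat\la)$ strictly contains $\widehat L_r(\la)$. The key point is to derive from this a nonsimple $G$-socle: because $T(\hat\la)$ is a tilting module of highest weight $\hat\la = 2(p^r-1)\rho + w_0\la$, duality ($T(\hat\la) \cong {}^{\tau}T(\hat\la)$) forces $\operatorname{soc}_G T(\hat\la)$ and $\operatorname{head}_G T(\hat\la)$ to be dual, and one knows $\operatorname{head}_G T(\hat\la)$ contains $L(\hat\la)$. Combining the $G$-socle filtration with the extra $G_rT$-socle constituents coming from $M$, I would argue that a second $G$-composition factor in the socle layer must appear, contradicting simplicity. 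More robustly, I would use that $\widehat Q_r(\la)$ is a $G_rT$-direct summand, so $\operatorname{Hom}_{G_rT}(\widehat L_r(\la), M) = 0$ while $\operatorname{Hom}_{G_rT}(\widehat L_r(\la), T(\hat\la)) = \operatorname{Hom}_{G_rT}(\widehat L_r(\la), \widehat Q_r(\la))$ is one-dimensional; then translate a nonzero $G_rT$-socle element of $M$ up to a $G$-submodule via the fact that $G/G_rT$-induction of the socle inclusion produces $G$-submodules, yielding a second simple $G$-summand of the socle distinct from the one detecting $\widehat L_r(\la)$.

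The main obstacle is the converse direction: passing from ``extra $G_rT$-socle'' to ``nonsimple $G$-socle'' is not purely formal, since a priori several $G_rT$-simples could be packaged inside a single $G$-simple $L(\mu)$. The clean way around this is the standard fact (again \cite[II Chapter 11]{rags}, cf. \cite{BNPS23}) that $\widehat L_r(\la)$ occurs in $\operatorname{soc}_{G_rT} T(\hat\la)$ with multiplicity exactly one and that this socle constituent ``belongs to'' the $\widehat Q_r(\la)$ summand; hence any further $G_rT$-socle constituent lies in $M$ and its weight is \emph{not} in $\la + p^r X$, so it cannot lie inside the same $G$-simple as the copy of $L(\la)$-type socle — forcing at least two distinct $G$-composition factors in $\operatorname{soc}_G T(\hat\la)$. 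I would also note that the equivalence is essentially an unwinding of \cite[E.9 Lemma]{rags} together with the summand property, so the proof should be short once these standard inputs are invoked; the only care needed is to phrase everything over $G_rT$ rather than $G_r$, which is why the statement uses $T(\hat\la)|_{G_rT} \cong \widehat Q_r(\la)$ rather than a $G_r$-level statement.
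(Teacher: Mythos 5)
Your proposal takes the same basic route as the paper's proof, and the forward direction is essentially correct (the key point being that $\soc_{G_rT}T(\hat\la)=\widehat L_r(\la)$ is simple, so two distinct summands in $\soc_G T(\hat\la)$ would each have to contribute disjoint nonzero $G_rT$-submodules meeting that simple socle). But the converse direction as written is muddier than it needs to be, and a couple of your proposed routes are dead ends. The appeal to $\tau$-self-duality and $\operatorname{head}_G T(\hat\la)\supseteq L(\hat\la)$ doesn't obviously interact with $M\neq 0$; you would discard it anyway. The crux you identify — that an extra $G_rT$-socle constituent in $M$ must produce a second $G$-simple in $\soc_G T(\hat\la)$ — is exactly right, but your phrase ``translate a nonzero $G_rT$-socle element of $M$ up to a $G$-submodule via $G/G_rT$-induction'' is where the real argument needs to happen and is left vague. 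The paper makes this step precise by invoking \cite[Prop.\ 4.1.3]{So18}, which gives the full $G_rT$-decomposition $T(\hat\la)\cong\widehat Q_r(\la)\oplus\bigoplus_{\mu\ne\la}\widehat Q_r(\mu)\otimes\Hom_{G_r}(L(\mu),T(\hat\la))$, and then uses the normality of $G_r$ in $G$ to see that each $\Hom_{G_r}(L(\mu),T(\hat\la))$ carries a $G$-module structure (pulled back via $F^r$). Hence $M\neq 0$ is equivalent to some $\Hom_{G_r}(L(\mu),T(\hat\la))\neq 0$ with $\mu\neq\la$, which in turn yields a $G$-submodule $L(\mu)\otimes L(\mu')^{(r)}\cong L(\mu+p^r\mu')$ of $T(\hat\la)$ with $\mu+p^r\mu'\neq\la$ (since $\mu,\la\in X_r$ and $\mu\neq\la$). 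Together with the first summand giving $L(\la)\hookrightarrow T(\hat\la)$, this directly shows the $G$-socle is not simple. In short: your outline is correct in spirit and your ``more robust'' approach is the right one, but you should drop the duality/Xu-type side-arguments and replace the hand-waving about lifting socle elements with the explicit observation that $\Hom_{G_r}(L(\mu),T(\hat\la))$ is a $G$-module by normality of $G_r$ — this is the step that does all the work, and it is exactly what the paper's proof supplies.
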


\begin{proof}  It follows from \cite[Proposition 4.1.3]{So18} that there is a decomposition of $G_rT$-modules
$$T(\hat{\lambda}) \cong \widehat{Q}_r(\lambda) \oplus \bigoplus_{\mu \in X_r(T), \mu \ne \lambda} \widehat{Q}_r(\mu) \otimes \Hom_{G_r}(L(\mu),T(\hat{\lambda})).$$
Since $G_r$ is normal in $G$ and $T(\hat{\lambda})$ is a $G$-module, each $\Hom_{G_r}(L(\mu),T(\hat{\lambda}))$
is a $G$-submodule of $T(\hat{\lambda})$.  Furthermore,
$$\Hom_{G_r}(L(\mu),T(\hat{\lambda})) \ne \{0\}$$
if and only if there is a $G$-submodule of $T(\hat{\lambda})$ of the form $L(\mu) \otimes L(\mu^{\prime})^{(r)}$ for some dominant weight $\mu^{\prime}$.  On the other hand, the decomposition above shows that $L(\lambda)$ is a $G$-submodule of $T(\hat{\lambda})$.  It follows that $T(\hat{\la})$ is indecomposable over $G_rT$ if and only if it has a simple $G$-socle.


\end{proof} 

Note that the nature of $\hat{\la}$ is important in this theorem. As seen in Section \ref{SS:NonIndEx}, we may have a module $T(\mu)$ with simple $G$-socle that decomposes upon restriction to $G_rT$.
Combining this result with Theorem \ref{T:socle2} gives a criterion for indecomposability of the restriction of $T(\hat{\la})$ to $G_1T$.

\begin{theorem}\label{T:DequivG} Assume $G = SL_n$.   Let $\la \in X_1$ with $\hat{\la} \in \Lambda^+(n,d)$ (when considered as a partition).  Let $m$ denote the number of parts in $M_p(\hat{\la})'$. Then $T(\hat{\la})$ is indecomposable over $G_1T$ if and only if 
$$
\left[\gmn(L(\si)) : \bar{L}(M_p(\hat{\la})')\right] = 
\begin{cases}
0 \text{ if } \si \neq \la,\\
1 \text{ if } \si = \la,
\end{cases}
$$
for all $\si \in \Lambda^+(n,d)$.
\end{theorem}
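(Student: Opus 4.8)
The plan is to combine Proposition~\ref{P:SocInd} with Theorem~\ref{T:socle2}, so that the statement becomes almost a direct translation between two equivalent conditions. By Proposition~\ref{P:SocInd} (applied with $r=1$), the module $T(\hat{\la})$ is indecomposable over $G_1T$ if and only if the $G$-socle of $T(\hat{\la})$ is simple. Since $G = SL_n$ and $\hat{\la}$, viewed as a partition, lies in $\Lambda^+(n,d)$ for some $d$, questions about the $G$-socle of $T(\hat{\la})$ translate directly into questions about the $S(n,d)$-socle of the tilting module $T(\hat{\la})$ over the Schur algebra. (Here one must be a little careful: passing from $SL_n$ to $GL_n$ and fixing a degree $d$ is the standard dictionary, and composition factors/socle constituents correspond.)

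Next I would invoke Lemma~\ref{L:preg}: since $\la \in X_1$, the partition $\hat{\la}$ is strictly decreasing, hence $p$-regular, so Theorem~\ref{T:socle2} applies to $\mu := \hat{\la}$. That theorem says that, with $m$ the number of parts of $M_p(\hat{\la})'$, the multiplicity of $L(\si)$ in the $S(n,d)$-socle of $T(\hat{\la})$ equals $\bigl[\gmn(L(\si)) : \bar{L}(M_p(\hat{\la})')\bigr]$ for each $\si \in \Lambda^+(n,d)$. Therefore the $S(n,d)$-socle of $T(\hat{\la})$ is simple if and only if these multiplicities vanish for all $\si \neq$ some fixed weight and equal $1$ for that weight.

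To pin down which weight, I would use the decomposition of $T(\hat{\la})$ as a $G_1T$-module recorded in the proof of Proposition~\ref{P:SocInd}: it always contains $L(\la)$ as a $G$-submodule (the $\widehat{Q}_1(\la)$ summand), so $L(\la)$ always occurs in the $G$-socle. Equivalently, on the Schur-algebra side, $\bigl[\gmn(L(\la)) : \bar{L}(M_p(\hat{\la})')\bigr] \geq 1$ always. (One can double-check this against Corollary~\ref{C:fnD} and the observation that $\bar{L}(M_p(\hat{\la})')$ really is the relevant constituent.) Consequently, simplicity of the socle forces the distinguished weight to be $\si = \la$ with multiplicity exactly $1$, and all other multiplicities to be $0$; conversely, if the displayed multiplicity condition holds, the socle is $L(\la)$, hence simple. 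Unwinding Proposition~\ref{P:SocInd} then yields the stated equivalence.

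The main obstacle I anticipate is the bookkeeping in the $SL_n$-versus-$GL_n$/$S(n,d)$ translation, together with verifying that $L(\la)$ (i.e.\ $\bar{L}(M_p(\hat{\la})')$ appearing in $\gmn(L(\la))$) is genuinely present in the socle so that $\si = \la$ is the forced weight: this is where one must carefully match $\widehat{Q}_1(\la)$ on the $G_1T$ side with the socle constituent on the Schur-algebra side. Once that identification is clean, the rest is a formal concatenation of Proposition~\ref{P:SocInd}, Lemma~\ref{L:preg}, and Theorem~\ref{T:socle2}.
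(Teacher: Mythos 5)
Your proposal is correct and follows essentially the same route as the paper: translate the $G_1T$-indecomposability to simplicity of the $G$-socle via Proposition~\ref{P:SocInd}, pass to the $S(n,d)$-socle, invoke Lemma~\ref{L:preg} for $p$-regularity of $\hat{\la}$, and apply Theorem~\ref{T:socle2} with $\mu = \hat{\la}$. You are in fact slightly more explicit than the paper on one point the paper leaves implicit, namely why the distinguished weight in the simple-socle case must be $\si = \la$; your appeal to the $\widehat{Q}_1(\la)$ summand in the $G_1T$-decomposition from the proof of Proposition~\ref{P:SocInd} (which shows $L(\la)$ is always a $G$-submodule, hence $\left[\gmn(L(\la)) : \bar{L}(M_p(\hat{\la})')\right] \geq 1$) is exactly the right justification.
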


\begin{proof} From Proposition \ref{P:SocInd}, the indecomposability of $T(\hat{\la})$ over $G_1T$ is equivalent to the simplicity of the $G$-socle of $T(\hat{\la})$ or equivalently to the simplicity of the $S(n,d)$-socle of $T(\hat{\la})$.   From Lemmas \ref{L:pres} and \ref{L:preg}, the weight $\hat{\la}$ (considered as a partition) is $p$-regular.  Thus one may apply Theorem \ref{T:socle2} with $\mu = \hat{\la}$ to get the claim.  
\end{proof}

It follows from Proposition~\ref{P:mulllength} that, for $G = SL_n$ and $\la \in X_1(T),$ the length of the partition $M_p(\hat{\la})'$ is at most $2n.$ Moreover, length $2n$ happens only when $\la =0,$ where $M_p(\hat{\la})'= (n(p-1), n(p-1), (n-1)(p-1), (n-1)(p-1), ...  , p-1, p-1)$.

Theorem~\ref{T:DequivG} along with Proposition~\ref{P:SocInd} yields the following result that demonstrates that verifying the TMC is equivalent to determining composition factor multiplicities. 

\begin{cor}\label{C:TMCequivG} Assume $G = SL_n$.  The following are equivalent. 
\begin{itemize} 
\item[(a)] Donkin's Tilting Module Conjecture
\item[(b)] For all $\la \in X_1$ with $\hat{\la} \in \Lambda^+(n,d)$ (when considered as a partition), with $m$ equal to the number of parts in $M_p(\hat{\la})'$, 
$$
\left[\gmn(L(\si)) : \bar{L}(M_p(\hat{\la})')\right] = 
\begin{cases}
0 \text{ if } \si \neq \la,\\
1 \text{ if } \si = \la,
\end{cases}
$$
for all $\si \in \Lambda^+(n,d)$.
\end{itemize} 
\end{cor}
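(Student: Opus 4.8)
The plan is to derive Corollary~\ref{C:TMCequivG} as a near-formal consequence of the machinery already assembled. The key reduction is that Donkin's TMC for a connected reductive group is known to be equivalent to its $r = 1$ case, so it suffices to work with $G_1T$. For $G = SL_n$ this says: the TMC holds if and only if $T(\hat{\la})|_{G_1T} \cong \widehat{Q}_1(\la)$ for every $\la \in X_1$. Thus the main task is to show that this family of $G_1T$-isomorphisms, ranging over all $\la \in X_1$, is equivalent to statement (b).

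The argument I would give is essentially a quantifier-by-quantifier translation. First I would fix $\la \in X_1$ and invoke Proposition~\ref{P:SocInd}: the condition $T(\hat{\la})|_{G_1T} \cong \widehat{Q}_1(\la)$ is equivalent to the $G$-socle of $T(\hat{\la})$ being simple. Here one must note that $\hat{\la}$, considered as a partition, lies in $\Lambda^+(n,d)$ for an appropriate $d$ (namely $d = \langle 2(p-1)\rho + w_0\la, \text{sum of coordinates}\rangle$ in partition language), and that the $G = SL_n$-socle of $T(\hat{\la})$ coincides with its $S(n,d)$-socle — this is the standard equivalence between polynomial $SL_n$-modules of a fixed degree and $S(n,d)$-modules, already used throughout Section~4. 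Next, by Lemmas~\ref{L:pres} and~\ref{L:preg}, $\hat{\la}$ is $p$-regular (in fact strictly decreasing), so Theorem~\ref{T:socle2} applies with $\mu = \hat{\la}$: it expresses the multiplicity of each $L(\si)$ in the $S(n,d)$-socle of $T(\hat{\la})$ as $[\gmn(L(\si)) : \bar{L}(M_p(\hat{\la})')]$, where $m$ is the number of parts of $M_p(\hat{\la})'$. Simplicity of this socle then says precisely that this multiplicity is $1$ when $\si = \la$ and $0$ otherwise — and one must check that $\si = \la$ really is the weight that can occur with multiplicity $1$: indeed $L(\la)$ is always a $G$-submodule of $T(\hat{\la})$ by the decomposition in the proof of Proposition~\ref{P:SocInd} (the $\mu = \la$ term), so the socle is nonzero at $\la$, and simplicity forces the stated dichotomy. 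This is exactly the content of Theorem~\ref{T:DequivG}. Finally I would range over all $\la \in X_1$ and note that (b) is the conjunction of these conditions, while (a) — via the reduction to $r = 1$ and Proposition~\ref{P:SocInd} — is the conjunction of the corresponding socle-simplicity statements. Hence (a) $\Leftrightarrow$ (b).

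In practice the corollary is almost immediate once Theorem~\ref{T:DequivG} is in hand: one simply quantifies over $\la$ and cites the $r = 1$ reduction for the TMC. The one point deserving care — and the step I expect to be the main obstacle in writing it cleanly — is the passage between "TMC for $SL_n$" as usually stated (for all $r$, the $G_rT$-module $\widehat{Q}_r(\la)$ lifts to $T(\hat{\la})$) and the single family of $G_1T$-statements used here. This requires citing that the TMC reduces to $r = 1$ (mentioned in Section~\ref{S:TMC}) and then observing, as in Proposition~\ref{P:SocInd}, that "$\widehat{Q}_1(\la)$ lifts to $T(\hat{\la})$" is the same as "$T(\hat{\la})|_{G_1T} \cong \widehat{Q}_1(\la)$." One should also remark that every $\la \in X_1$ indeed produces a partition $\hat{\la} \in \Lambda^+(n,d)$ for some $d$, so the hypothesis in (b) is not vacuous and covers all relevant weights. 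Beyond that bookkeeping, no new computation is needed: the proof is a one-line application of Theorem~\ref{T:DequivG} together with Proposition~\ref{P:SocInd} and the $r = 1$ reduction.

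\begin{proof} By Proposition~\ref{P:SocInd} and the fact that the TMC reduces to the case $r = 1$ (see Section~\ref{S:TMC}), Donkin's Tilting Module Conjecture for $G = SL_n$ is equivalent to the assertion that, for every $\la \in X_1$, the $G$-socle of $T(\hat{\la})$ is simple. For each such $\la$, the weight $\hat{\la}$, considered as a partition, lies in $\Lambda^+(n,d)$ for a suitable $d$, and the $SL_n$-socle of $T(\hat{\la})$ agrees with its $S(n,d)$-socle. By Theorem~\ref{T:DequivG}, simplicity of that socle is equivalent to
$$
\left[\gmn(L(\si)) : \bar{L}(M_p(\hat{\la})')\right] =
\begin{cases}
0 & \text{if } \si \neq \la,\\
1 & \text{if } \si = \la,
\end{cases}
$$
for all $\si \in \Lambda^+(n,d)$, where $m$ is the number of parts of $M_p(\hat{\la})'$. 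Quantifying over all $\la \in X_1$, the conjunction of these conditions is precisely statement (b), while the conjunction of the socle-simplicity statements is (a). Hence (a) and (b) are equivalent.
\end{proof}
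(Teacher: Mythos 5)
Your proof is correct and takes essentially the same route as the paper, which gives no separate argument for the corollary but simply states that it follows from Theorem~\ref{T:DequivG} and Proposition~\ref{P:SocInd}; your write-up is a faithful elaboration of exactly that deduction (quantify Theorem~\ref{T:DequivG} over all $\la \in X_1$, use Proposition~\ref{P:SocInd} and the reduction to $r=1$).
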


To verify the TMC for $SL_n$, one would a priori need to apply Corollary~\ref{C:TMCequivG}(b) to each $\la \in X_1$.   However, one knows that the result must hold or not hold simultaneously for the dual weights $\la$ and $-w_0\la$.   So that eliminates a number of cases. Moreover, for a given weight $\la$, when attempting to verify Corollary~\ref{C:TMCequivG}, one only needs to consider $\si$ with $\si \leq \hat{\la}$.    This still leaves a large number of computations, however the case of $SL_4$ and $p = 3$ might be tractable with the aid of a computer to compute the modules $\gmn(L(\si))$ that need to be considered.


\appendix
\section{Induced modules}\label{S:appendix}

Let $p = 3$ and consider the Schur algebras $S(3,12)$ and $S(4,12)$ along with the functor $\mathcal{G}^4_3 : \text{Mod}(S(3,12)) \to \text{Mod}(S(4,12))$.  Presented here is the $S(4,12)$-submodule structure of the induced module $\bar{\nabla}(\si)$ for all $\si \in \Lambda^+(3,12)$, i.e., all 3-part partitions of 12. From this structure, an identification of $\mathcal{G}^4_3(L(\si))$ is made for each $\si$.  The computations are made (for $SL_4$) using the program GAP \cite{GAP} and S. Doty's Weyl module program \cite{Doty}.

\vskip.2cm\noindent
The structure of $\bar{\nabla}(12)$:
$$\bar{L}(8,2,2)$$
$$\bar{L}(8,4)$$
$$\bar{L}(11,1)\oplus \bar{L}(6,6)$$
$$\bar{L}(12).$$
Since the factors immediately above $\bar{L}(12)$ have only 2 parts ($\leq 3$), they (nor anything higher) cannot lie in $\mathcal{G}^4_3(L(12))$.    Hence, $\mathcal{G}(L(12)) = \bar{L}(12)$.

\vskip.2cm\noindent
The structure of $\bar{\nabla}(11,1)$:
$$\bar{L}(6,2,2,2)$$
$$\bar{L}(9,2,1)$$
$$\bar{L}(9,3)\oplus\bar{L}(8,2,2)\oplus\bar{L}(7,4,1)$$
$$\bar{L}(10,1,1)\oplus\bar{L}(8,4)$$
$$\bar{L}(11,1)$$
Hence, $\mathcal{G}^4_3(L(11,1)) = \bar{L}(11,1)$.

\vskip.2cm\noindent
The structure of $\bar{\nabla}(10,2)$:
$$\bar{L}(7,2,2,1)$$
$$\bar{L}(10,2).$$
Here, $(7,2,2,1)$ has more than 3 parts. Hence, $\mathcal{G}^4_3(L(10,2)) = \bar{\nabla}(10,2)$.

\vskip.2cm\noindent
The structure of $\bar{\nabla}(10,1,1)$:
$$\bar{L}(6,3,2,1)$$
$$\bar{L}(9,1,1,1)\oplus\bar{L}(6,3,3)\oplus\bar{L}(6,2,2,2)$$
$$\bar{L}(9,2,1)$$
$$\bar{L}(7,4,1)$$
$$\bar{L}(10,1,1)$$
Hence, $\mathcal{G}^4_3(L(10,1,1)) = \bar{L}(10,1,1)$.

\vskip.2cm\noindent
The structure of $\bar{\nabla}(9,3)$:
$$\bar{L}(7,3,2)$$
$$\bar{L}(8,2,2)\oplus\bar{L}(6,3,3)\oplus\bar{L}(5,5,2)\oplus\bar{L}(6,2,2,2)\oplus\bar{L}(4,4,4)$$
$$\bar{L}(7,4,1)\oplus\bar{L}(5,4,3)$$
$$\bar{L}(8,4)\oplus\bar{L}(9,2,1)$$
$$\bar{L}(9,3)$$
Hence, $\mathcal{G}^4_3(L(9,3)) = \bar{L}(9,3)$.

\vskip.2cm\noindent
The structure of $\bar{\nabla}(9,2,1)$:
$$\bar{L}(3,3,3,3)$$
$$\bar{L}(7,3,2)\oplus\bar{L}(6,3,2,1)$$
$$\bar{L}(8,2,2)\oplus\bar{L}(9,1,1,1)\oplus\bar{L}(6,3,3)\oplus\bar{L}(6,2,2,2)\oplus\bar{L}(4,4,4)$$
$$\bar{L}(7,4,1)$$
$$\bar{L}(9,2,1)$$
Hence, $\mathcal{G}^4_3(L(9,2,1)) = \bar{L}(9,2,1)$.

\vskip.2cm\noindent
The structure of $\bar{\nabla}(8,4)$:
$$\bar{L}(4,4,2,2)$$
$$\bar{L}(6,2,2,2)\oplus\bar{L}(4,4,4)\oplus\bar{L}(3,3,3,3)$$
$$\bar{L}(6,5,1)\oplus\bar{L}(7,3,2)$$
$$\bar{L}(8,2,2)\oplus\bar{L}(5,5,2)\oplus\bar{L}(6,3,3)\oplus\bar{L}(4,4,4)\oplus\bar{L}(3,3,3,3)$$
$$\bar{L}(7,4,1)\oplus\bar{L}(6,6)\oplus\bar{L}(5,4,3)$$
$$\bar{L}(8,4)$$
Hence, $\mathcal{G}^4_3(L(8,4)) = \bar{L}(8,4)$.

\vskip.2cm\noindent
The structure of $\bar{\nabla}(8,3,1)$:
$$\bar{L}(5,3,2,2)$$
$$\bar{L}(8,3,1)$$
Hence, $\mathcal{G}^4_3(L(8,3,1)) = \bar{\nabla}(8,3,1)$.

\vskip.2cm\noindent
The structure of $\bar{\nabla}(8,2,2)$:
$$\bar{L}(4,4,2,2)$$
$$\bar{L}(6,2,2,2)\oplus\bar{L}(4,4,4)\oplus\bar{L}(3,3,3,3)$$
$$\bar{L}(7,3,2)$$
$$\bar{L}(8,2,2).$$
Hence, $\mathcal{G}^4_3(L(8,2,2)) = \bar{L}(8,2,2)$.

\vskip.2cm\noindent
The structure of $\bar{\nabla}(7,5)$:
$$\bar{L}(4,4,3,1)$$
$$\bar{L}(7,2,2,1)$$
$$\bar{L}(7,5).$$
Hence, $\mathcal{G}^4_3(L(7,5)) = \bar{\nabla}(7,5)$.

\vskip.2cm\noindent
The structure of $\bar{\nabla}(7,4,1)$:
$$\bar{L}(5,4,2,1)$$
$$\bar{L}(6,3,2,1)\oplus\bar{L}(5,4,3)\oplus\bar{L}(4,4,2,2)$$
$$\bar{L}(6,4,1,1)\oplus\bar{L}(5,5,2)\oplus\bar{L}(6,3,3)\oplus\bar{L}(6,2,2,2)\oplus\bar{L}(3,3,3,3)\oplus\bar{L}(3,3,3,3)$$
$$\bar{L}(6,5,1)\oplus\bar{L}(7,3,2)$$
$$\bar{L}(4,4,4)$$
$$\bar{L}(7,4,1)$$
Hence, $\mathcal{G}^4_3(L(7,4,1)) = \bar{L}(7,4,1)$.

\vskip.2cm\noindent
The structure of $\bar{\nabla}(7,3,2)$:
$$\bar{L}(5,4,2,1)$$
$$\bar{L}(6,3,2,1)\oplus\bar{L}(5,4,3)\oplus\bar{L}(4,4,2,2)$$
$$\bar{L}(6,3,3)\oplus\bar{L}(5,5,2)\oplus\bar{L}(6,2,2,2)\oplus\bar{L}(4,4,4)\oplus\bar{L}(3,3,3,3)$$
$$\bar{L}(7,3,2)$$
Hence, the structure of $\mathcal{G}^4_3(L(7,3,2))$ is
$$\bar{L}(6,2,2,2)\oplus\bar{L}(3,3,3,3)$$
$$\bar{L}(7,3,2).$$

\vskip.2cm\noindent
The structure of $\bar{\nabla}(6,6)$:
$$\bar{L}(4,4,2,2)$$
$$\bar{L}(4,4,4)\oplus\bar{L}(6,2,2,2)\oplus\bar{L}(3,3,3,3)$$
$$\bar{L}(6,5,1)$$
$$\bar{L}(3,3,3,3)$$
$$\bar{L}(6,6)$$
Hence, the structure of $\mathcal{G}^4_3(L(6,6))$ is
$$\bar{L}(3,3,3,3)$$
$$\bar{L}(6,6).$$

\vskip.2cm\noindent
The structure of $\bar{\nabla}(6,5,1)$:
$$\bar{L}(5,4,2,1)$$
$$\bar{L}(5,4,3)\oplus\bar{L}(6,3,2,1)\oplus\bar{L}(4,4,2,2)$$
$$\bar{L}(5,5,2)\oplus\bar{L}(6,4,1,1)\oplus\bar{L}(6,3,3)\oplus\bar{L}(4,4,4)\oplus\bar{L}(6,2,2,2)\oplus\bar{L}(3,3,3,3)$$
$$\bar{L}(6,5,1)$$
Hence, the structure of $\mathcal{G}^4_3(L(6,5,1))$ is
$$\bar{L}(6,4,1,1)\oplus\bar{L}(6,2,2,2)\oplus\bar{L}(3,3,3,3)$$
$$\bar{L}(6,5,1)$$

\vskip.2cm\noindent
The structure of $\bar{\nabla}(6,4,2)$:
$$\bar{L}(6,4,2)$$
Hence, $\mathcal{G}^4_3(L(6,4,2)) = \bar{L}(6,4,2) = \bar{\nabla}(6,4,2)$.

\vskip.2cm\noindent
The structure of $\bar{\nabla}(6,3,3)$:
$$\bar{L}((5,4,2,1))\oplus\bar{L}(3,3,3,3)$$
$$\bar{L}((5,4,3))\oplus\bar{L}(6,3,2,1)$$
$$\bar{L}(6,3,3).$$
Hence, the structure of $\mathcal{G}^4_3(L(6,3,3))$ is
$$\bar{L}(6,3,2,1)$$
$$\bar{L}(6,3,3).$$

\begin{remark} The case of $\bar{\nabla}(6,3,3)$ is an interesting example where the head of the induced module (equivalently, socle of the Weyl module) is not simple.  Working over $SL_4$, the socle of the Weyl module $\Delta(3\omega_1 + 3\omega_3)$ is $L(\omega_1 + 2\omega_2 + \omega_3)\oplus k$.
\end{remark}

\vskip.2cm\noindent
The structure of $\bar{\nabla}(5,5,2)$:
$$\bar{L}(5,4,2,1)$$
$$\bar{L}(5,4,3)$$
$$\bar{L}(5,5,2)$$
Hence, $\mathcal{G}^4_3(L(5,5,2)) = \bar{L}(5,5,2)$.

\vskip.2cm\noindent
The structure of $\bar{\nabla}(5,4,3)$:
$$\bar{L}(4,4,2,2)$$
$$\bar{L}(5,4,2,1)\oplus\bar{L}(4,4,4)\oplus\bar{L}(3,3,3,3)$$
$$\bar{L}(5,4,3).$$
Hence, the structure of $\mathcal{G}^4_3(L(5,4,3))$ is
$$\bar{L}(5,4,2,1))\oplus\bar{L}(3,3,3,3)$$
$$\bar{L}(5,4,3).$$

\vskip.2cm\noindent
The structure of $\bar{\nabla}(4,4,4)$:
$$\bar{L}(4,4,2,2)$$
$$\bar{L}(4,4,4).$$
Hence, $\mathcal{G}^4_3(L(4,4,4)) = \bar{\nabla}(4,4,4)$.


\providecommand{\bysame}{\leavevmode\hbox
to3em{\hrulefill}\thinspace}

\end{document}